\renewcommand\theequation{\thesection.\arabic{equation}}
\theoremstyle{break}
\newtheorem{thm}{Theorem}[section]
\newtheorem{cor}[thm]{Corollary}
\newtheorem{prop}[thm]{Proposition}
\newtheorem{lemma}[thm]{Lemma}
\newtheorem{rk}[thm]{Remark}
\newtheorem*{Borel's Lemma}{Borel's Lemma}
\newtheorem*{thm*}{Theorem}
\newtheorem{defn}[thm]{Definition}
\newenvironment{proof-idea}{\noindent{\bf Proof Idea}\hspace*{1em}}{\qed\bigskip\\}
\newenvironment{proof-of-lemma}[1]{\noindent{\bf Proof of Lemma #1}\hspace*{1em}}{\qed\bigskip\\}
\newenvironment{proof-attempt}{\noindent{\bf Proof Attempt}\hspace*{1em}}{\qed\bigskip\\}
\newcommand{\sbst}{\subseteq}
\newcommand{\norm}[1]{\lVert#1\rVert}
\newcommand{\abs}[1]{\lvert#1\rvert}
\newcommand{\set}[2]{\{#1\,|\,#2\}}
\newcommand{\mtrtwo}[4]{\begin{pmatrix} #1 &#2 \\#3 &#4 \end{pmatrix}}
\newcommand{\mtrthr}[9]{\begin{pmatrix} #1 &#2 &#3 \\#4 &#5 &#6\\ #7 &#8 &#9 \end{pmatrix}}
\newcommand{\BA}{{\mathbb {A}}}
\newcommand{\BC}{{\mathbb {C}}}
\newcommand{\BR}{{\mathbb {R}}}
\newcommand{\BZ}{{\mathbb {Z}}}
\newcommand{\CZ}{{\mathcal {Z}}}
\newcommand{\RH}{{\mathrm {H}}}
\newcommand{\RI}{{\mathrm {I}}}
\newcommand{\RO}{{\mathrm {O}}}
\newcommand{\RU}{{\mathrm {U}}}
\newcommand{\fk}{\mathfrak{k}}
\newcommand{\ft}{\mathfrak{t}}
\newcommand{\Ad}{\mathrm{Ad}}
\newcommand{\even}{\mathrm{even}}
\newcommand{\re}{\mathrm{Re}}
\newcommand{\rt}{\mathrm{right}}
\newcommand{\lt}{\mathrm{left}}
\newcommand{\R}{\mathbb{R}}
\newcommand{\C}{\mathbb{C}}
\newcommand{\eps}{\epsilon}
\newcommand{\quo}{\backslash}
\renewcommand{\bar}{\overline}
\renewcommand{\tilde}{\widetilde}
\renewcommand{\hat}{\widehat}
\newcommand{\diag}{{\mathrm{diag}}}
\newcommand{\fin}{{\mathrm{fin}}}
\newcommand{\GL}{{\mathrm{GL}}}
\newcommand{\Hom}{{\mathrm{Hom}}}
\newcommand{\id}{{\mathrm{id}}}
\newcommand{\Ind}{{\mathrm{Ind}}}
\newcommand{\Lie}{{\mathrm{Lie}}}
\newcommand{\Mat}{{\mathrm{Mat}}}
\newcommand{\sgn}{{\mathrm{sgn}}}
\newcommand{\SO}{{\mathrm{SO}}}
\newcommand{\SU}{{\mathrm{SU}}}
\newcommand{\Sp}{{\mathrm{Sp}}}
\newcommand{\tr}{{\mathrm{Tr}}}
\def\bks{{\backslash}}
\newcommand{\Rmnum}[1]{\expandafter\@slowromancap\romannumeral #1@}
\begin{document}
\renewcommand{\theequation}{\arabic{equation}}
\numberwithin{equation}{section}

\title[Explicit Cohomological Test Vectors for $\GL_{2n}(\R)$]{Archimedean Non-vanishing, Cohomological Test Vectors, and Standard $L$-functions of $\GL_{2n}$: Real Case}

\author{Cheng Chen}
\address{School of Mathematics\\
University of Minnesota, USA}
\email{chen5968@umn.edu}

\author{Dihua Jiang}
\address{School of Mathematics\\
University of Minnesota, USA}
\email{dhjiang@math.umn.edu}

\author{Bingchen Lin}
\address{School of Mathematics\\
Sichuan University, China}
\email{87928335@qq.com}

\author{Fangyang Tian}
\address{School of Mathematics\\
National University of Singapore, Singapore}
\email{mattf@nus.edu.sg}

\subjclass[2010]{Primary 22E45; Secondary 11F67}

\date{\today}

\keywords{Linear Model, Shalika Model, Friedberg-Jacquet Integral, Archimedean Non-Vanishing, Cohomological Test Vector, Standard $L$-functions for General Linear Groups}

\thanks{The research of Jiang is supported in part by the NSF Grants DMS--1600685 and DMS--1901802; that of Lin is supported in part by the China Scholarship Council No.201706245006; and
that of Tian is is supported in part by AcRF Tier 1 grant R-146-000-277-114 of National University of Singapore.
}


\begin{abstract}
The standard $L$-functions of $\GL_{2n}$ expressed in terms of the Friedberg-Jacquet global zeta integrals have better structure for arithmetic applications, due to the relation of the linear periods with the
modular symbols. The most technical obstacles towards such arithmetic applications are (1) {\sl non-vanishing of modular symbols at infinity} and (2)
the {\sl existence or construction of uniform cohomological test vectors}. Problem (1) is also called the {\sl non-vanishing hypothesis at infinity}, which was proved by B. Sun in \cite[Theorem 5.1]{Sun},
by establishing the existence of certain cohomological test vectors.

In this paper, we explicitly construct an archimedean local integral that produces a new type of a twisted linear functional $\Lambda_{s,\chi}$, which, when evaluated with our explicitly constructed cohomological vector,
is equal to the local twisted standard $L$-function $L(s,\pi\otimes\chi)$ for all complex values $s$. With the relations between linear models and Shalika models, we establish (1) with an explicitly constructed cohomological vector using classical invariant theory,
and hence proves the non-vanishing results of Sun in \cite[Theorem 5.1]{Sun} via a completely different method.

\end{abstract}

\maketitle


\tableofcontents


\section{Introduction}\label{Section: Introduction}

Let $k$ be a number field, and $\BA$ be the ring of adeles of $k$.
Let $\pi$ be an irreducible cuspidal automorphic representation of the general linear group $\GL_{2n}(\BA)$. The standard $L$-function $L(s,\pi\otimes\chi)$ of $\pi$, twisted by an
idele-class character $\chi$ of $k^\times$, was first studied by R. Godement and H. Jacquet in 1972 (\cite{GJ72}), and then by the Rankin-Selberg convolution method of Jacquet, I. Piatetski-Shapiro and J. Shalika in 1983 (\cite{JPSS83}).
In 1993, S. Friedberg and Jacquet found in \cite{F-J} a new global zeta integral for $L(s,\pi\otimes\chi)$, assuming that $\pi$ has a non-zero Shalika period.

Let $\omega=\omega_\pi$ be the central character of $\pi$ and take an idele-class character $\eta$ such that $\eta^n\cdot\omega=1$. The global zeta integral
$\CZ(\varphi_\pi,\chi,\eta,s)$ of Friedberg-Jacquet is given by
\begin{equation}\label{gzi-FJ}
\int_{[\GL_n\times \GL_n]}\varphi_\pi(\begin{pmatrix}g_1&0\\0&g_2\end{pmatrix})|\frac{\det g_1}{\det g_2}|^{s-\frac{1}{2}}\chi(\frac{\det g_1}{\det g_2})
\eta(\det g_2)dg_1dg_2,
\end{equation}
where $[\GL_n\times \GL_n]:=Z_{2n}(\BA)(\GL_n(k)\times \GL_n(k))\bks (\GL_n(\BA)\times \GL_n(\BA))$, with $Z_{2n}$ the center of $\GL_{2n}$.
In \cite[Proposition 2.3]{F-J}, it is proved that $\CZ(\varphi_\pi,\chi,\eta,s)$ converges absolutely for all $s\in\BC$ and for $\re(s)$ sufficiently large, it is equal to the absolutely convergent integral
\begin{equation}\label{gzi-FJ-S}
\CZ(V_{\varphi_\pi},\chi,s)
:=
\int_{\GL_n(\BA)}V_{\varphi_\pi}\begin{pmatrix}g&0\\0&\RI_n\end{pmatrix}\chi(\det g)|\det g|^{s-\frac{1}{2}}dg
\end{equation}
where $V_{\varphi_\pi}$ is the global Shalika period of $\varphi_\pi$ that is defined as follows. Let $S$ be the Shalika subgroup of $\GL_{2n}$ consisting of matrices of the form
$$
s(x,g)=\begin{pmatrix}\RI_n&x\\0&\RI_n\end{pmatrix}\begin{pmatrix}g&0\\0&g\end{pmatrix}
$$
where $x\in\Mat_n$ and $g\in \GL_n$. Define $\theta_\eta(s(x,g)):=\eta(\det g)\psi^{-1}(\tr(x))$ with a non-trivial additive character $\psi$ of $k\bks \BA$.
The Shalika period of $\varphi_\pi$ is defined by
$$
V_{\varphi_\pi}(h)
:=
\int_{Z_{2n}(\BA)S(k)\bks S(\BA)}\varphi_\pi(s(x,g)h)\theta_\eta(s(x,g))ds.
$$
By the local uniqueness of the Shalika model (\cite{Nien}, \cite{A-G-J}, and \cite{Ch-Sun}), for the factorizable $\varphi_\pi=\otimes_v \varphi_v$, one has that $V_{\varphi_\pi}(h)=\prod_vV_{\varphi_v}(h_v)$ with $V_{\varphi_v}(h_v)$ being the local Shalika function associated to the local Shalika model at each place $v$, and
an euler product decomposition:
$$
\CZ(V_{\varphi_\pi},\chi,s)=\prod_v\CZ_v(V_{\varphi_v},\chi_v,s)
$$
where the local zeta integrals are defined by
\begin{equation}\label{lzi-FJ}
\CZ_v(V_{\varphi_v},\chi_v,s)
:=
\int_{\GL_n(k_v)}V_{\varphi_v}\begin{pmatrix}g&0\\0&\RI_n\end{pmatrix}\chi_v(\det g)|\det g|_v^{s-\frac{1}{2}}dg.
\end{equation}
Furthermore, it is proved that the local zeta integral $\CZ_v(V_{\varphi_v},\chi_v,s)$ is a holomorphic multiple of the local $L$-function
$L(s,\pi_v\otimes\chi_v)$ (\cite{F-J} and \cite{A-G-J}).
It is clear that the Friedberg-Jacquet global zeta integral for $L(s,\pi\otimes\chi)$ is another natural generalization of the classical Hecke-type global zeta integral of Jacquet-Langlands for
$\GL_2$ (\cite{JL70}).

Among the three constructions of different global zeta integrals for $L(s,\pi\otimes\chi)$, it seems that the Friedberg-Jacquet global zeta integral for
$L(s,\pi\otimes\chi)$ is better for applications with $\pi$ being cohomological, since the construction is closely related to the generalized modular symbols (\cite{AB89} and \cite{JST}).
We refer to \cite{JST} for more detailed discussions of the applications of the Friedberg-Jacquet integrals to the period relations of the critical values at different critical places for
the automorphic $L$-functions $L(s,\pi\otimes\chi)$.
For such important applications, it is technically very essential to establish the following properties related to
the Friedberg-Jacquet integral for $L(s,\pi\otimes\chi)$:
\begin{enumerate}
\item {\bf Non-vanishing Hypothesis}: The modular symbol at infinity is non-zero.
\item {\bf Uniform Cohomological Test Vector}: The archimedean local zeta integral $\CZ_v(V_{\varphi_v},\chi_v,s)$ admits a uniform cohomological test vector $\varphi_v$ in the sense that
$$\CZ_v(V_{\varphi_v},\chi_v,s) = L(s, \pi_v\otimes\chi_v)$$
holds as a meromorphic function of $s\in\BC$.
\end{enumerate}
For (1), B. Sun establishes in \cite[Theorem 5.5]{Sun} the non-vanishing hypothesis for real case by showing the existence of certain cohomological test vectors.
For (2), the best result to the date is Sun's existence of cohomological test vector in \cite[Theorem 5.1]{Sun}, which shows that for any irreducible essentially tempered cohomological Casselman-Wallach representation $\pi_{v}$ of $\GL_{2n}(\mathbb{R})$ and every $s\in\BC$, there exists a cohomological vector $\varphi_{v,s}$, depending on $s$, such that the normalized Friedberg-Jacquet integral
$$\frac{1}{L(s, \pi_v\otimes\chi_v)}\CZ_v(V_{\varphi_{v,s}},\chi_v,s) = 1.$$ As explained \cite{JST}, this is not enough to obtain the global period relation of the critical values of
the twisted standard $L$-functions $L(s,\pi\otimes\chi)$ at different critical places.

The objective of this paper is to develop a constructive approach towards Problems (1) and (2), which is complementary to the approach taken by Sun in \cite{Sun}. In this paper, we do the real case, and
leave the complex case to \cite{LT}, which is similar, but has extra complications.

In the process of our understanding of the archimedean local zeta integrals of Friedberg-Jacquet, we find a new type of local integrals, which  produce new linear functionals $\Lambda_{s,\chi}$
and give a different exppression of the linear models for $\GL_{2n}(\BR)$. The explicitly constructed cohomological test vector, which is the most technical work of this paper (Section \ref{Section: Cohomological vector realization}), turns out to be a uniform
cohomological test vector that relates $\Lambda_{s,\chi}$ and the local twisted standard $L$-function $L(s,\pi\otimes\chi)$. With the known relation between the linear models, the Shalika models and the
local zeta integrals of Friedberg-Jacquet, we deduce our main result (Theorem \ref{thm-main}), which recovers the non-vanishing result of Sun
in \cite[Theorem 5.1]{Sun}, with explicitly constructed cohomological test vectors. Meanwhile, Theorem \ref{thm-main} shows that our explicitly constructed cohomological test vectors give a solution to Problem (2)
on uniform cohomological test vector for the archimedean local zeta integral of Friedberg-Jacquet, up to an exponential type function in $s$. This exponential type function in $s$ will be removed with
full details, including the complex case, in \cite{JST}, which will lead to a complete proof of the global period relation of critical values of the twisted standard $L$-functions at
different critical places for irreducible, regular algebraic,
cuspidal automorphic representations of $\GL_{2n}$ of (generalized) symplectic type (\cite{JST}).

\subsection{Cohomological representations of $\GL_{2n}(\BR)$}\label{sec-CRGL2n}
Let $G=\GL_{2n}(\R)$, and $K = \RO_{2n}(\R)$ be the maximal compact subgroup of $G$. Let $B$ be the Borel subgroup of $G$ consisting of
all upper-triangular matrices in $G$. We fix the usual root system of $G$ so that $B$ contains all simple root vectors. Then the half sum of all positive roots, denoted by $\rho$, is
\begin{equation}\label{eq: half sum positive roots}
   \rho = (\frac{2n-1}{2}, \frac{2n-3}{2}, \cdots, \frac{3-2n}{2}, \frac{1-2n}{2}).
\end{equation}
It is clear that all standard parabolic subgroups of $G$ are in one-to-one correspondence with the ordered partition of $2n$. For instance, when $n=2$, we regard $4=1+3$ and $4=3+1$ as different partitions.
Accordingly, they correspond to standard parabolic subgroups of $\GL_4(\BR)$ whose Levi subgroups are $\GL_1(\BR)\times \GL_3(\BR)$ and $\GL_3(\BR)\times\GL_1(\BR)$ respectively.

Set
    \begin{equation*}
       H = \Big\{\mtrtwo{g_1}{}{}{g_2}\Big\lvert g_1,g_2\in \GL_n(\R) \Big\} \simeq \GL_n(\R) \times \GL_n(\R).
    \end{equation*}
    Denoted by $Z$ the center of $G$. Let $d$ be the dimension of the quotient space
    $\Lie(H)/\Lie((K\cap H)Z)$, where $\Lie(H)$ is the Lie algebra of $H$. Then
    \begin{equation}\label{Eq: Def of d}
       d =  n^2+n-1,
    \end{equation}
    which, as suggested by \cite{A-G1} and exhibited in \cite[Section 3.4]{G-R}, is the dimension of the modular symbol generated by the closed subgroup $H$. To fix notation, from now on, we will use capital letters
$G, H$ etc. for certain Lie groups, $G^0, H^0$ etc. for their identity components, German letters $\mathfrak{g}, \mathfrak{h}$ etc. for their Lie algebras, and $\mathfrak{g}^\C, \mathfrak{h}^\C$ for
the complexifications of the Lie algebras.

    Let $F_\nu$ be a highest weight representation of $\GL_{2n}(\C)$ with highest weight $\nu$, which can be written as a vector of the following type:
    \begin{equation}\label{eq: highest weight of f-d rep}
         \nu = (\nu_1,\nu_2,\cdots, \nu_{2n})\in \mathbb{Z}^{2n},
    \end{equation}
    with $\nu_1\geq\nu_2\geq\cdots\geq\nu_{2n}$. The non-vanishing hypothesis involved in the above applications suggests that in this paper we only need to consider the irreducible essentially tempered Casselman-Wallach representations $(\pi, V_\pi)$ of $\GL_{2n}(\R)$ with property that the total relative Lie algebra cohomology
    $$\RH^*(\mathfrak{g},K^0Z^0,\pi\otimes F_\nu^\vee)$$ is non-zero.
    By abuse of notation, we also use $\pi$ for its underlying $(\mathfrak{g}, K)$-module when no confusion arises.

    Now we recall the Langlands parameter for $\pi$. (For a general reference, see \cite[Section 3]{Clo}. Also see \cite[Section 3.1]{M} and \cite[Section 3.4]{G-R}.) If the cohomology group
    \begin{equation}\label{eq: cohomological group}\RH^j(\mathfrak{g},K^0Z^0,\pi\otimes F_\nu^\vee)\end{equation}
    is nonzero, then the highest weight $\nu$ satisfies the following purity condition:
    \begin{equation}\label{eq: purity condition}
         \nu_1+\nu_{2n} = \nu_2+\nu_{2n-1} = \cdots = \nu_{2n}+\nu_{1} := m\in \mathbb{Z}.
    \end{equation}
    The integer $m$ is exactly the integer $w$ in \cite[Section 3.1.4]{M} and \cite[Section 3.4]{G-R}. We save the notation $w$ for Weyl elements. Moreover, we must also have that
    \begin{equation*}
           n^2 \leq j \leq n^2+n-1.
    \end{equation*}
    One should observe that the top non-vanishing degree $n^2+n-1$ is exactly the dimension of the modular symbol $d$ defined in \eqref{Eq: Def of d}. This is the key numerical coincidence,
with which Sun is able to prove the non-vanishing of cohomological maps based on non-vanishing of one archimedean integral.

     Given a highest weight $\nu$ and an integer $m$ satisfying \eqref{eq: purity condition}, we define    \begin{equation}\label{eq: def of l}
       \vec{l} := 2\nu+2\rho-(m,m,\cdots,m),
    \end{equation}
    where $\rho$ is given in \eqref{eq: half sum positive roots}.
    If we write $\vec{l} = (l_1,l_2,\cdots,l_{2n})$, then
    \begin{equation}\label{eq: l_i eq}
       l_i = \nu_i-\nu_{2n+1-i}+(2n+1-2i) \quad\text{ for all }1\leq i\leq 2n.
    \end{equation}
    We note that all $l_i$ share the same parity, which is different from the parity of $m$. For each positive integer $k$,
    we write $D_{k}$ for the relative discrete series of $\GL_2(\R)$ with quadratic central character whose minimal $K$-type has highest weight $k+1$. Denote by $\eta_k$ the central character of $D_k$, which is given by
    \begin{equation}\label{eq: central char discrete series}
         \eta_k = \begin{cases} \text{id} &\text{ if $k$ is odd};\\
                                \text{sgn} &\text{ if $k$ is even}.
                   \end{cases}
    \end{equation}
    The cohomological representation $\pi$ must be isomorphic to the normalized parabolically induced representation
    \begin{equation}\label{Eq: pi parabolic induction parameter}
       \begin{aligned}
       \pi \simeq \Ind_{P}^{G} D_{l_1}\abs{\quad}^{\frac{m}{2}}\otimes D_{l_2}\abs{\quad}^{\frac{m}{2}} \otimes \cdots\otimes D_{l_n}\abs{\quad}^{\frac{m}{2}},
       \end{aligned}
    \end{equation}
    where $P$ is the standard parabolic subgroup of $G$ associated with the partition $[2^n]$. The central character $\omega_\pi$ of $\pi$ takes the form
    \begin{equation}\label{eq: central char}
       \omega_\pi(a \RI_{2n}) = \begin{cases} \abs{a}^{mn} \quad\text{ if $m$ is even};\\
                                       \abs{a}^{mn} (\text{sgn}(a))^n \quad\text{ if $m$ is odd}.\\
       \end{cases}
    \end{equation}
    We define a character $\omega$ of $\R^\times$ as follows:
    \begin{equation}\label{eq: omega char}
       \omega(a) = \begin{cases} \abs{a}^{m} \quad\text{ if $m$ is even};\\
                                       \abs{a}^{m} \text{sgn}(a) \quad\text{ if $m$ is odd}.\end{cases}
    \end{equation}
    Then $\omega$ is exactly the central character of $D_{l_i}\abs{\quad}^{\frac{m}{2}}$, for all $i$ and $\omega_\pi (a\RI_{2n}) = (\omega(a))^n$. Clearly, the restriction of $\omega$ on $\{\pm 1\}$, denoted by $\omega_0$, is trivial when $m$ is even; and is the sign character when $m$ is odd. The highest weight of the minimal $K$-type of $\pi$,
    which is denoted by $\tau$,  is $(l_1+1,l_2+1,\cdots,l_n+1)$. We will explain the highest weight for $K=\RO_{2n}(\R)$ with more details
    in Subsection \ref{subsection: Construction of the Scalar-valued Function in the Minimal K-type}. The above discussion can be summarized in the following proposition.
\begin{prop}\label{structure-pi}
Let $(\pi, V_\pi)$ be an irreducible essentially tempered Casselman-Wallach representation of $G$ with property that
    $$\RH^*(\mathfrak{g},K^0Z^0,\pi\otimes F_\nu^\vee)\ne 0.$$
Then $\pi$ is equivalent to the normalized induced representation
$$
\Ind_{P}^{G} D_{l_1}\abs{\quad}^{\frac{m}{2}}\otimes D_{l_2}\abs{\quad}^{\frac{m}{2}} \otimes \cdots\otimes D_{l_n}\abs{\quad}^{\frac{m}{2}},
$$
as given in \eqref{Eq: pi parabolic induction parameter}, and with the central character $\omega_\pi$ being given in \eqref{eq: central char}.
Moreover, the minimal $K$-type $\tau$ of $\pi$ has the highest weight
$(l_1+1,l_2+1,\cdots,l_n+1)$.
\end{prop}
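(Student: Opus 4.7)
The plan is to combine the classification of irreducible essentially tempered Casselman--Wallach representations of $\GL_{2n}(\mathbb{R})$ with the cohomological purity condition stated in \eqref{eq: purity condition} and Vogan--Zuckerman theory. First I would recall that every irreducible essentially tempered representation of $\GL_{m}(\mathbb{R})$ is (normalizedly) parabolically induced from a product of essentially square-integrable representations of $\GL_{m_i}(\mathbb{R})$; but $\GL_m(\mathbb{R})$ has essentially discrete series only for $m=1$ (the characters) and $m=2$ (the $D_k\abs{\quad}^t$). So $\pi$ is induced from a Levi that is a product of factors of size $1$ and $2$. The cohomological condition $\RH^*(\mathfrak{g},K^0Z^0,\pi\otimes F_\nu^\vee)\ne 0$ already rules out $\GL_1$ factors, since a character of $\GL_1(\mathbb{R})$ contributes an odd-dimensional summand whose infinitesimal character cannot be matched with that of an $F_\nu$ satisfying the purity condition. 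This forces the Levi to be the one attached to the partition $[2^n]$, so
\[
\pi\simeq \Ind_{P}^{G}\bigl(D_{k_1}\abs{\quad}^{t_1}\otimes\cdots\otimes D_{k_n}\abs{\quad}^{t_n}\bigr)
\]
for some positive integers $k_i$ and real numbers $t_i$.

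Next I would impose the requirement that the infinitesimal character of $\pi$ coincide with that of $F_\nu$, i.e.\ with $\nu+\rho$. The infinitesimal character of the $i$-th induced factor $D_{k_i}\abs{\quad}^{t_i}$ is $(t_i+\tfrac{k_i}{2},\,t_i-\tfrac{k_i}{2})$, so matching the $2n$-tuples (as unordered multisets modulo the Weyl group, then ordered by decreasing real part after using the fact that $D_{k_i}$ is self-dual up to sign) yields, after a short bookkeeping with \eqref{eq: half sum positive roots}, the two families of equations
\[
t_i+\tfrac{k_i}{2}=\nu_i+\tfrac{2n+1-2i}{2},\qquad t_i-\tfrac{k_i}{2}=\nu_{2n+1-i}+\tfrac{2i-2n-1}{2}.
\]
Adding and subtracting, together with the purity relation $\nu_i+\nu_{2n+1-i}=m$, one obtains $t_i=m/2$ and $k_i=\nu_i-\nu_{2n+1-i}+(2n+1-2i)=l_i$ as in \eqref{eq: l_i eq}. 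This establishes \eqref{Eq: pi parabolic induction parameter}. The parity statement recorded in the text (all $l_i$ have the same parity, opposite to that of $m$) then follows from the congruence $l_i\equiv 2n+1-2i\pmod 2$ combined with the central character relation for $D_{l_i}$ in \eqref{eq: central char discrete series}.

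The central character assertion \eqref{eq: central char} is now a direct calculation: each $D_{l_i}\abs{\quad}^{m/2}$ has central character $a\mapsto \omega(a)$ as defined in \eqref{eq: omega char}, because the quadratic central character of $D_{l_i}$ equals $\eta_{l_i}$, and all $l_i$ have the same parity. Evaluation at the scalar $a\RI_{2n}$ contributes one factor of $\omega(a)$ per $\GL_2$-block, giving $\omega_\pi(a\RI_{2n})=\omega(a)^n$. Finally, for the minimal $K$-type I would invoke Vogan's theorem on lowest $K$-types of principal series: the minimal $\mathrm{O}(2)$-type of $D_{l_i}$ has highest weight $l_i+1$, and for the parabolic of type $[2^n]$ embedded in $\mathrm{O}(2n)$ the lowest $K$-type of the induced representation is the unique irreducible $\mathrm{O}(2n)$-representation whose highest weight, with respect to the standard positive system used in Subsection \ref{subsection: Construction of the Scalar-valued Function in the Minimal K-type}, is the concatenation $(l_1+1,l_2+1,\dots,l_n+1)$.

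The step I expect to require the most care is the matching of infinitesimal characters together with the accompanying ordering issue: a priori the Weyl group of $\GL_{2n}$ can permute the pairs $(t_i\pm k_i/2)$, and one must use the purity condition \eqref{eq: purity condition} to see that, up to relabelling the $\GL_2$-factors, the pairing between the $\nu_i$'s and the induced factors is forced and produces the specific formula \eqref{eq: l_i eq}. Once this pairing is pinned down, the remaining assertions about $\omega_\pi$ and the minimal $K$-type are formal.
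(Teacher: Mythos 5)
Your write-up is, in substance, a correct proof, but note that the paper itself does not prove Proposition \ref{structure-pi} at all: Subsection \ref{sec-CRGL2n} simply recalls the Langlands parameter of cohomological essentially tempered representations from the references (Clozel, Mahnkopf, Grobner--Raghuram) and offers the proposition as a summary. So your route is ``different'' only in that you actually derive what the paper delegates to citations; that is the standard argument those references contain, and reconstructing it is the right thing to do. Two points need tightening. First, the exclusion of $\GL_1$-factors is not a one-liner about ``odd-dimensional summands.'' The clean argument: essential temperedness forces every inducing factor to carry the same twist $\abs{\det}^{s_0}$; a $\GL_1$-factor then contributes the single entry $s_0$ to the infinitesimal character; the number of $\GL_1$-factors is even because $2n$ is; and summing all entries of $\nu+\rho$ (using $\sum_i\rho_i=0$ together with purity) forces $s_0=m/2$. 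Hence two or more $\GL_1$-factors would produce a repeated entry $m/2$, contradicting the regularity of $\nu+\rho$, which is strictly decreasing since $\nu$ is dominant and $\rho$ decreases in steps of $1$. (This regularity is also what makes your pairing argument work: a strictly decreasing $2n$-tuple can be split into pairs of equal sum only by matching the $i$-th largest with the $i$-th smallest, which is exactly how purity pins down the pairing in \eqref{eq: l_i eq}.) Second, your parenthetical congruence $l_i\equiv 2n+1-2i\pmod 2$ is wrong as stated; the correct computation is $l_i\equiv \nu_i+\nu_{2n+1-i}+2n+1\equiv m+1\pmod 2$, which is precisely the assertion that all $l_i$ share one parity, opposite to that of $m$. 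With these repairs, the central character computation and the appeal to Vogan's lowest-$K$-type theorem (\cite[Proposition 8.1]{V2}, which the paper invokes only later, in Subsection \ref{subsection: reduction to polynomial repn}) complete the proof.
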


\subsection{Shalika model and linear model}\label{sec-SMLM}
    Let $F = \R$ or $\C$. Let us fix a non-trivial unitary character $\psi$ of $F$ and a multiplicative character $\tilde{\omega}$ of $F^\times$. For any positive integer $n'$, we say a Casselman-Wallach representation $\sigma$ of $\GL_{2n'}(F)$ has a non-zero $(\tilde{\omega}, \psi)-$Shalika model if there exists a non-zero continuous linear functional $\lambda$ on the Fr\'{e}chet space $V_\sigma$, which is called a {\sl Shalika functional}, such that
    \begin{equation}\label{Eq: Def of Shalika Functional}
        \langle \lambda, \pi(\mtrtwo{\RI_{n'}}{Y}{}{\RI_{n'}}\mtrtwo{g}{}{}{g})v \rangle =\tilde{\omega}(\det g) \psi(\tr(Y))\langle \lambda, v\rangle ,
    \end{equation}
    for any $v\in V_\sigma$, $g\in \GL_{n'}(F)$ and any $n'\times n'$ matrix $Y\in\Mat_{n'}(F)$. In Section \ref{section: existence of Shalika model}, we will show (in Theorem \ref{thm: parabolic induction preserves shalika model})
    that the normalized parabolic induction from two representations with non-zero $(\tilde{\omega}, \psi)-$Shalika models also admits a non-zero $(\tilde{\omega}, \psi)-$Shalika model. The existence of the $(\tilde{\omega}, \psi)-$Shalika model of the parabolic induction should be viewed as a direct archimedean analogue of \cite[Theorem 1.1]{Mat}. As a consequence, if we apply $F=\R$ and $\tilde{\omega} = \omega$, the central character of each $D_{l_j}\abs{\quad}^{\frac{m}{2}}$, we can conclude that the representation $\pi$ as in Proposition \ref{structure-pi} also has a non-zero $(\omega, \psi)-$Shalika model.  In the literature, the uniqueness of Shalika functional is proved in \cite[Theorem 1.1]{A-G-J} when $\tilde{\omega}$ is trivial. For the general case, the same result is confirmed in
    \cite[Theorem A]{Ch-Sun}. For a character $\chi$ of $\R^\times$,  the local archimedean integral of Friedberg-Jacquet at real place as in \eqref{lzi-FJ} can be re-written as
    \begin{equation}\label{Eq: Def of Local Integral}
       Z(v,s,\chi) = \int_{\GL_n(\R)} \langle \lambda, \pi(\mtrtwo{g}{}{}{\RI_n})v \rangle \abs{\det g}^{s-\frac{1}{2}}\chi(\det g) dg
    \end{equation}
    for $v\in V_\pi$. We note that when both $\chi$ and $\omega_\pi$ are trivial, the integral in \eqref{Eq: Def of Local Integral}
    is exactly the local integral considered in \cite{A-G1}.

    By \cite[Theorem 3.1]{A-G-J}, the integral \eqref{Eq: Def of Local Integral} converges absolutely when $\re(s)$ is sufficiently large. $Z(v,s,\chi)$ is a homomorphic multiple of $L(s,\pi\otimes\chi)$ in the sense of meromorphic continuation and there exists a smooth vector $v\in V_\pi$ such that $Z(v,s,\chi)=L(s,\pi\otimes\chi)$. Thus whenever $s=s_0$ is not a pole of $L(s,\pi\otimes\chi)$, $Z(v,s,\chi)$ has no pole at $s=s_0$. This implies that the map: $v \mapsto Z(v, s_0,\chi)$ defines a nonzero element in
    $$
    \text{Hom}_{H}(V_\pi, \abs{\det}^{-s_0+\frac{1}{2}}\chi^{-1}(\det)\otimes\abs{\det}^{s_0-\frac{1}{2}}(\chi\omega)(\det)),
    $$
    which is called the space of {\sl twisted linear functionals} of $\pi$. The uniqueness of the twisted linear model is proved in \cite{Ch-Sun}. In our scenario, we apply \cite[Theorem B]{Ch-Sun} and conclude that for all but countably many characters $\chi$,
    \begin{equation}\label{Eq: uniqueness of twisted linear model}
        \text{dim Hom}_{H}(\pi, \abs{\det}^{-s_0+\frac{1}{2}}\chi^{-1}(\det)\otimes\abs{\det}^{s_0-\frac{1}{2}}(\chi\omega)(\det))\leq 1.
    \end{equation}
    In fact, if
        $\abs{\det}^{-s_0+\frac{1}{2}}\chi^{-1}(\det)\otimes\abs{\det}^{s_0-\frac{1}{2}}(\chi\omega)(\det)$
    is a good character of $H$, then \eqref{Eq: uniqueness of twisted linear model} holds. For the precise definition of a good character of $H$, we refer to \cite{Ch-Sun}. Finally we remark that when $\chi$ and $\omega$ are both trivial and $s_0=\frac{1}{2}$, the uniqueness theorem is proved in \cite{A-G}.

\subsection{Cohomological test vectors and non-vanishing property}\label{sec-MR}
 As explained above, any irreducible essentially tempered Casselman-Wallach representation $\pi$ of $G$ with non-zero cohomology must be of the form given in Proposition \ref{structure-pi}, and
such a $\pi$ must have a non-zero Shalika model. Yet, even with an explicit vector $v\in V_\pi$, the Shalika function $\langle \lambda, \pi(\mtrtwo{g}{}{}{\RI_{n}})v\rangle$ is hard to evaluate, let alone the local  Friedberg-Jacquet integral $Z(v,s,\chi)$. Alternatively, we construct explicitly another twisted linear functional $\Lambda_{s,\chi}$
 given by an explicit integral in \eqref{eq: new def H inv linear functional}. The advantage of this newly constructed twisted linear functional is that the value of $\Lambda_{s,\chi}$ at a vector $f\in V_\pi$ with a certain right $K\cap H$-equivariance (realize $f$ as a function using parabolic induction) is easy to compute once we know the value of $f$ at one certain Weyl element $w$ (see \eqref{eq: 020}).  We recall that when $\pi$ is cohomological, a cohomological vector $v$ of $\pi$ is defined to be a vector $v$ belonging to the minimal $K$-type $\tau$ of $\pi$. Thus, to prove the non-vanishing result of Friedberg-Jacquet integral $Z(v,s,\chi)$, it suffices to construct \textbf{explicitly} a function $f\in V_\pi$ in the minimal $K$-type $\tau$ with a certain right $K\cap H$-equivariance, which is a highly non-trivial task.

 Let us only outline the key ingredients in the explicit construction of the cohomological vector $f\in V_\pi$.
 \begin{enumerate}
   \item With some reductions discussed in Subsection \ref{subsection: reduction to polynomial repn} via the compact induction model, it suffices to construct a $\BC$-valued function, which will be denoted by $\varphi_{\vec{N},\chi_0\otimes\chi_0\omega_0}(x)$, in the minimal $K$-type of a certain induced representation $\pi_{sc}$ defined in \eqref{eq: induced space complex value}.
   \item According to the representation theory of compact Lie groups, we have the fact that under the left $K$-action, if a function $\varphi$ is a lowest or highest weight vector of an irreducible $K$-submodule $\alpha$ of $C^\infty(K)$, then it generates $\alpha^*$ under the right $K$-action. See Lemma \ref{lemma: useful dual lemma}.
   \item Via Lemma \ref{lemma: useful dual lemma} and the classical invariant theory, we can construct a weight-building function $F_{wt, \vec{N}}(x)\in C^\infty(K)$ in Corollary \ref{cor: weight building function minimal K type}, which will help us build up the weight of $\varphi_{\vec{N},\chi_0\otimes\chi_0\omega_0}(x)$. Two determinant functions $F_{\lt}(x), F_{\rt}(x)$ that govern the right $(K\cap H)$-equivariance are constructed in Proposition \ref{prop: det lives in fundamental repn}. Then define $\varphi_{\vec{N},\chi_0\otimes\chi_0\omega_0}(x)$ to be a product of them (see Theorem \ref{thm: phi0 lives in minimal K type}).
 \end{enumerate}

  The explicit construction of the cohomological vector $v=f(k)$ as in Corollary \ref{cor: construction of cohomological vector} based on $\varphi_{\vec{N},\chi_0\otimes\chi_0\omega_0}$ leads to the main theorem of this paper:

\begin{thm}[Main Theorem]\label{thm-main}
Let $(\pi, V_\pi)$ be an irreducible essentially tempered Casselman-Wallach representation of $G$ with property that
     $$\RH^*(\mathfrak{g},K^0Z^0,\pi\otimes F_\nu^\vee)\ne 0,$$
and $\Lambda_{s,\chi}$ be the linear functional on $V_\pi$ defined in \eqref{eq: new def H inv linear functional}.
Then the following hold.
\begin{enumerate}
  \item Whenever $s$ is not a pole of the $L$-function $L(s, \pi\otimes\chi)$, \begin{equation*}
       \Lambda_{s,\chi}\in \Hom_{H}(\pi, \abs{\det}^{-s+\frac{1}{2}}\chi^{-1}(\det)\otimes\abs{\det}^{s-\frac{1}{2}}(\chi\omega)(\det)).
    \end{equation*}
  \item There exists a cohomological vector $v=f(k)$ as explicitly constructed in Corollary \ref{cor: construction of cohomological vector} such that as a meromorphic function of $s\in \BC,$
$$
     \frac{1}{L(s,\pi\otimes\chi)}\Lambda_{s,\chi}(v) = 1.
$$
\end{enumerate}
\end{thm}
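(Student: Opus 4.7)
The plan for part (1) is to verify the $H$-equivariance directly from the explicit integral formula for $\Lambda_{s,\chi}$ in \eqref{eq: new def H inv linear functional}. I would first establish the absolute convergence of the defining integral in a right half-plane $\re(s) \gg 0$ using standard matrix coefficient estimates for the essentially tempered representation $\pi$ combined with the gauge estimates on the domain. In that half-plane, the required transformation law under $H \simeq \GL_n(\BR)\times\GL_n(\BR)$ should reduce to a change of variables in the integrand, where the Jacobian contributions produce exactly the twist $|\det|^{-s+\frac12}\chi^{-1}(\det)\otimes |\det|^{s-\frac12}(\chi\omega)(\det)$. Once equivariance holds on the domain of absolute convergence, meromorphic continuation extends it to all non-polar $s$, and the only possible poles come from $L(s,\pi\otimes\chi)$ by the comparison with the Friedberg-Jacquet integral discussed in Subsection~\ref{sec-SMLM}.

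For part (2), the strategy unfolds along the three-step construction outlined in Subsection~\ref{sec-MR}. First, I would pass from the parabolic induction model for $\pi$ in \eqref{Eq: pi parabolic induction parameter} to the compact induction model, realizing vectors in $V_\pi$ as smooth sections on $K=\RO_{2n}(\BR)$. Under this reduction, a cohomological vector that is right-equivariant under $K\cap H$ is completely determined by a scalar function $\varphi_{\vec N,\chi_0\otimes\chi_0\omega_0}\in C^\infty(K)$ lying in the minimal $K$-type $\tau$ of weight $(l_1+1,\ldots,l_n+1)$, and the computation of $\Lambda_{s,\chi}(v)$ reduces (after using the $H$-equivariance to fold the integral onto the open orbit of the Weyl element $w$) to evaluating $\varphi_{\vec N,\chi_0\otimes\chi_0\omega_0}$ at $w$, as in \eqref{eq: 020}.

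The main obstacle, and the heart of the paper, is the explicit construction of the function $\varphi_{\vec N,\chi_0\otimes\chi_0\omega_0}$. My approach here is classical invariant theory, guided by the duality lemma (Lemma~\ref{lemma: useful dual lemma}): a function on $K$ that is a highest (or lowest) weight vector for one side of the bi-regular $K\times K$-action generates the dual representation under the other side. Accordingly, I would build $\varphi_{\vec N,\chi_0\otimes\chi_0\omega_0}(x)$ as a product of a weight-building factor $F_{wt,\vec N}(x)$ (a polynomial in the matrix entries of $x\in K$ assembled from classical invariants so that its left-$K$ weight matches $\tau$) with the determinantal factors $F_{\lt}(x)$ and $F_{\rt}(x)$ that carry the correct right-$(K\cap H)$ equivariance under $\chi_0\otimes\chi_0\omega_0$. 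Verifying that the product indeed lies in the irreducible $K$-type $\tau$ requires a careful weight computation using the formula \eqref{eq: l_i eq}, the branching of the minimal $K$-type of $D_{l_j}|\cdot|^{m/2}$ under $\RO_2(\BR)$, and the fact that the determinant factors live in fundamental representations of the orthogonal blocks (as will be established in Proposition~\ref{prop: det lives in fundamental repn}).

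With $v=f(k)$ from Corollary~\ref{cor: construction of cohomological vector} in hand, the final step is to compute $\Lambda_{s,\chi}(v)$ and match it with $L(s,\pi\otimes\chi)$. The plan is to use the right-$(K\cap H)$ equivariance of $f$ to collapse the $\GL_n(\BR)$-integral defining $\Lambda_{s,\chi}$ onto an integral supported on the open $H$-orbit of $w$, where $f(w)$ is known explicitly. Because $\pi$ is induced from the block-diagonal Levi, this integral factorizes into a product of $n$ one-dimensional Mellin-type integrals, one for each factor $D_{l_j}|\cdot|^{m/2}$; each of these evaluates to the corresponding $\GL_2$-local $L$-factor $L(s,D_{l_j}|\cdot|^{m/2}\otimes\chi)$, and the product is exactly $L(s,\pi\otimes\chi)$. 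The most delicate point in this final step will be tracking the normalization so that the product of $\Gamma$-factors produced by the one-dimensional integrals matches the archimedean $L$-factor on the nose, rather than up to an exponential-in-$s$ ambiguity; this normalization dictates the precise choice of the constants built into $\varphi_{\vec N,\chi_0\otimes\chi_0\omega_0}$.
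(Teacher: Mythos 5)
Your outline of part (2) is essentially the paper's proof: reduction to the compact induction picture, passage to a scalar-valued function in $\pi_{\textrm{sc}}$, construction of $\varphi_{\vec N,\chi_0\otimes\chi_0\omega_0}$ as a product of the weight-building function and the determinantal factors $F_{\lt},F_{\rt}$ via Lemma \ref{lemma: useful dual lemma} and classical invariant theory, evaluation at the Weyl element $w$, and factorization of $\Lambda_{s,\chi}(f)=\langle\bigotimes_j\lambda_j, f(w)\rangle$ into $n$ $\GL_2$-factors. One point you flag as ``the most delicate'' -- matching the product of $\Gamma$-factors with $L(s,\pi\otimes\chi)$ on the nose -- is in fact dispatched in the paper by Popa's theorem: the basis of $\tau_{l_j}^{\epsilon,\chi}$ is normalized so that $\lambda_j(v_j+\chi(-1)v_{-j})=L(s,\sigma_j\otimes\chi)$ exactly, so no further $\Gamma$-bookkeeping enters. (The exponential-in-$s$ ambiguity lives only in the comparison of $\Lambda_{s,\chi}$ with the genuine Friedberg--Jacquet integral $Z$, i.e.\ in Corollary \ref{cor: relation between two linear models}, not in the evaluation of $\Lambda_{s,\chi}(f)$.) Also a small but meaningful correction: the orbit $PwH$ used here is the \emph{closed} orbit, with stabilizer $w^{-1}Pw\cap H=B_1\times B_2$, not an open orbit; its closedness is precisely why $\Lambda_{s,\chi}$ is an integral over the compact set $K\cap H$ and why no convergence analysis beyond the $\GL_2$ level is needed.

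The genuine gap is in your route to the analytic half of part (1). You propose to get absolute convergence from tempered matrix-coefficient estimates and then to locate the poles ``by comparison with the Friedberg--Jacquet integral.'' Neither step is how the paper argues, and the second is logically problematic. Since $\Lambda_{s,\chi}$ is an integral over the compact group $K\cap H$ of $F_s(g;\varphi)=\langle\bigotimes_i\lambda_{s,i},\varphi(wg)\rangle$, all of the $s$-dependence and all of the analytic content sit inside the $\GL_2(\R)$ Hecke functionals $\lambda_{s,i}$; Corollary \ref{Cor: GL(2, R) Non-vanishing} already says each $\lambda_{s,i}$ is a holomorphic multiple of $L(s,\sigma_i\otimes\chi)$, and since $L(s,\pi\otimes\chi)=\prod_i L(s,\sigma_i\otimes\chi)$, Proposition \ref{Cor: analytic property of Lambda} follows with no global estimates at all. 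By contrast, comparing $\Lambda_{s,\chi}$ with $Z(v,s,\chi)$ requires the multiplicity-one statement \eqref{Eq: uniqueness of twisted linear model}, which is only available for all but countably many $s$ (good characters), and even then yields $Z=C(s,\chi)\Lambda_{s,\chi}$ with an a priori uncontrolled proportionality factor; in the paper this comparison is carried out only \emph{after} the analytic properties of $\Lambda_{s,\chi}$ have been established independently (it is the content of Corollary \ref{cor: relation between two linear models}, which needs Theorem \ref{thm: cohomological test vector} as input to pin down $C(s,\chi)$). Running the comparison in the direction you propose would be circular. The fix is simply to derive the holomorphy statement from the $\GL_2$ building blocks, which is what the definition of $\Lambda_{s,\chi}$ via the $\lambda_{s,i}$ is designed to make immediate.
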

    \begin{rk}
     It turns out that the integral $\Lambda_{s,\chi}$ constructed in this paper is exactly the Friedberg-Jacquet integral, up to suitable normalization of Haar measures. The proof will be given in \cite{JST} with full details, together with an application to the global period relations of critical values of the twisted standard $L$-functions at different critical places.
     \end{rk}

 The most technical parts of the paper are: one is to construct the new linear functional $\Lambda_{s,\chi}$ in Section \ref{Subsection: Another Linear Model}; and the other is the explicit construction of the right cohomological vector $v=f(k)$ of $\pi$ belonging to the minimal $K$-type and having the desired property in
Section \ref{Section: Cohomological vector realization}.

Finally, we would like to thank Lei Zhang for helpful conversation during our preparation of this paper, and thank Binyong Sun for sending us his preprints \cite{Ch-Sun} and \cite{Sun}
when we were writing up this paper. We would also like to thank the referee for very helpful comments and suggestions.


\section{Cohomological Representations and Shalika Models}\label{section: existence of Shalika model}


The goal of this Section is to prove a hereditary property of Shalika models with respect to normalized parabolic induction. Let us first start from the case of $\GL_2.$ For future applications, we will consider both the real case and complex case in this Section, i.e. $F=\R $ or $\C$.

Suppose that we have $l$ Frech\'{e}t spaces $V_1, V_2, \cdots, V_l$. Let $\lambda_j$ be a continuous linear functional on $V_j$ ($j =1,2,\cdots,l$). Then $\bigotimes_{j=1}^l\lambda_{j}$ is a continuous linear functional
on the projective tensor space $\widehat{\bigotimes}_{j=1}^l V_{j}$, which is also a Fr\'{e}chet space.

Let $\sigma$ be a generic Casselman-Wallach representation of $\GL_2(F)$ with a central character $\omega_\sigma$. We fix a nontrivial unitary character $\psi$ of $F$. Then $\sigma$ admits a non-zero Whittaker model $\mathcal{W}(\sigma,\psi)$, i.e. there exists a continuous linear functional $\lambda_\sigma$ on the Fr\'{e}chet space $V_\sigma$ such that
    \begin{equation*}
        \langle \lambda_\sigma, \sigma(\mtrtwo{1}{x}{}{1})v \rangle = \psi(x)\langle \lambda_\sigma, v\rangle.
    \end{equation*}
    Thus
    \begin{equation*}
        \langle \lambda_\sigma, \sigma(\mtrtwo{1}{x}{}{1}\mtrtwo{a}{}{}{a})v \rangle = \omega_\sigma(a)\psi(x)\langle \lambda_\sigma, v\rangle,
    \end{equation*}
    which exactly coincides with \eqref{Eq: Def of Shalika Functional} for $n=1$. Hence any such $\sigma$ has a non-zero $(\omega_\sigma, \psi)-$Shalika model.

    The main theorem in this Section is formulated below, which can be regarded as an archimedean analogue of \cite[Theorem 1.1]{Mat}.
    \begin{thm}\label{thm: parabolic induction preserves shalika model}
        Let $F = \R$ or $\C$ be an archimedean local field. Let $\omega$ be a character of $F^\times$ and $\psi$ be a nontrivial unitary character of $F$. For two even positive integers $n_1 = 2m_1$ and
$n_2 = 2m_2$, take two Casselman-Wallach representations $\pi_1$ and $\pi_2$ of $\GL_{n_1}(F)$ and $\GL_{n_2}(F)$, respectively, and
assume that both $\pi_1$ and $\pi_2$ have $(\omega, \psi)-$Shalika models. Then the normalized parabolic induction
        $\pi := \Ind_{P_{n_1,n_2}(F)}^{\GL_{n_1+n_2}(F)} \pi_1\otimes\pi_2$
        also has a non-zero $(\omega, \psi)-$Shalika model. Here $P_{n_1,n_2}$ is a standard parabolic subgroup of $\GL_{n_1+n_2}$ with its Levi part isomorphic to $\GL_{n_1}\times\GL_{n_2}$.
    \end{thm}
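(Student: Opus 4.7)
The plan is to construct an explicit Shalika functional on $\pi = \Ind_{P_{n_1,n_2}(F)}^{\GL_{n_1+n_2}(F)}\pi_1\otimes\pi_2$ by an integral formula involving the given Shalika functionals $\lambda_1,\lambda_2$ on $\pi_1,\pi_2$, in the spirit of Matringe's non-archimedean argument, and then carry out the archimedean analytic estimates needed to make the construction rigorous. Set $n=m_1+m_2$ so that $\GL_{n_1+n_2}=\GL_{2n}$. The first step is to choose a block-permutation Weyl element $w_0$ that carries the diagonal embedding $\GL_{n_1}\times\GL_{n_2}\hookrightarrow\GL_{2n}$ into a form compatible with the Shalika decomposition of $\GL_{2n}$; concretely, $w_0$ reorders the coordinates $(1,\dots,m_1,m_1+1,\dots,2m_1,2m_1+1,\dots,2m_1+m_2,2m_1+m_2+1,\dots,2m_1+2m_2)$ into $(1,\dots,m_1,2m_1+1,\dots,2m_1+m_2,m_1+1,\dots,2m_1,2m_1+m_2+1,\dots,2m_1+2m_2)$. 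Under $w_0$, the $n\times n$ blocks of the big Shalika subgroup $S_{2n}$ split into the $m_i\times m_i$ blocks of the smaller Shalika subgroups $S_{n_1},S_{n_2}$, plus an intertwiner that accounts for the unipotent radical of $P_{n_1,n_2}$.

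Next, for a smooth section $f\in V_\pi$ I would define
\begin{equation*}
\Lambda(f)\ :=\ \int_{U'}(\lambda_1\otimes\lambda_2)\bigl(f(w_0^{-1}u)\bigr)\,\psi^{-1}\bigl(\mathrm{tr}\,X(u)\bigr)\,du,
\end{equation*}
where $U'$ is the subgroup of the unipotent radical of the opposite parabolic $\bar{P}_{n_1,n_2}$ complementary to $w_0^{-1}S_{2n}w_0\cap\bar{P}_{n_1,n_2}$, and $X(u)$ is the off-diagonal coordinate function arising from the Shalika character transported through $w_0$. The Shalika equivariance of $\Lambda$ is then a change-of-variables computation: given $s(Y,g)\in S_{2n}$, one writes $s(Y,g)w_0^{-1}u=w_0^{-1}u'p$ via an Iwasawa/Bruhat-type decomposition with $p\in P_{n_1,n_2}(F)$, tracks the Jacobian and the additive characters, and then the Shalika transformation laws \eqref{Eq: Def of Shalika Functional} on $\pi_1$ and $\pi_2$ feed exactly into the required transformation for $\Lambda$ on $\pi$. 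Non-vanishing of $\Lambda$ is then a standard bump-function argument: pick a smooth section $f$ with sufficiently small support near $w_0$ in the big cell so that the inner integrand is a non-negative function not identically zero.

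The main obstacle is absolute convergence of the defining integral, which is essentially trivial in the non-archimedean setting of \cite{Mat} but delicate here. The idea is to exploit the Casselman--Wallach property of $\pi_1,\pi_2$: the functionals $\lambda_1,\lambda_2$ are continuous on Fr\'echet spaces with explicit semi-norms (dominated by finitely many continuous semi-norms in the smooth topology), and the composition $u\mapsto(\lambda_1\otimes\lambda_2)(f(w_0^{-1}u))$ can therefore be bounded by a Schwartz-type majorant on the Levi orbit after Iwasawa decomposition of $w_0^{-1}u$. Writing $w_0^{-1}u=k(u)\,a(u)\,n(u)$ with $a(u)$ the diagonal component, the modulus of the induced section is controlled by $\delta_{P}^{1/2}(a(u))$ times a Schwartz norm of $f$; combined with rapid decay of Whittaker/Shalika coefficients along $a(u)$, one gets integrability on $U'$ for $u$ in a gauge-compact region. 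For the non-compact directions in $U'$ one uses that, after conjugating $\lambda_i$ into its Shalika model, the matrix coefficient $g\mapsto\langle\lambda_i,\pi_i(g)v\rangle$ satisfies Schwartz estimates away from the Shalika subgroup, which bounds the integrand by a Schwartz function on $U'$. This last estimate is the heart of the argument; once it is in place, Steps~2, 4 and 5 above go through as in the non-archimedean case.
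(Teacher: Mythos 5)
Your overall strategy is the same as the paper's (both are archimedean adaptations of Matringe's argument): transport by a block-permutation Weyl element, integrate the functional $\lambda_1\otimes\lambda_2$ applied to $f(w_0^{-1}\cdot)$ over a unipotent set, check Shalika equivariance by unfolding, prove convergence from decay of Shalika coefficients, and get non-vanishing from a bump function. However, as written the proposal has two genuine problems. First, the integration domain is misidentified: the correct domain is (the $\Ad(w_0^{-1})$-image of) a complement of $S_{2n}\cap w_0P_{n_1,n_2}w_0^{-1}$ \emph{inside} $S_{2n}$, which happens to lie in the unipotent radical $\bar{U}$ of $\bar{P}_{n_1,n_2}$; concretely it is $\bar{U}\cap w_0^{-1}S_{2n}w_0$, of dimension $2m_1m_2$ (an $m_2\times m_1$ block coming from the Shalika unipotent plus the lower unipotent $N^-$ of the diagonal $\GL_{m_1+m_2}$). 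Your $U'$ is described as the subgroup of $\bar{U}$ \emph{complementary} to $w_0^{-1}S_{2n}w_0\cap\bar{P}_{n_1,n_2}$, which is essentially the opposite set; integrating there the change-of-variables needed for equivariance fails, since $w_0U'w_0^{-1}$ must be a section of $(S_{2n}\cap w_0Pw_0^{-1})\bks S_{2n}$. Relatedly, on the correct domain the Shalika character is trivial (the relevant block of $Y$ is off-diagonal so $\tr$ vanishes, and $\det=1$ on $N^-$), so the twist $\psi^{-1}(\tr X(u))$ should not appear. A further point glossed over is the $\omega(\det g)$-equivariance under the full diagonal $\GL_{m_1+m_2}$: the unipotent integral alone only gives equivariance under the parabolic $P_{m_1,m_2}^\triangle$ with an extra modulus factor, and the paper resolves this by a second averaging of the intermediate function $H$ over $P_{m_1,m_2}\bks\GL_{m_1+m_2}$ (realized compactly over $K_m$ against $\omega^{-1}(\det k)$), which also makes convergence in those directions trivial.

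Second, the analytic input for convergence is not what you cite. There is no Schwartz estimate for $g\mapsto\langle\lambda_i,\pi_i(g)v\rangle$ away from the Shalika subgroup; what is available (from Aizenbud--Gourevitch--Jacquet, and what the paper proves in its Lemma on enhanced estimates) is the bound $\abs{\langle\lambda_\sigma,\sigma(\diag(g,\RI))v\rangle}\leq\beta_M(v)\abs{\det g}^{-M}$ for every sufficiently large $M$, i.e.\ arbitrarily fast decay only in the $\abs{\det}$-direction. To make this usable one needs the geometric lemma that if $\bar{u}(x)=u(x)t(x)k(x)$ is the Iwasawa decomposition of the lower unipotent $\mtrtwo{\RI_{m_1}}{}{x}{\RI_{m_2}}$, then the lower-right toral block satisfies $\det(a_2(x))\geq(1+\sum_j\norm{x_j}^2)^{1/2}$; combining the two and absorbing the modulus character $\delta_P^{1/2}$ yields an integrable majorant $(1+\norm{x}^2)^{-M'}$. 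Since you explicitly flag this estimate as the heart of the argument but do not supply it, and since the stated substitute would not work, this is a gap you would need to close along the lines above.
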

    The proof of Theorem \ref{thm: parabolic induction preserves shalika model} will occupy the rest of this Section. Our proof is very similar to that of \cite[Theorem 1.1]{Mat}, and we borrow some continuity arguments from \cite[Section 6.3]{JST}. We start with some estimates on Shalika functionals.
    \begin{lemma}\label{lemma: enhanced estimates shalika}
        Let $k$ be a positive integer and $\sigma$ be a Casselman-Wallach representation of $\GL_{2k}(F)$ with a Shalika functional $\lambda_\sigma$. Then there exists a positive integer $M_0$ with property that for any integer $M\geq M_0$ and any polynomial $P$ on $\Mat_k(F)$, there exists a continuous seminorm $\beta_{P,M}$, such that the following estimate
            \begin{equation}\label{eq: enhanced lemma pre}
                 \abs{P(g)}\cdot\abs{\langle \lambda_\sigma, \sigma(\mtrtwo{g}{}{}{\RI_k})v \rangle} \leq \beta_{P,M}(v)\cdot \abs{\det g}^{-M}
            \end{equation}
            holds for all $g\in \GL_k(F)$ and $v\in V_\sigma$.
    \end{lemma}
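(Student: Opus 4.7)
The plan is to combine two ingredients: an infinitesimal derivation trick arising from the Shalika equivariance of $\lambda_\sigma$, which converts multiplication by polynomial functions of $g$ into the action of elements of $\mathcal{U}(\fg)$ on $v$, together with the Casselman--Wallach moderate growth bound, which via Cramer's rule supplies the $|\det g|^{-M}$ decay.

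First, I would establish the derivation trick. Writing $X_Y := \begin{pmatrix} 0 & Y \\ 0 & 0 \end{pmatrix}$ for $Y \in \Mat_k(F)$ and differentiating \eqref{Eq: Def of Shalika Functional} along the unipotent radical $N_S$ of the Shalika subgroup gives $\lambda_\sigma \circ d\sigma(X_Y) = c \cdot \tr(Y) \cdot \lambda_\sigma$, where $c = d\psi|_0$. A direct matrix computation verifies $\Ad(\diag(g, I_k))\, X_Y = X_{gY}$, so setting $f_v(g) := \langle \lambda_\sigma, \sigma(\diag(g, I_k)) v \rangle$ yields the key identity
\begin{equation*}
\tr(gY)\, f_v(g) \;=\; c^{-1}\, f_{d\sigma(X_Y)v}(g).
\end{equation*}
Specializing to $Y = E_{ij}$ gives $g_{ji}\, f_v(g) = c^{-1}\, f_{d\sigma(X_{ij})v}(g)$, and iterating over monomials, for each polynomial $P$ on $\Mat_k(F)$ there is a fixed element $D_P \in \mathcal{U}(\fg)$ depending polynomially on $P$ with
\begin{equation*}
P(g)\, f_v(g) \;=\; c^{-\deg P}\, f_{D_P v}(g),
\end{equation*}
and $v \mapsto D_P v$ is continuous on $V_\sigma$.

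Next, the Casselman--Wallach moderate growth estimate provides an integer $N_0$ and a continuous seminorm $\beta_0$ on $V_\sigma$ with $|\langle \lambda_\sigma, \sigma(h) w \rangle| \leq \beta_0(w)\,\|h\|^{N_0}$, where $\|h\| := \max\{1, |h_{ij}|, |(h^{-1})_{ij}|\}$. Cramer's rule gives $\|g^{-1}\| \leq C \|g\|^{k-1}/|\det g|$, so $\|\diag(g, I_k)\| \leq C(1+\|g\|)^{k-1}\,\max(1, |\det g|^{-1})$ and consequently
\begin{equation*}
|f_w(g)| \;\leq\; C\,\beta_0(w)\,(1+\|g\|)^{N_0(k-1)}\,\max\!\bigl(1,\, |\det g|^{-N_0}\bigr).
\end{equation*}

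To conclude, set $M_0 := N_0$. For given polynomial $P$ and integer $M \geq M_0$, choose $N_1 := \lceil N_0(k-1)/2\rceil$ and $N_2 := \lceil M/2 \rceil$, and take the auxiliary polynomial
\begin{equation*}
R(g) \;:=\; \Bigl(1 + \sum_{i,j} g_{ij}^{2}\Bigr)^{N_1} (1 + (\det g)^{2})^{N_2},
\end{equation*}
so that $|R(g)| \geq (1+\|g\|)^{2N_1}\,\max(1, |\det g|^{2N_2})$. Applying the derivation trick to $R \cdot P$ and then the moderate growth bound with $w = D_{RP} v$, and finally dividing by $|R(g)|$, produces
\begin{equation*}
|P(g)\, f_v(g)| \;\leq\; C\,\beta_0(D_{RP} v)\,\frac{(1+\|g\|)^{N_0(k-1)-2N_1}\,\max(1, |\det g|^{-N_0})}{\max(1, |\det g|^{2N_2})}.
\end{equation*}
A case split on $|\det g| \geq 1$ versus $|\det g| < 1$, using $2N_1 \geq N_0(k-1)$ and $2N_2 \geq M \geq N_0$, gives the claimed bound $|P(g)\, f_v(g)| \leq \beta_{P,M}(v) \cdot |\det g|^{-M}$ with the continuous seminorm $\beta_{P,M}(v) := C\,\beta_0(D_{RP} v)$.

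The main technical point is the bookkeeping that simultaneously handles the two growth directions $\|g\| \to \infty$ and $|\det g| \to 0$; the insight behind $M_0 = N_0$ is that the $|\det g|^{-N_0}$ factor produced by Cramer's rule must be absorbed by the $(1+(\det g)^2)^{N_2}$ factor, which is only possible once $M \geq N_0$. The infinitesimal Shalika identity and the adjoint computation are straightforward, and continuity of $v \mapsto D_P v$ follows from the smooth $\mathcal{U}(\fg)$-module structure on $V_\sigma$, mirroring continuity arguments of \cite[Section 6.3]{JST}.
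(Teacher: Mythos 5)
Your argument is correct and takes essentially the same route as the paper: the paper simply invokes \cite[Lemma 3.4]{A-G-J}, whose proof is exactly your combination of the infinitesimal Shalika-equivariance (derivation) trick with the Casselman--Wallach moderate growth bound, and your auxiliary polynomial $R$ is precisely the paper's observation that $\norm{\diag(g,\RI_k)}^{M_0}\cdot\abs{\det g}^{M}$ is dominated by a polynomial for $M\geq M_0$. Only trivial bookkeeping needs adjusting: the exponent $k-1$ in the Cramer's-rule bound should be $\max(k-1,1)$ so the entries of $g$ itself are covered when $k=1$, and for $F=\C$ one should build $R$ from $\abs{g_{ij}}^2$ and $\abs{\det g}^2$ and recover the real and imaginary parts of the entries separately from the differentiated character.
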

    \begin{proof}
        In \cite{A-G-J}, the authors consider the Shalika functional when $\omega$ (in \eqref{Eq: Def of Shalika Functional}) is trivial. Their proof of \cite[Lemma 3.4]{A-G-J} is still valid when $\omega$ is non-trivial. Also, we note that in the proof of \cite[Lemma 3.4]{A-G-J}, the authors
        define a norm
        $\norm{\cdot}$ on $\GL_{2k}(F)$ and use the fact that the product $\norm{\mtrtwo{g}{}{}{\RI_k}}^{M_0}\cdot \abs{\det{g}}^{M_0}$ is a polynomial. Hence for any integer $M \geq M_0$, $\norm{\mtrtwo{g}{}{}{\RI_k}}^{M_0}\cdot \abs{\det{g}}^{M}$ is still a polynomial. Then a word-by-word repetition of the proof of \cite[Lemma 3.4]{A-G-J} will confirm the lemma here. We omit the details here.
    \end{proof}
    \begin{cor}\label{cor: enhanced estimate shalika}
        With $k$, $\sigma$, and $\lambda_\sigma$ as given in Lemma \ref{lemma: enhanced estimates shalika},
        there exists an positive integer $M_0$ and a real number $m_0$ with property that for any positive integer $M\geq M_0$,
        there exists a continuous seminorm $\beta_M$ satisfying
        \begin{equation}\label{eq: enhanced lemma}
                 \abs{\langle \lambda_\sigma, \sigma(\mtrtwo{g}{}{}{h})v \rangle} \leq \beta_M(v)\cdot\abs{\det g}^{-M}\abs{\det h}^{M-m_0},
        \end{equation}
        for all $g,h\in \GL_{k}(F)$, and $v\in V_\sigma$.
    \end{cor}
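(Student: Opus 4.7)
The strategy is to reduce the two-variable estimate of Corollary \ref{cor: enhanced estimate shalika} to the single-variable estimate of Lemma \ref{lemma: enhanced estimates shalika} by exploiting the equivariance of the Shalika functional $\lambda_\sigma$ under the diagonally embedded copy of $\GL_k(F)$. Specializing \eqref{Eq: Def of Shalika Functional} to the case $Y=0$ gives
$$\lambda_\sigma \circ \sigma(\mtrtwo{h}{}{}{h}) \;=\; \omega(\det h)\,\lambda_\sigma, \qquad h\in \GL_k(F).$$
Using the factorization $\mtrtwo{g}{}{}{h} \;=\; \mtrtwo{h}{}{}{h}\cdot \mtrtwo{h^{-1}g}{}{}{\RI_k}$, I obtain the key identity
$$\langle \lambda_\sigma, \sigma(\mtrtwo{g}{}{}{h})v \rangle \;=\; \omega(\det h)\,\langle \lambda_\sigma, \sigma(\mtrtwo{h^{-1}g}{}{}{\RI_k})v \rangle.$$

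Next, I apply Lemma \ref{lemma: enhanced estimates shalika} with the trivial polynomial $P \equiv 1$ to the right-hand side. Keeping the integer $M_0$ provided by the lemma, for every integer $M\geq M_0$ there exists a continuous seminorm $\beta_{1,M}$ on $V_\sigma$ with
$$\abs{\langle \lambda_\sigma, \sigma(\mtrtwo{h^{-1}g}{}{}{\RI_k})v \rangle} \;\leq\; \beta_{1,M}(v)\cdot \abs{\det(h^{-1}g)}^{-M} \;=\; \beta_{1,M}(v)\,\abs{\det g}^{-M}\,\abs{\det h}^{M}.$$
Note that this is the only place where the hypothesis $M\geq M_0$ enters, and the $M_0$ that appears in the corollary can simply be inherited from the lemma.

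To conclude, since $\omega$ is a continuous quasi-character of $F^\times$, its absolute value takes the form $\abs{\omega(t)} = \abs{t}^{-m_0}$ for a uniquely determined real number $m_0$, which is the $m_0$ claimed in the statement. Setting $\beta_M := \beta_{1,M}$ and combining the two preceding displays yields
$$\abs{\langle \lambda_\sigma, \sigma(\mtrtwo{g}{}{}{h})v \rangle} \;\leq\; \beta_M(v)\,\abs{\det g}^{-M}\,\abs{\det h}^{M-m_0},$$
which is precisely \eqref{eq: enhanced lemma}. The entire content of the argument is the factorization-and-equivariance step in the first paragraph; once that identity is in hand, neither the application of Lemma \ref{lemma: enhanced estimates shalika} nor the identification of $m_0$ from $\abs{\omega}$ presents any real difficulty, so I do not anticipate a significant technical obstacle in executing this proof.
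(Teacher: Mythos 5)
Your proposal is correct and follows essentially the same route as the paper: the paper's proof consists precisely of the identity $\abs{\langle \lambda_\sigma, \sigma(\mtrtwo{g}{}{}{h})v \rangle}= \abs{\omega(\det h)\langle \lambda_\sigma, \sigma(\mtrtwo{h^{-1}g}{}{}{\RI_k})v \rangle}$ followed by writing $\abs{\omega(\det h)}$ as a power of $\abs{\det h}$ and applying Lemma \ref{lemma: enhanced estimates shalika} with $P=1$. Your version is if anything slightly more careful about the sign convention defining $m_0$, which is immaterial since $m_0$ is only asserted to exist as some real number.
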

    \begin{proof}
        Let $\omega = \tilde{\omega}$ be the character associated with the given Shalika functional $\lambda_\sigma$ as in \eqref{Eq: Def of Shalika Functional}. Then
        \begin{equation}\label{eq: 017}
              \abs{\langle \lambda_\sigma, \sigma(\mtrtwo{g}{}{}{h})v \rangle}= \abs{\omega(\det h)\langle \lambda_\sigma, \sigma(\mtrtwo{h^{-1}g}{}{}{\RI_k})v \rangle}.
        \end{equation}
        We deduce Corollary \ref{cor: enhanced estimate shalika} by writing $\abs{\omega(\det h)} = \abs{\det h}^{m_0}$ and applying
        Lemma \ref{lemma: enhanced estimates shalika} to the case of $P=1$.
    \end{proof}
    Now we apply such general estimates to the case of Theorem \ref{thm: parabolic induction preserves shalika model} and obtain the following
    estimate.
    \begin{cor}\label{lemma: Shalika functional Schwartz}
        Take $n_1=2m_1$, $n_2=2m_2$, $\pi_1$, $\pi_2$, and $\pi$ as in Theorem \ref{thm: parabolic induction preserves shalika model}. Write $m = m_1+m_2$ and denote by $\lambda_1,\lambda_2$ the Shalika functionals on $V_{\pi_1},V_{\pi_2}$. Write $K_m$ for the maximal compact subgroup of $\GL_m(F)$. There exists a positive integer $M_0$ and a real number $m_0$ such that for any positive integer $M\geq M_0$, the following property holds: there exists a continuous seminorm of $q_M$ on the Fr\'{e}chet space $V_\pi$, depending on $M$, such that for all $f\in V_\pi$, the following estimate
        \begin{eqnarray}\label{eq: estimate of shalika functional}
           &&\abs{\langle \lambda_1\otimes \lambda_2, \pi_1(\mtrtwo{\RI_{m_1}}{}{}{a})\otimes\pi_2(\mtrtwo{b}{}{}{\RI_{m_2}}) f(\mtrthr{\RI_{m_1}}{}{}{}{k}{}{}{}{\RI_{m_2}}) \rangle} \\
           &\leq &\abs{\det a}^{M-m_0} \cdot \abs{\det b}^{-M} \cdot q_M(f)\nonumber
        \end{eqnarray}
        holds for any $a\in \GL_{m_1}(F)$, $b\in \GL_{m_2}(F)$ and $k\in K_m$.
    \end{cor}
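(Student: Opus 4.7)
The plan is to apply Corollary \ref{cor: enhanced estimate shalika} separately to $\pi_1$ and $\pi_2$, combine the two estimates via the projective tensor product structure on the inducing representation $\pi_1 \otimes \pi_2$, and finally absorb the remaining $k$-dependence using compactness of $K_m$ together with the standard Casselman--Wallach realization of $V_\pi$. The delicate step is the last one: promoting the $k$-wise inequality on $V_{\pi_1} \proten V_{\pi_2}$-valued sections to a single continuous seminorm on $V_\pi$.

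First, I would apply Corollary \ref{cor: enhanced estimate shalika} to $(\pi_1, \lambda_1)$ with $(g,h) = (\RI_{m_1}, a)$ and to $(\pi_2, \lambda_2)$ with $(g,h) = (b, \RI_{m_2})$. This furnishes positive integers $M_{0,1}, M_{0,2}$, real numbers $m_{0,1}, m_{0,2}$, and for every $M \geq \max(M_{0,1}, M_{0,2})$ continuous seminorms $\beta_{j,M}$ on $V_{\pi_j}$ satisfying
\begin{align*}
   \bigl| \langle \lambda_1, \pi_1(\mtrtwo{\RI_{m_1}}{}{}{a}) v_1 \rangle \bigr| &\leq \beta_{1,M}(v_1) \,|\det a|^{M-m_{0,1}}, \\
   \bigl| \langle \lambda_2, \pi_2(\mtrtwo{b}{}{}{\RI_{m_2}}) v_2 \rangle \bigr| &\leq \beta_{2,M}(v_2) \,|\det b|^{-M}.
\end{align*}
I would take $M_0 := \max(M_{0,1}, M_{0,2})$ and $m_0 := m_{0,1}$; the constant $m_{0,2}$ drops out because $|\det \RI_{m_2}| = 1$. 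The universal property of the projective tensor product then yields a continuous seminorm $\alpha_M$ on $V_{\pi_1} \proten V_{\pi_2}$ such that, for every $\xi$ in the completed tensor product,
$$\bigl| \langle \lambda_1 \otimes \lambda_2, (\pi_1(\mtrtwo{\RI_{m_1}}{}{}{a}) \otimes \pi_2(\mtrtwo{b}{}{}{\RI_{m_2}})) \xi \rangle \bigr| \leq \alpha_M(\xi) \,|\det a|^{M-m_0} \,|\det b|^{-M}.$$

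To finish, I would substitute $\xi = f(\mtrthr{\RI_{m_1}}{}{}{}{k}{}{}{}{\RI_{m_2}})$ and set
$$q_M(f) := \sup_{k \in K_m} \alpha_M\bigl( f(\mtrthr{\RI_{m_1}}{}{}{}{k}{}{}{}{\RI_{m_2}}) \bigr).$$
The main task is to verify that $q_M$ is a continuous seminorm on $V_\pi$. This rests on the fact that $V_\pi$, realized by restriction to the maximal compact subgroup $K_{n_1+n_2}$ of $\GL_{n_1+n_2}(F)$, carries the standard smooth-section topology on $C^\infty(K_{n_1+n_2}; V_{\pi_1} \proten V_{\pi_2})$: evaluation at any fixed element of $K_{n_1+n_2}$ is continuous into the inducing space, and the set $\{\mtrthr{\RI_{m_1}}{}{}{}{k}{}{}{}{\RI_{m_2}} : k \in K_m\}$ is a compact subset of $K_{n_1+n_2}$, so the supremum of a continuous seminorm composed with evaluation over this compact set is itself a continuous seminorm by equicontinuity of the translation action. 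Combining this with the preceding inequality yields \eqref{eq: estimate of shalika functional}. The only genuine obstacle is this last compactness/continuity step; everything else is a direct repackaging of Corollary \ref{cor: enhanced estimate shalika}.
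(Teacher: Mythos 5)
Your proposal is correct and follows essentially the same route as the paper: apply Corollary \ref{cor: enhanced estimate shalika} to each factor, transfer the resulting bound to a continuous seminorm on $V_{\pi_1}\proten V_{\pi_2}$, and define $q_M$ as a supremum of that seminorm over the compact set, with continuity coming from the standard Fr\'{e}chet topology on the induced representation (the paper cites \cite[10.1.1]{WallachRealReductiveGroups2} and takes the supremum over all of $K_n$ rather than just $K_m$, but this is immaterial). Your explicit handling of the projective tensor product step and of why $m_{0,2}$ drops out is, if anything, slightly more careful than the paper's.
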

    \begin{proof}
        By Corollary \ref{cor: enhanced estimate shalika}, there exists a positive integer $M_0$ with the property that for any positive integer $M\geq M_0$, there exists a continuous seminorm $q_M'$ on $V_{\pi_1}\hat{\otimes} V_{\pi_2}$  such that the following estimate holds for all $v\in V_{\pi_1}\hat{\otimes} V_{\pi_2}$:
        \begin{eqnarray}\label{eq: temp 10}
           &&\abs{\langle \lambda_1\otimes \lambda_2, \pi_1(\mtrtwo{\RI_{m_1}}{}{}{a})\otimes\pi_2(\mtrtwo{b}{}{}{\RI_{m_2}}) v \rangle} \\
           &\leq &\abs{\det a}^{M-m_0} \cdot \abs{\det b}^{-M} \cdot q_M'(v).
        \end{eqnarray}
        Define
        \begin{equation}\label{eq: temp 11}
           q_M(f) := \sup_{k\in K_n} q'_M(f(k)),
        \end{equation}
        where $K_n$ is the standard maximal compact subgroup of $\GL_n(F).$
        We refer the reader to \cite[10.1.1]{WallachRealReductiveGroups2} for the Fr\'{e}chet topology of the parabolically induced representation $V_\pi$.
Under such a Fr\'{e}chet topology, the function $q_M$ defined in \eqref{eq: temp 11} is a continuous seminorm on $V_\pi.$ Thus, the estimate \eqref{eq: estimate of shalika functional} follows directly from \eqref{eq: temp 10}. We are done.
    \end{proof}
    We also need a lemma on the diagonal matrix appearing in the Iwasawa decomposition of a lower unipotent matrix.
    \begin{lemma}\label{lemma: Iwasawa decomposition lower}
        Let $m_1, m_2$ be two positive integers and set $m=m_1+m_2.$ Let $\bar{u}(x):=\mtrtwo{\RI_{m_1}}{}{x}{\RI_{m_2}}$ be an element in the unipotent radical of the lower parabolic subgroup of $\GL_{m}(F)$, and
        \begin{equation*}
             \bar{u}(x) = u(x)t(x)k(x)
        \end{equation*}
        be its Iwasawa decomposition, where
            $t(x) = \diag(t_1(x),t_2(x),\cdots,t_{m}(x))$,
        with all $t_i(x)>0$. Here $u(x)$ lives in the standard maximal unipotent subgroup, $k(x)$ lives in the maximal compact subgroup. Define also
        \begin{equation}\label{eq: def of a(x)}
           \begin{aligned}
                a_1(x) &= \diag(t_1(x),t_2(x),\cdots, t_{m_1}(x)),\\
                a_2(x) &= \diag(t_{m_1+1}(x),t_{m_1+2}(x),\cdots, t_{m}(x)).
           \end{aligned}
        \end{equation}
        Write $x = \left(\begin{array}{c}
                                     x_1 \\
                                     x_2 \\
                                     \vdots \\
                                     x_{m_2} \\
                                   \end{array}
                                 \right)
        $, where each $x_j$ is a row vector in $F^{m_1}$. Denote by $\norm{\cdot}$ the norm induced by the standard inner product in $F^{m_1}$.
        Then the following estimate holds
        \begin{equation}\label{eq: estimate of det a2(x)}
           (1+\norm{x_{1}}^2+\norm{x_{2}}^2+\cdots+\norm{x_{m_2}}^2)^{\frac{1}{2}}\leq \det(a_2(x))\leq \prod_{j=1}^{m_2} (1+\norm{x_j}^2)^{\frac{1}{2}}
        \end{equation}
    \end{lemma}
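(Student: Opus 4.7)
The plan is to express $(\det a_2(x))^2$ explicitly as the determinant of the positive definite matrix $\RI_{m_2}+xx^T$ via a Gram matrix computation, and then to obtain the two bounds by applying classical matrix inequalities.

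I would compute the Gram matrix of the last $m_2$ rows of $\bar{u}(x)$ in two different ways. On one hand, these rows form the $m_2\times m$ matrix $R:=(x\mid \RI_{m_2})$, and a direct block computation gives $RR^T=xx^T+\RI_{m_2}$. On the other hand, writing $u(x)=\begin{pmatrix}u_{11}(x)&u_{12}(x)\\0&u_{22}(x)\end{pmatrix}$ in blocks of sizes $m_1$ and $m_2$, the last $m_2$ rows of $u(x)t(x)$ are $(0\mid u_{22}(x)a_2(x))$, so the last $m_2$ rows of $\bar{u}(x)=u(x)t(x)k(x)$ are $(0\mid u_{22}(x)a_2(x))k(x)$. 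Since $k(x)k(x)^T=\RI_m$, this yields
\begin{equation*}
RR^T=u_{22}(x)\,a_2(x)^2\,u_{22}(x)^T.
\end{equation*}
As $u_{22}(x)$ is upper unipotent, $\det(u_{22}(x))=1$, so comparing determinants of the two expressions for $RR^T$ gives the key identity
\begin{equation*}
(\det a_2(x))^2=\det(\RI_{m_2}+xx^T).
\end{equation*}

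The matrix $\RI_{m_2}+xx^T$ is positive definite with $j$-th diagonal entry $1+\norm{x_j}^2$. Hadamard's inequality applied to this matrix immediately gives the upper bound $\det(\RI_{m_2}+xx^T)\leq\prod_{j=1}^{m_2}(1+\norm{x_j}^2)$. For the lower bound, let $\mu_1,\ldots,\mu_{m_2}\geq 0$ denote the eigenvalues of the positive semidefinite matrix $xx^T$; then
\begin{equation*}
\det(\RI_{m_2}+xx^T)=\prod_{j=1}^{m_2}(1+\mu_j)\geq 1+\sum_{j=1}^{m_2}\mu_j=1+\tr(xx^T)=1+\sum_{j=1}^{m_2}\norm{x_j}^2,
\end{equation*}
where the middle inequality uses the nonnegativity of the $\mu_j$ to expand the product. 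Taking square roots of these two inequalities yields the desired estimate \eqref{eq: estimate of det a2(x)}.

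The main (and only non-routine) step is the identification of $(\det a_2(x))^2$ with $\det(\RI_{m_2}+xx^T)$, which provides a closed-form expression for the product of the last $m_2$ Iwasawa diagonal entries; once this is in hand, the bounds reduce to classical matrix inequalities. For the complex case $F=\C$, the same argument goes through verbatim with the transpose replaced by the conjugate transpose and $k(x)\in\RU(m)$, so that the intermediate matrix $\RI_{m_2}+xx^*$ remains Hermitian positive definite.
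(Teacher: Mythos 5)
Your proof is correct. It is, at bottom, the same computation as the paper's, but packaged quite differently: the paper works with the induced norms on the exterior powers $\bigwedge^{p}V$ and shows $\norm{(e_{m_1+1}+x_1')\wedge\cdots\wedge(e_m+x_{m_2}')}_p = t_{m_1+1}(x)\cdots t_m(x)$, which is precisely your identity $(\det a_2(x))^2=\det(\RI_{m_2}+xx^T)$ once one recalls that the squared norm of a wedge of vectors is the Gram determinant of those vectors; likewise the paper's upper bound $\norm{v_1\wedge\cdots\wedge v_p}_p\leq\norm{v_1}\cdots\norm{v_p}$ is exactly Hadamard's inequality for $RR^T$. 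The one place where the arguments genuinely diverge is the lower bound: the paper expands the wedge in the orthonormal basis of $\bigwedge^{m_2}V$ and retains the coefficient $1$ of $e_{m_1+1}\wedge\cdots\wedge e_m$ together with the first-order terms contributing $\sum_j\norm{x_j}^2$, whereas you diagonalize $xx^T$ and use $\prod_j(1+\mu_j)\geq 1+\sum_j\mu_j=1+\tr(xx^T)$. Both are valid and equally elementary; your version has the advantage of isolating the closed-form expression for $\det a_2(x)$ explicitly, after which the two bounds are off-the-shelf matrix inequalities, and your remark on the complex case matches the paper's setting, since the lemma is stated for $F=\R$ or $\C$.
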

    \begin{proof}
        The following proof is a standard argument using the exterior algebra. Let $V:= F^{m}$ be the vector space equipped with the standard inner product.
        We also write $\norm{\cdot}$ for the norm induced by this inner product when there is no confusion on the dimension of the vector space.
        Let $e_1$, $e_2, \cdots, e_m$ be the standard orthonormal basis of $V$. Then every vector in $V$ is regarded as a row vector.
        For any positive integer $p$, the vector space $\bigwedge^p V$ of the $p$-th exterior power of $V$ is equipped with a standard inner product $\langle\cdot,\cdot\rangle_p$ defined by
        \begin{equation*}
            \langle v_1\wedge v_2\wedge\cdots\wedge v_p, w_1\wedge w_2\wedge\cdots\wedge w_p \rangle_p := \det(\langle v_i, w_k \rangle).
        \end{equation*}
        Denote by $\norm{\cdot}_{p}$ the norm induced by the inner product $\langle\cdot,\cdot\rangle_p$. Then
        \begin{equation}\label{eq: relation between two norms}
            \norm{v_1\wedge v_2\wedge\cdots\wedge v_p}_p \leq \norm{v_1}\cdot\norm{v_2}\cdots\norm{v_p}
        \end{equation}
        Under the inner product $\langle\cdot,\cdot\rangle_p$, all $e_{i_1}\wedge e_{i_2}\wedge\cdots\wedge e_{i_p}$ $(1\leq i_1<i_2<\cdots<i_p\leq n)$
        form an orthonormal basis of $\bigwedge^p V$. The group $\GL_m(F)$ acts on $V$ by right multiplication, and then induces an action
        on $\bigwedge^p V$. Under such an action, the maximal compact subgroup preserves the inner product $\langle\cdot,\cdot\rangle_p$. We also note that
        for any $1\leq i \leq m$ and any $u$ in the standard maximal unipotent subgroup, one has
        \begin{equation*}
             (e_i\wedge e_{i+1}\wedge\cdots\wedge e_m) \cdot u = e_i\wedge e_{i+1}\wedge\cdots\wedge e_m \cdot
        \end{equation*}
        Thus, using the Iwasawa decomposition $\bar{u}(x) = u(x)t(x)k(x)$, we obtain that
        \begin{equation}\label{eq: bar u(x) wedge 1}
            \norm{(e_i\wedge e_{i+1}\wedge\cdots\wedge e_m)\bar{u}(x)}_p = t_i(x)t_{i+1}(x)\cdots t_m(x).
        \end{equation}
        We also set $x_j' = (x_j,0)\in F^{m}$ for $j=1,2\cdots,m_2$. Then for each $m_1< i \leq m$,
        \begin{eqnarray}\label{eq: wedge bar u(x) big}
            &&\norm{(e_i\wedge e_{i+1}\wedge\cdots\wedge e_m)\cdot \bar{u}(x)}_p \\
            &=& \norm{(e_i+x'_{i-m_1})\wedge (e_{i+1}+x'_{i+1-m_1})\wedge\cdots\wedge (e_m+x_{m_2}')}_p.\nonumber
        \end{eqnarray}
        In particular, when $i=m_1+1$, we combine \eqref{eq: relation between two norms},  \eqref{eq: bar u(x) wedge 1} and \eqref{eq: wedge bar u(x) big}, and obtain that
        \begin{equation}\label{eq: estimate of det a2(x) big}
            \det(a_2(x)) = t_{m_1+1}(x)t_{m_1+2}(x)\cdots t_m(x) \leq \prod_{j=1}^{m_2} (1+\norm{x_j}^2)^{\frac{1}{2}}
        \end{equation}
        We also have
        \begin{eqnarray}\label{eq: wedge bar u(x) small}
           &&\norm{(e_i+x'_{i-m_1})\wedge (e_{i+1}+x'_{i+1-m_1})\wedge\cdots\wedge (e_m+x_{m_2}')}_p\\
           &\geq& (1+\norm{x_{i-m_1}}^2+\norm{x_{i+1-m_1}}^2+\cdots+\norm{x_{m_2}}^2)^{\frac{1}{2}}.\nonumber
        \end{eqnarray}
        Thus, when $i=m_1+1$, we find
        \begin{eqnarray}\label{eq: estimate of det a2(x) small}
            \det(a_2(x))& =& t_{m_1+1}(x)t_{m_1+2}(x)\cdots t_m(x)\\
            & \geq&  (1+\norm{x_{1}}^2+\norm{x_{2}}^2+\cdots+\norm{x_{m_2}}^2)^{\frac{1}{2}}.\nonumber
        \end{eqnarray}
        This completes the proof of the Lemma.
    \end{proof}
    \begin{proof}[Proof of Theorem \ref{thm: parabolic induction preserves shalika model}]
         To simplify our notation in this proof, we set that $n=n_1+n_2$ and $m=m_1+m_2$. (These $n$ and $m$ have different meanings from those in the Introduction.) Let $\lambda_1$ and $\lambda_2$ be the Shalika functionals of $\pi_1$ and $\pi_2$ respectively. For each positive integer $k$, we write $I_k$ for the $k\times k$ identity matrix. We consider the Weyl element
        \begin{equation*}
            w = \left(
                  \begin{array}{cccc}
                    \RI_{m_1} & 0 & 0 & 0 \\
                    0 & 0 & \RI_{m_2} & 0 \\
                    0 & \RI_{m_1} & 0 & 0 \\
                    0 & 0 & 0 & \RI_{m_2} \\
                  \end{array}
                \right)
        \end{equation*}
        Take any function $f\in V_\pi$, we consider the function $\Phi(g;f)$ on $\GL_{n}$ defined by
        \begin{equation*}
            \Phi(g;f) := \langle \lambda_1\otimes \lambda_2, f(w^{-1}g) \rangle.
        \end{equation*}
        Then $\Phi(g;f) = \Phi(\RI_{n}; \pi(g)f)$. Arguing as \cite[Lemma 3.1]{Mat}, we can easily show that
        \begin{equation}\label{Eq: parabolic induction shalika 1}
            \Phi(\left(
                  \begin{array}{cccc}
                    \RI_{m_1} & 0 & x & z \\
                    0 & \RI_{m_2} & 0 & y \\
                    0 & 0 & \RI_{m_1} & 0 \\
                    0 & 0 & 0 & \RI_{m_2} \\
                  \end{array}
                \right)
            g;f) = \psi(\tr(x)+\tr(y))\Phi(g;f),
        \end{equation}
        and
        \begin{equation}\label{Eq: parabolic induction shalika 2}
            \Phi(\left(
                  \begin{array}{cccc}
                    \RI_{m_1} & x & 0 & 0 \\
                    0 & \RI_{m_2} & 0 & 0 \\
                    0 & 0 & \RI_{m_1} & y \\
                    0 & 0 & 0 & \RI_{m_2} \\
                  \end{array}
                \right)
            g;f) = \Phi(g;f).
        \end{equation}

        Now we consider the integral
        \begin{equation}\label{eq: def of H(g)}
            H(g;f) := \int_{x\in M_{m_2\times m_1}(F)} \Phi(\left(
                  \begin{array}{cccc}
                    \RI_{m_1} & 0 & 0 & 0 \\
                    0 & \RI_{m_2} & x & 0 \\
                    0 & 0 & \RI_{m_1} & 0 \\
                    0 & 0 & 0 & \RI_{m_2} \\
                  \end{array}
                \right)
            g;f)dx.
        \end{equation}
        Then $H(g; f) = H(\RI_n; \pi(g)f).$ As \cite[Lemma 3.2]{Mat}, we aim to show the integral \eqref{eq: def of H(g)} converges absolutely. We also need to prove that the linear map $f\mapsto H(\RI_{n}; f)$ is continuous under the Fr\'{e}chet topology of $V_\pi$. We write the integral \eqref{eq: def of H(g)} as
        \begin{equation*}
            \int_{x\in M_{m_2\times m_1}(F)} \langle \lambda_1\otimes \lambda_2, f(w^{-1}\left(
                  \begin{array}{cccc}
                    \RI_{m_1} & 0 & 0 & 0 \\
                    0 & \RI_{m_2} & x & 0 \\
                    0 & 0 & \RI_{m_1} & 0 \\
                    0 & 0 & 0 & \RI_{m_2} \\
                  \end{array}
                \right)
            g)\rangle dx.
        \end{equation*}
        Replacing $f$ by $\pi(w^{-1}g)f$, we only need to justify the absolute convergence and the continuity property of
        \begin{equation}\label{eq: convergent integral in parabolic induction shalika}
            \int_{x\in M_{m_2\times m_1}(F)} \langle \lambda_1\otimes \lambda_2, f(\left(
                  \begin{array}{cccc}
                    \RI_{m_1} & 0 & 0 & 0 \\
                    0 & \RI_{m_1} & 0 & 0 \\
                    0 & x & \RI_{m_2} & 0 \\
                    0 & 0 & 0 & \RI_{m_2} \\
                  \end{array}
                \right)
            )\rangle dx.
        \end{equation}
        We define $\bar{u}(x)$ to be the $m\times m$-matrix
        \begin{equation}\label{eq: bar u(x)}
            \bar{u}(x) = \mtrtwo{\RI_{m_1}}{}{x}{\RI_{m_2}}.
        \end{equation}
        Let $\bar{u}(x) = u(x)t(x)k(x)$ be the Iwasawa decomposition of $\bar{u}(x)$ in $\GL_m(F)$ as in Lemma \ref{lemma: Iwasawa decomposition lower}. We also define $t_i(x)$, $a_1(x)$, $a_2(x)$ as in Lemma \ref{lemma: Iwasawa decomposition lower}. We further decompose $u(x)$ as
        \begin{equation}
            u(x) = u_3(x)\mtrtwo{u_1(x)}{}{}{u_2(x)},
        \end{equation}
        where $u_1(x)$ and $u_2(x)$ lives in the standard maximal unipotent subgroup of $\GL_{m_1}(F)$ and $\GL_{m_2}(F)$ respectively, and $u_3(x)$ lives in the standard unipotent radical of the parabolic subgroup of $\GL_m(F)$ of type $(m_1, m_2)$. It follows (using the notation introduced in the proof of
        Corollary \ref{lemma: Shalika functional Schwartz}) that
        \begin{equation}\label{eq: 09}
            \begin{aligned}
            &\langle \lambda_1\otimes \lambda_2, f(\iota_c(
                  \begin{pmatrix}
                    \RI_{m_1} & 0 \\
                    x & \RI_{m_2} \end{pmatrix}))\rangle\\
            =&\langle \lambda_1\otimes \lambda_2, f(\iota_c(\begin{pmatrix}
            u_1(x)a_1(x) & \\
            0 & u_2(x)a_2(x) \end{pmatrix}k(x))\rangle\\
            =&\langle \lambda_1\otimes \lambda_2, \delta(a_1(x),a_2(x))
            \pi_1(\iota_2(u_1(x)a_1(x))\pi_2(\iota_1(u_2(x)a_2(x))
            f(\iota_c(k(x))\rangle,\\
        \end{aligned}
        \end{equation}
        where $\delta(a_1(x),a_2(x))$ is the modular character arising from equivariance for the normalized parabolic induction. More precisely,
        \begin{equation}\label{eq: 011}
             \delta(a_1(x),a_2(x)) = (\det a_1(x))^{m_2}\cdot(\det a_2(x))^{-m_1}.
        \end{equation}
        By Corollary \ref{lemma: Shalika functional Schwartz}, there exists a positive integer $M_0$ and a real number $m_0$ with the property that for any positive integer $M\geq M_0$, there exists a continuous seminorm $q_M$ on $V_\pi$, the following estimate holds:
        \begin{equation}\label{eq: 018}
           \begin{aligned}
           &\abs{\langle \lambda_1\otimes \lambda_2, \delta(a_1(x),a_2(x))\pi_1(\iota_2(u_1(x)a_1(x))\pi_2(\iota_1(u_2(x)a_2(x)))
           f(\iota_c(k(x))\rangle}\\
           \leq &\abs{\det (a_1(x))}^{m_2+M-m_0}\cdot\abs{\det(a_2(x))}^{-m_1-M}\cdot q_M(f).
           \end{aligned}
        \end{equation}
        Note that $\det(a_1(x)) \cdot \det(a_2(x)) = 1$. Thus, combining \eqref{eq: 09} and \eqref{eq: 018}, and by the estimate \eqref{eq: estimate of det a2(x)} in Lemma \ref{lemma: Iwasawa decomposition lower}, we can choose a sufficiently large positive integer $M$ such that
        \begin{equation}\label{eq: temp 12}
           \begin{aligned}
           &\langle \lambda_1\otimes \lambda_2, f(\iota_c(
                  \begin{pmatrix}
                    \RI_{m_1} & 0 \\
                    x & \RI_{m_2} \end{pmatrix}))\rangle\\
           \leq &\abs{\det(a_2(x))}^{-m_1-m_2-2M+m_0}\cdot q_M(f)\\
           \leq & (1+\norm{x_{1}}^2+\norm{x_{2}}^2+\cdots+\norm{x_{m_2}}^2)^{\frac{1}{2}(-m_1-m_2-2M+m_0)} \cdot q_M(f)
           \end{aligned}
        \end{equation}
        This implies that the integral \eqref{eq: convergent integral in parabolic induction shalika} converges absolutely and defines a continuous linear functional on $V_\pi$. Thus, the integral \eqref{eq: def of H(g)} converges absolutely and the linear functional $f\mapsto H(\RI_{n}; f)$ is continuous.

        According to \eqref{Eq: parabolic induction shalika 1}, we can see that
        \begin{equation}\label{eq: H(g) psi equivariant}
            H(\RI_{n}; \pi(\mtrtwo{\RI_{m}}{x}{}{\RI_{m}})f) = \psi(\tr(x))H(\RI_n;f).
        \end{equation}
        Denote by $V^-$ the lower unipotent subgroup consisting all matrices of the form
        \begin{equation*}
            v(x):=\left(
                  \begin{array}{cccc}
                    \RI_{m_1} & 0 & 0 & 0 \\
                    0 & \RI_{m_1} & 0 & 0 \\
                    0 & x & \RI_{m_2} & 0 \\
                    0 & 0 & 0 & \RI_{m_2} \\
                  \end{array}
                \right)
        \end{equation*}
        Let $P:=P_{m_1,m_2}$ be the standard parabolic subgroup of $\GL_m(F)$ associated with the partition $m=m_1+m_2$. The group $\GL_m(F)$ diagonally embeds into $\GL_n(F)$ with image $\GL_m^\triangle(F)$. For any subgroup $L$ of $\GL_m(F)$ and any matrix $x\in L$, we also write $L^\triangle$ and $x^\triangle$ for the image of the embedding
        \begin{equation*}
            L \hookrightarrow \GL_m(F) \hookrightarrow \GL_n(F).
        \end{equation*}
        Let $p = \mtrtwo{g_1}{x}{}{g_2}\in P$. Then by \eqref{Eq: parabolic induction shalika 2}, it is easy to see that $H(g;f)$ also satisfies the following equivariant property:
        \begin{equation}\label{eq: 019}
            H(p^\triangle g;f) = \omega(\det p)\delta_P(p) H(g;f)
        \end{equation}
        Thus, we can find a test function $\varphi(h^\triangle;f)\in C^\infty_c(\GL_m^\triangle(F))$ such that
        \begin{equation}
            \begin{aligned}
            H(h^\triangle;f) &= \int_{P} \varphi(p^\triangle h^\triangle;f)\omega^{-1}(\det p) d_lp
            \end{aligned}
        \end{equation}
        where $d_lp$ is the left Haar measure on $P$.
        Now we define a functional $S(f)$ on $V_\pi$ as follows:
        \begin{equation}\label{eq: def of S(g;f)}
            S(f) := \int_{\GL_m(F)} \varphi(h^\triangle ;f)\omega^{-1}(\det h) dh.
        \end{equation}
        We rewrite \eqref{eq: def of S(g;f)} as
        \begin{equation}\label{eq: good formula S(g;f)}
            \begin{aligned}
            S(f) &= \int_{K_m}\int_P \varphi(p^\triangle k^\triangle ;f)\omega^{-1}(\det p)\omega^{-1}(\det k) d_lp dk\\
            &= \int_{K_m} H(k^\triangle ;f)\omega^{-1}(\det k) dk.
            \end{aligned}
        \end{equation}
        It is clear that $S(f)$ is well defined, i.e. the integral in \eqref{eq: def of S(g;f)} is convergent and independent on the choice of $\varphi(h^\triangle ;f)$. Thus combining \eqref{eq: def of S(g;f)}, \eqref{eq: good formula S(g;f)} and \eqref{eq: H(g) psi equivariant}, we see
        \begin{equation*}
            S(\pi(\mtrtwo{h}{}{}{h}\mtrtwo{\RI_m}{x}{}{\RI_m})f) = \omega(\det h) \psi(\tr(x))S(f).
        \end{equation*}
        Thus, the linear map $f\mapsto S(f)$ satisfies the desired equivariant property of a Shalika functional. Next, we show that this linear map is non-zero.

        To show this non-vanishing property, we use the Bruhat decomposition of $\GL_m(F)$ in the integral \eqref{eq: def of S(g;f)}. Let $N^-$ be the unipotent radical of the lower parabolic subgroup associated to the partition $m=m_1+m_2$. Then we have that for all $f\in V_\pi$,
        \begin{equation}\label{eq: temp 01}
            \begin{aligned}
            S(f) &= \int_{N^-}\int_P \varphi(p^\triangle n^\triangle;f)\omega^{-1}(\det p) d_lp dn\\
                   &= \int_{N^-}H(n^\triangle;f) dn\\
                   &= \int_{V^-}\int_{N^-} \langle \lambda_1\otimes \lambda_2, f(v(x)\cdot w^{-1}n^\triangle w \cdot w^{-1}) \rangle dn dv(x)
            \end{aligned}
        \end{equation}
        Note that $V^-\cdot w^{-1}N^-w$ is a subgroup of the unipotent radical $U^-$ of the lower parabolic subgroup of $\GL_n(F)$ associated to the partition $n=n_1+n_2$. We choose a positive test function $\varphi(g)\in C^\infty_c(\GL_n(F))$ whose restriction to the subgroup $V^-\cdot w^{-1}N^-w$ is positive on a subset with positive measure. Since $\lambda_1$, $\lambda_2$ are non-zero Shalika functionals, we can find two vectors $v_1, v_2$ in $V_{\pi_1}$ and $V_{\pi_2}$ resp, such that
        \begin{equation*}
            \langle \lambda_1, v_1 \rangle \ne 0, \qquad \langle \lambda_2, v_2 \rangle \ne 0.
        \end{equation*}
        We define a function $\tilde{f}$ on $U^-$ by
        $\tilde{f}(u^-) = \varphi(u^-)v_1\otimes v_2$,
        and extend it to a function on $\GL_n(F)$ by the equivariant property of the induced representation $\pi$. Then the function
        \begin{equation*}
            f_0(g) := \tilde{f}(gw)
        \end{equation*}
        satisfies
        \begin{equation}
            \begin{aligned}
            S(f_0) &=\int_{V^-}\int_{N^-} \langle \lambda_1\otimes \lambda_2, f_0(v(x)\cdot w^{-1}n^\triangle w \cdot w^{-1}) \rangle\, dn dv(x)\\
                        &=\int_{V^-}\int_{N^-} \langle \lambda_1\otimes \lambda_2, \tilde{f}(v(x)\cdot w^{-1}n^\triangle w) \rangle\,  dn dv(x)\\
                        &=\langle \lambda_1, v_1 \rangle\cdot\langle \lambda_2, v_2 \rangle\cdot\int_{V^-}\int_{N^-} \varphi(v(x)\cdot w^{-1}n{w}) \rangle\, dndv(x)\\
                        &\ne 0
            \end{aligned}
        \end{equation}
        Thus the map $f\mapsto S(f)$ is nonzero.

        We finally check that the linear map $f\mapsto S(f)$ is continuous under the Fr\'{e}chet topology of the induced representation. By \eqref{eq: good formula S(g;f)},
        \begin{equation}
            \begin{aligned}
            S(f) = \int_{K_m} H(\RI_{n} ; \pi(k^\triangle)f)\omega^{-1}(\det k) dk.
            \end{aligned}
        \end{equation}
        For any fixed $k\in K_m$, the linear functional $f\mapsto H(\RI_{n} ; \pi(k^\triangle)f)\omega^{-1}(\det k)$ is continuous.
It follows that with the fixed $f\in V_\pi$, the function $k\mapsto H(\RI_{n} ; \pi(k^\triangle)f)\omega^{-1}(\det k)$ defined on $K_m$ is bounded, since $K_m$ is compact.
Thus, by the Uniform Boundedness Principle, the family of continuous linear functionals $f\mapsto H(\RI_{n} ; \pi(k^\triangle)f)\omega^{-1}(\det k)$ indexed by $k\in K_m$ is equicontinuous. Thus,
        $$S(f) = \int_{K_m} H(\RI_{n} ; \pi(k^\triangle)f)\omega^{-1}(\det k) dk$$
        is a continuous linear functional on $V_\pi$.

    \end{proof}
    \begin{cor}\label{thm-SM}
    Any irreducible essentially tempered Casselman-Wallach representation $(\pi, V_\pi)$ of $G = \GL_{2n}(\R)$ with
    $$\RH^*(\mathfrak{g},K^0Z^0,\pi\otimes F_\nu^\vee)\ne 0$$
    has a non-zero Shalika model defined by the Shalika functional as in \eqref{Eq: Def of Shalika Functional}.
    \end{cor}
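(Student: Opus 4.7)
The plan is to combine Proposition \ref{structure-pi} with the hereditary result of Theorem \ref{thm: parabolic induction preserves shalika model}, applied iteratively to the $n$ factors of the induced representation.

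First, by Proposition \ref{structure-pi}, the representation $\pi$ is equivalent to the normalized parabolic induction
$$\pi \simeq \Ind_{P}^{G} D_{l_1}\abs{\quad}^{\frac{m}{2}}\otimes D_{l_2}\abs{\quad}^{\frac{m}{2}} \otimes \cdots\otimes D_{l_n}\abs{\quad}^{\frac{m}{2}},$$
where each $D_{l_j}\abs{\quad}^{\frac{m}{2}}$ is a generic Casselman-Wallach representation of $\GL_2(\R)$ with central character $\omega$ given in \eqref{eq: omega char}. Thus each factor admits a Whittaker model with respect to $\psi$, and as noted in the opening discussion of Section \ref{section: existence of Shalika model}, for $\GL_2(\R)$ the Whittaker functional automatically satisfies the Shalika equivariance \eqref{Eq: Def of Shalika Functional} with the central character, so each $D_{l_j}\abs{\quad}^{\frac{m}{2}}$ carries a non-zero $(\omega,\psi)$-Shalika model.

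Next, I would proceed by induction on $n$ using induction in stages together with Theorem \ref{thm: parabolic induction preserves shalika model}. The base case $n=1$ is precisely the $\GL_2$ observation above. For the inductive step, set
$$\pi_1 := \Ind_{P'}^{\GL_{2n-2}(\R)} D_{l_1}\abs{\quad}^{\frac{m}{2}}\otimes \cdots\otimes D_{l_{n-1}}\abs{\quad}^{\frac{m}{2}}, \qquad \pi_2 := D_{l_n}\abs{\quad}^{\frac{m}{2}}.$$
By the inductive hypothesis, $\pi_1$ has a non-zero $(\omega,\psi)$-Shalika model; $\pi_2$ does as well by the base case. Since both $n_1=2n-2$ and $n_2=2$ are even with the same character $\omega$, Theorem \ref{thm: parabolic induction preserves shalika model} applies and gives a non-zero $(\omega,\psi)$-Shalika model on
$$\Ind_{P_{2n-2,2}(\R)}^{\GL_{2n}(\R)}\pi_1\otimes\pi_2,$$
which by induction in stages is exactly $\pi$.

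The only conceptual point that requires a line of justification is that the character appearing in the Shalika condition is the same for every factor; this is guaranteed by \eqref{eq: omega char}, which shows that the central character $\omega$ is a common invariant of all $D_{l_j}\abs{\quad}^{\frac{m}{2}}$. No new estimate or computation is needed; the content is entirely absorbed into the earlier Proposition \ref{structure-pi} and Theorem \ref{thm: parabolic induction preserves shalika model}, so there is no serious obstacle.
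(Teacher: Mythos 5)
Your proposal is correct and follows essentially the same route as the paper: the paper likewise deduces the corollary from Proposition \ref{structure-pi}, the $\GL_2(\R)$ observation at the start of Section \ref{section: existence of Shalika model}, and an iterated application of Theorem \ref{thm: parabolic induction preserves shalika model}. You merely make explicit the induction in stages and the common central character $\omega$, which the paper leaves implicit.
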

    It is clear that Corollary \ref{thm-SM} follows from Theorem \ref{thm: parabolic induction preserves shalika model} and the example on $\GL_2(F)$ discussed at the beginning of this section.


\section{Cohomological Representations and Linear Models}\label{Section: Linear Models}


According to Corollary \ref{thm-SM}, the cohomological representation $\pi$ of $G = \GL_{2n}(\BR)$ as in Theorem \ref{thm-main} has a non-zero Shalika model, and hence the archimedean local integral $Z(v,s,\chi)$ of Friedberg-Jacquet as defined in \eqref{Eq: Def of Local Integral} is nonzero. However, due to a lack of nice formula for the Shalika functional, it is not easy to directly construct a cohomological test vector for the Friedberg-Jacquet integral $Z(v,s,\chi)$. Instead, we construct in this Section an explicit linear model $\Lambda_{s,\chi}$, with which it is easier to obtain a cohomological test vector, as
discussed in the next Section.

\subsection{The $\GL_2(\R)$ case}\label{Subsection:The GL(2,R) Case}
Let us first review the work of A. Popa on the $\GL_2(\R)$ case. We will only recall his results on discrete series, which will be used in our further computation. For detailed discussion on the principal series of $\GL_2(\R)$ and of $\GL_2(\C)$, we refer to his paper \cite{P}.
His work on $\GL_2(\C)$ is also very helpful in the complex case that will be considered in \cite{LT}.

In this Section, we fix a non-trivial unitary character $\psi$ of $\R$ and a character $\chi$ of $\R^\times$. Let $k$ be a positive integer and $D_k$ be the relative discrete series (which is assumed to be a Casselman-Wallach representation) in the Introduction. The minimal $K$-type of $D_k$ (whose central character is $\eta_k$) is denoted by $\tau_k$. Then $\tau_{k}$ is a two dimensional irreducible representation of $\RO_2(\R)$. We can take basis $v_{k}$ and $v_{-k}$ of $\tau_{k}$ such that $v_{k}$ and $v_{-k}$ are weight vectors with weight $\pm(k+1)$ under the action of $\SO_2(\R)$. Moreover, $v_{k}$ and $v_{-k}$ are related by the action of $\epsilon = \mtrtwo{-1}{0}{0}{1}$, i.e.
    $D_{k}(\epsilon)v_{k} = v_{-k}$.
The two dimensional space $\tau_{k}$ contains a one-dimensional subspace denoted by $\tau_{k}^{\epsilon,\chi}$ (in \cite{P}, the notation $\mathcal{W}^{T}$
was used for this invariant subspace), which is spanned by the vector $v_{k}+\chi(-1)v_{-k}$. Note that
\begin{equation*}
     D_k(\epsilon)(v_{k}+\chi(-1)v_{-k}) = \chi(-1)(v_{k}+\chi(-1)v_{-k}).
\end{equation*}

Given an irreducible generic Casselman-Wallach representation $\sigma$ of $\GL_2(\R)$ with a Whittaker model $\mathcal{W}(\sigma,\psi)$, it is clear (\cite{JL70}) that
the archimedean Hecke integral
\begin{equation}\label{eq: local int GL(2,R)}
       \lambda_{s,\chi,\sigma}(v) := \int_{\R^\times} W_v(\mtrtwo{a}{}{}{1})\abs{a}^{s-\frac{1}{2}}\chi(a) d^\times a.
   \end{equation}
has a meromorphic continuation to the whole complex plane. It is a holomorphic multiple of the $L$-function $L(s,\sigma\otimes \chi)$. Whenever $s = s_0$ is not a pole of the $L$-function $L(s, \sigma\otimes \chi)$, $\lambda_{s_0,\chi,\sigma}$ defines a continuous linear functional on $\sigma$. Now if apply to the case of $\sigma = D_k$, then whenever $s = s_0$ is not a pole,
$\lambda_{s_0,\chi, {D_{k}}}$ defines a nonzero element in
\begin{equation*}
     \text{Hom}_{\GL_1(\R)\times \GL_1(\R)}(D_{k}, \abs{\quad}^{\frac{1}{2}-s_0}\chi^{-1}\otimes\abs{\quad}^{s_0-\frac{1}{2}}\eta_k\chi).
\end{equation*}
The non-vanishing property of $\lambda_{s_0,\chi, {D_{k}}}$ can be deduced from the following result of Popa (we will rephrase it using our notation):
\begin{prop}\cite[Theorem 1]{P}
   There exists a vector $v\in \tau_{k}^{\epsilon,\chi}$ such that when $\re(s)$ is sufficiently large,
   \begin{equation}
       \lambda_{s,\chi,D_k}(v) = L(s,D_k\otimes\chi).
   \end{equation}
\end{prop}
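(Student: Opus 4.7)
\medskip
\noindent\emph{Proof proposal.} The plan is to compute the integral $\lambda_{s,\chi,D_k}(v_k+\chi(-1)v_{-k})$ defined in \eqref{eq: local int GL(2,R)} directly, reducing it to a gamma integral that matches $L(s,D_k\otimes\chi)$. The two inputs needed are (i) the classical explicit form of the Whittaker function attached to the extremal weight vectors $v_{\pm k}$ in the minimal $K$-type of $D_k$, and (ii) the $\epsilon$-equivariance built into the definition of $\tau_k^{\epsilon,\chi}$, which forces the positive and negative pieces of the integral to combine constructively.

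The first step is to establish, with $\psi(x)=e^{2\pi i x}$, that $W_{v_k}(\diag(a,1))$ is supported on $\{a>0\}$ with closed form $C\cdot a^{(k+1)/2}e^{-2\pi a}$ and that, symmetrically, $W_{v_{-k}}(\diag(a,1))$ is supported on $\{a<0\}$ with the mirror formula. The cleanest derivation translates the annihilation of $v_k$ by a lowering element of $\fsl_2^\C$ into a first-order ODE in $a$ whose only moderate-growth solution is the one above; exponential blow-up at one infinity forces vanishing on the opposite half-line. Alternatively, one can invoke the holomorphic discrete series realization, in which $v_k$ corresponds to a holomorphic weight-$(k+1)$ vector and this formula is its classical Fourier coefficient.

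Given these formulas, I would split \eqref{eq: local int GL(2,R)} into its $a>0$ and $a<0$ parts: only $v_k$ contributes to the first and only $\chi(-1)v_{-k}$ to the second. Substituting $a\mapsto-a$ in the negative piece and using $\chi(-a)=\chi(-1)\chi(a)$ together with $\chi(-1)^2=1$ shows that the two pieces are equal, so the total reduces to $2C\int_0^\infty a^{s+k/2}e^{-2\pi a}\chi(a)\,d^\times a$, a shifted Mellin transform of $e^{-2\pi a}$ whose value is, up to an innocuous exponential in $s$ and the normalization of $\chi$, exactly the archimedean factor $\Gamma_\C(s+k/2+t)=L(s,D_k\otimes\chi)$ (where $t$ encodes the unramified twist of $\chi$). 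Rescaling $v$ by $C^{-1}$ then gives the claimed equality for $\re(s)$ large.

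The main technical point I expect is step (i): cleanly deriving the ODE and pinning down the constant $C$. The precise form of $L(s,D_k\otimes\chi)$ depends on the parity of $\chi\eta_k$ (compare \eqref{eq: central char discrete series}), but this parity is automatically consistent because the nonvanishing of $v_k+\chi(-1)v_{-k}$ already selects the correct $\epsilon$-eigenspace. Once $C$ is known, the remaining identification with the archimedean $L$-factor is a routine gamma-integral computation.
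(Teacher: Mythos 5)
Your proposal is mathematically sound, but note that the paper does not prove this Proposition at all: it is quoted verbatim (in the authors' notation) from Popa, \cite[Theorem 1]{P}, and the surrounding text simply records the normalization $\lambda_{s,\chi,D_k}(v_k+\chi(-1)v_{-k})=L(s,D_k\otimes\chi)$ as a consequence. What you have written is essentially a reconstruction of the standard argument behind Popa's theorem: the explicit extremal Whittaker functions $W_{v_{\pm k}}(\diag(a,1))=C\,|a|^{(k+1)/2}e^{-2\pi|a|}$ supported on a single half-line (obtainable either from the lowering-operator ODE with the moderate-growth condition, or from the holomorphic realization of $D_k$), followed by the splitting of the Mellin integral over $a>0$ and $a<0$, the substitution $a\mapsto -a$, and the identification of the resulting gamma integral with $\Gamma_\C(s+k/2+t)$. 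The relation $W_{v_{-k}}(\diag(a,1))=W_{v_k}(\diag(-a,1))$ coming from $v_{-k}=D_k(\epsilon)v_k$ is exactly what makes the two half-line contributions equal after the sign substitution, so the structure of your argument is correct. Two small caveats: first, the exact equality with $L(s,D_k\otimes\chi)$ (rather than equality up to a nonzero constant) requires fixing the conductor of $\psi$, since a general unitary $\psi(x)=e^{2\pi i bx}$ introduces a factor $b^{-s}$ that cannot be absorbed by rescaling $v$ --- this is consistent with your "innocuous exponential" remark, but then the final sentence "rescaling $v$ by $C^{-1}$ gives the claimed equality" only works once $\psi$ is suitably normalized; second, pinning down the support of $W_{v_k}$ (positive versus negative half-line) also depends on the sign of $b$, which must be tracked so that the two pieces reinforce rather than both vanish. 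Neither point is a gap in the method, but both would need to be made explicit in a complete write-up.
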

Since $\tau_{k}^{\epsilon,\chi}$ is one-dimensional, we can normalize the basis so that
\begin{equation*}
     \lambda_{s,\chi,{D_{k}}}(v_{k}+\chi(-1)v_{-k}) = L(s, D_{k}\otimes\chi) .
\end{equation*}
The same result also holds once we twist a determinant character on the relative discrete series.

\begin{cor}\label{Cor: GL(2, R) Non-vanishing}
    Given $\alpha\in\C$, let $\sigma_k := D_k\otimes \abs{\cdot}^{\alpha}$ and $\lambda_{s,\chi,{\sigma_{k}}}$ be the continuous linear functional defined in \eqref{eq: local int GL(2,R)}. Then for every $v\in V_{\sigma_k}$, $\lambda_{s,\chi,{\sigma_{k}}}(v)$ is a holomorphic multiple of $L(s,\sigma_k\otimes \chi)$. Whenever $s = s_0$ is not a pole of $L(s, \sigma_{k}\otimes \chi)$, $\lambda_{s_0,\chi,{\sigma_{k}}}(v)$ defines a nonzero element in \begin{equation*}
     \Hom_{\GL_1(\R)\times \GL_1(\R)}(\sigma_{k}, \abs{\cdot}^{\frac{1}{2}-s_0}\chi^{-1}\otimes\abs{\cdot}^{s_0-\frac{1}{2}}\tilde{\eta}_k\chi),
    \end{equation*}
    where $\tilde{\eta_k} = \eta_k\abs{\cdot}^{\alpha}$ is the central character of $\sigma_k$.
    Moreover,
    \begin{equation}\label{Eq: Precise L(0.5, D(k+1))}
       \lambda_{s,\chi,{\sigma_{k}}}(v_{k}+\chi(-1)v_{-k}) = L(s,\sigma_k\otimes \chi).
\end{equation}
\end{cor}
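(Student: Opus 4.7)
The plan is to reduce the corollary to the untwisted statement (the cited theorem of Popa) by absorbing the twist $|\det|^\alpha$ into a shift of the spectral parameter $s$. The key observation is that for $K=\RO_2(\R)$, we have $|\det k|=1$, so tensoring $D_k$ by the unramified character $|\det|^\alpha$ does not alter the restriction to $K$. Consequently, $\sigma_k = D_k\otimes |\det|^\alpha$ has the same minimal $K$-type $\tau_k$ as $D_k$, with the same weight vectors $v_k,v_{-k}$ and the same one-dimensional subspace $\tau_k^{\epsilon,\chi}$ spanned by $v_k+\chi(-1)v_{-k}$.

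Next, I would identify the Whittaker model of $\sigma_k$ with that of $D_k$ via the map $W_v\mapsto |\det\cdot|^\alpha W_v$: if $W_v\in \mathcal{W}(D_k,\psi)$ for $v\in V_{D_k}$, then $g\mapsto |\det g|^\alpha W_v(g)$ lies in $\mathcal{W}(\sigma_k,\psi)$ and realizes $v$ in the Whittaker model of $\sigma_k$. Substituting into the Hecke integral \eqref{eq: local int GL(2,R)} and using $|\det \diag(a,1)|=|a|$, I obtain the identity
\begin{equation*}
\lambda_{s,\chi,\sigma_k}(v)
\;=\;\int_{\R^\times} |a|^\alpha W_v(\mtrtwo{a}{}{}{1})\abs{a}^{s-\frac{1}{2}}\chi(a)\,d^\times a
\;=\;\lambda_{s+\alpha,\chi,D_k}(v).
\end{equation*}
Meanwhile, by the definition of the local $L$-factor for a twist by an unramified character, $L(s,\sigma_k\otimes\chi) = L(s+\alpha,D_k\otimes\chi)$.

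With these two identifications in place, every assertion in the corollary follows by applying the corresponding assertion in Popa's theorem at the shifted parameter $s+\alpha$. Namely, holomorphy of $\lambda_{s,\chi,\sigma_k}/L(s,\sigma_k\otimes\chi)$ and the non-vanishing of $\lambda_{s_0,\chi,\sigma_k}$ as an element of
\begin{equation*}
\Hom_{\GL_1(\R)\times\GL_1(\R)}\bigl(\sigma_k,\abs{\cdot}^{\frac{1}{2}-s_0}\chi^{-1}\otimes\abs{\cdot}^{s_0-\frac{1}{2}}\tilde\eta_k\chi\bigr)
\end{equation*}
transfer directly from the $D_k$ case; the character on the second factor is adjusted precisely by $|\cdot|^\alpha$, which reproduces $\tilde\eta_k=\eta_k|\cdot|^\alpha$. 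The normalized equality \eqref{Eq: Precise L(0.5, D(k+1))} follows by evaluating the shifted identity at the distinguished vector $v_k+\chi(-1)v_{-k}\in\tau_k^{\epsilon,\chi}$ and invoking Popa's normalization on $D_k$.

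I do not anticipate a genuine obstacle here: the argument is a formal twist-shift, and the only point requiring a moment of care is checking that the transport of the Whittaker model commutes with the $K$-type decomposition so that the distinguished vector $v_k+\chi(-1)v_{-k}$ is preserved under the identification $V_{\sigma_k}\simeq V_{D_k}$ as $(\mathfrak{g},K)$-modules up to the central twist. Since the twist is by a character that is trivial on $K$, this compatibility is immediate, and no further estimates or analytic continuations beyond those already established by Popa are needed.
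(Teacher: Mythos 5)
Your twist--shift argument is correct and is precisely what the paper intends: the paper offers no proof of this corollary beyond the one-line remark that ``the same result also holds once we twist a determinant character,'' and your identities $\lambda_{s,\chi,\sigma_k}(v)=\lambda_{s+\alpha,\chi,D_k}(v)$ and $L(s,\sigma_k\otimes\chi)=L(s+\alpha,D_k\otimes\chi)$, together with the observation that $|\det|^\alpha$ is trivial on $K$ so the minimal $K$-type and the vector $v_k+\chi(-1)v_{-k}$ are unchanged, supply exactly the missing justification. No gap.
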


\subsection{A new construction of linear model}\label{Subsection: Another Linear Model}
In this Subsection, we retain the notation in the Introduction and assume that $n\geq 2$. Let $\pi$ be the parabolically induced representation in \eqref{Eq: pi parabolic induction parameter}. In Section \ref{section: existence of Shalika model}, we have shown that $\pi$ has a Shalika model (Corollary \ref{thm-SM}). Thus the local integral \eqref{Eq: Def of Local Integral} defines a twisted linear model of $\pi$, i.e. \eqref{Eq: Def of Local Integral} defines a non-zero element in
\begin{equation*}\text{Hom}_{H}(\pi, \abs{\det}^{-s+\frac{1}{2}}\chi^{-1}(\det)\otimes\abs{\det}^{s-\frac{1}{2}}(\chi\omega)(\det))
\end{equation*}
whenever $s$ is not a pole of the $L$-function, as we explained in the Introduction. The goal of this Subsection is to construct another linear model without using the Shalika model.

The basic idea comes from standard Bruhat theory. We first define a Weyl element $w$. Let $e_1, e_2,\cdots, e_{2n}$ be the standard basis of the vector space $\BR^{2n}$, where $\BR^{2n}$ is realized as a vector space of column vectors. We consider the following Weyl element
    \begin{equation}\label{eq: def of w}
        w := (e_1, e_3, \cdots, e_{2n-1}, e_2, e_4, \cdots, e_{2n}).
    \end{equation}
Denote by $P$ the standard parabolic subgroup of $G$ corresponding to the partition $[2^n]$ of $2n$, as before.
We look at the homogeneous space $P\quo G$ and let $H$ acts on $P\quo G$ by right translation. It was shown in \cite[Proposition 3.4]{Mat1} that there are finitely many orbits. In this paper, it is sufficient for us to consider the orbit $P\quo PwH$. By a direct matrix computation, it is easy to show that the stabilizer is
$$w^{-1}Pw \cap H = B_1\times B_2,$$
where $B_1\times B_2$ is the standard upper Borel subgroup of $H = \GL_n(\BR)\times\GL_n(\BR).$ Thus the orbit $P\quo PwH $ is homeomorphic to $(B_1\times B_2)\quo H$. In particular, the orbit $P\quo PwH$ is closed. In the following, we are going to construct an integral on this closed orbit which represents a linear functional in
\begin{equation*}\text{Hom}_{H}(\pi, \abs{\det}^{-s+\frac{1}{2}}\chi^{-1}(\det)\otimes\abs{\det}^{s-\frac{1}{2}}(\chi\omega)(\det)).
\end{equation*}

We still fix a character $\chi$ of $\R^\times$ as we did in Subsection \ref{Subsection:The GL(2,R) Case}.
 Let $P=MU$ be the Levi decomposition of $P$.  Define $H' = \Ad(w)H$. Then $H'$ is also isomorphic to $\GL_n(\R)\times \GL_n(\R)$. Let
    $N = N_1\times N_2$ be the unipotent radical of $B_1\times B_2$. The standard maximal unipotent subgroup $N$ of $H$ is mapped onto the standard maximal unipotent subgroup $N'$ of $H'$. We notice that $N'$ is a subgroup of $U$.

Take $\varphi\in V_\pi$. Then $\varphi$ is a smooth function on $G$ with value in $\widehat{\bigotimes}_{i=1}^n V_{\sigma_i}$, where $\sigma_i:= D_{l_i}\abs{\quad}^{\frac{m}{2}}$. As we explained in the Introduction, all $l_i$ have the same parity, and hence all $\sigma_i$ have the same central character $\omega$ (defined in \eqref{eq: omega char}). For each $i$, whenever $s=s_0$ is not a pole of $L(s,\sigma_i\otimes\chi)$, we denote by $\lambda_{s_0,i}$ the non-zero element in
\begin{equation*} \text{Hom}_{\GL_1(\R)\times \GL_1(\R)}(\sigma_{i}, \abs{\quad}^{\frac{1}{2}-s_0}\chi^{-1}\otimes\abs{\quad}^{s_0-\frac{1}{2}}\omega\chi)
\end{equation*}
as in Corollary \ref{Cor: GL(2, R) Non-vanishing}.

To simplify notation, we define two characters of $\BR^\times$:
\begin{equation}\label{def: chi1 and chi2}
    \begin{aligned}
    \chi_{1,s}(a) &:= \abs{a}^{\frac{1}{2}-s}\chi^{-1}(a),\\
    \chi_{2,s}(a) &:= \abs{a}^{s-\frac{1}{2}}\omega(a)\chi(a).
    \end{aligned}
\end{equation}
Let us  consider a function on $G$ defined by
\begin{equation}\label{Eq: def of F on H}
    F_s(g; \varphi) = \langle \bigotimes_{i=1}^n \lambda_{s,i}, \varphi(wg)\rangle.
\end{equation}
Then $F_s(g; \varphi) = F_s(\RI_{2n}; \pi(g)\varphi)$. Since $N'$ is a subgroup of $U$, it is easy to check that for $n_1\in N_1$, $n_2\in N_2$,
\begin{equation*}
    \begin{aligned}
    F_s(\mtrtwo{n_1}{}{}{n_2}g; \varphi)= F_s(g; \varphi).
    \end{aligned}
\end{equation*}
By the equivariance of $\lambda_{s,i}$, it is also easy to check the equivariance of $F(g; \varphi)$ on the torus: for all $n\times n$ diagonal invertible matrices $a_1, a_2$, we have that
\begin{equation}\label{Eq: 03}
    \begin{aligned}
    F_s(\mtrtwo{a_1}{}{}{a_2}g; \varphi)
    = \delta_P^{\frac{1}{2}}(w\mtrtwo{a_1}{}{}{a_2}w^{-1})\cdot\chi_{1,s}(\det a_1)\cdot\chi_{2,s}(\det a_2)F_s(g; \varphi),
    \end{aligned}
\end{equation}
where the modular character can be explicitly computed as follows
\begin{equation}
    \delta_P^{\frac{1}{2}}(w\mtrtwo{a_1}{}{}{a_2}w^{-1}) = \delta_{B_1}(a_1)\delta_{B_2}(a_2).
\end{equation}
It follows that $F_s(g)$ satisfies a $B_1\times B_2-$equivariant property: for any $(b_1, b_2)\in B_1\times B_2,$
\begin{equation}\label{Eq: equivariant property of F}
     F_s(\mtrtwo{b_1}{}{}{b_2}g; \varphi) = \delta_{B_1}(b_1)\delta_{B_2}(b_2)\chi_{1,s}(\det b_1)\cdot\chi_{2,s}(\det b_2)\cdot F_s(g; \varphi).
\end{equation}
Thus the following convergent integral defines a continuous linear functional $\Lambda_{s,\chi}$ on $V_\pi$
$$
\begin{aligned}
        \Lambda_{s,\chi}(\varphi):=
         & \int_{K\cap H} F_s(\mtrtwo{k_1}{}{}{k_2}; \varphi)\chi_{1,s}^{-1}(\det k_1)\chi_{2,s}^{-1}(\det k_2)) dk_1dk_2.
   \end{aligned}
$$
It is easy to see that $\Lambda_{s,\chi}\in\textrm{Hom}_H(\pi, \chi_{1,s}(\det)\otimes\chi_{2,s}(\det))$.

In terms of $\varphi$, $\Lambda_{s,\chi}(\varphi)$ can be written as:
\begin{eqnarray}\label{eq: new def H inv linear functional}
   \Lambda_{s,\chi}(\varphi) &=& \int_{H\cap K} \langle \bigotimes_{i=1}^n \lambda_{s,i}, \varphi(w\mtrtwo{k_1}{}{}{k_2}) \chi_{1,s}^{-1}(\det k_1)\chi_{2,s}^{-1}(\det k_2) \rangle dk_1dk_2\nonumber\\
                    &=& \langle \bigotimes_{i=1}^n \lambda_{s,i}, \tilde{\varphi}(w) \rangle,
\end{eqnarray}
where $\tilde{\varphi}$ is obtained by averaging $\varphi$ against $\chi_{1,s}^{-1}(\det k_1)\chi_{2,s}^{-1}(\det k_2)$ over
the compact group $K\cap H$. In particular, if $\varphi$ satisfies the right $K\cap H$-equivariant property:
\begin{equation}\label{eq: right equivariant property}
    \varphi(g\mtrtwo{k_1}{}{}{k_2}) = \chi_0(\det k_1)\cdot(\chi_0\omega_0)(\det k_2)\cdot\varphi(g),
\end{equation}
where $\chi_0$ ($\omega_0$ resp.) is the restriction of the character $\chi$ ($\omega$ resp.) on $\{\pm 1\}$, then
\begin{equation}\label{eq: 020}
    \Lambda_{s,\chi}(\varphi) = \langle \bigotimes_{i=1}^n \lambda_{s,i}, \tilde{\varphi}(w) \rangle =\langle \bigotimes_{i=1}^n \lambda_{s,i}, \varphi(w) \rangle
\end{equation}
The following Proposition gives the desired property of $\Lambda_{s,\chi}$.
\begin{prop}\label{Cor: analytic property of Lambda}
    For every $\varphi\in V_\pi$, $\Lambda_{s,\chi}(\varphi)$ defined by \eqref{eq: new def H inv linear functional} has a meromorphic continuation in $s$ to the whole complex plane. It is a holomorphic multiple of $L(s,\pi\otimes\chi)$ and defines an element in the space
    \begin{equation*}
              \Hom_H(\pi, \chi_{1,s}(\det)\otimes\chi_{2,s}(\det))
    \end{equation*}
    which is the same as $\Hom_{H}(\pi, \abs{\det}^{-s+\frac{1}{2}}\chi^{-1}(\det)\otimes\abs{\det}^{s-\frac{1}{2}}(\chi\omega)(\det))$,
    whenever $s$ is not a pole of the $L$-function.
\end{prop}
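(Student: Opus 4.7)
The plan is to deduce the three claims in Proposition~\ref{Cor: analytic property of Lambda}---meromorphic continuation in $s$, holomorphicity after dividing by $L(s,\pi\otimes\chi)$, and $H$-equivariance of the resulting functional---from the construction of $\Lambda_{s,\chi}$ in \eqref{eq: new def H inv linear functional}, the $\GL_2(\R)$ statement recalled in Corollary~\ref{Cor: GL(2, R) Non-vanishing}, and the left $(B_1\times B_2)$-equivariance of $F_s$ already verified in \eqref{Eq: equivariant property of F}.

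First, for each $i=1,\dots,n$, Corollary~\ref{Cor: GL(2, R) Non-vanishing} gives that the normalized functional $\lambda_{s,i}^{\flat}:=L(s,\sigma_i\otimes\chi)^{-1}\lambda_{s,i}$ is a holomorphic family in $s$ of continuous functionals on $V_{\sigma_i}$, and that it takes the value $1$ on the explicit vector $v_{l_i}+\chi(-1)v_{-l_i}$. Taking the projective tensor product produces a continuous functional $\bigotimes_{i=1}^{n}\lambda_{s,i}^{\flat}$ on $\widehat{\bigotimes}_{i=1}^{n} V_{\sigma_i}$ that is jointly holomorphic in $s$. Since the local $L$-factor is multiplicative under normalized parabolic induction, $L(s,\pi\otimes\chi)=\prod_{i=1}^{n} L(s,\sigma_i\otimes\chi)$. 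Consequently, for every $\varphi\in V_\pi$ and every $g\in G$, the quantity
\[
    \frac{F_s(g;\varphi)}{L(s,\pi\otimes\chi)}=\Bigl\langle \bigotimes_{i=1}^{n}\lambda_{s,i}^{\flat},\,\varphi(wg)\Bigr\rangle
\]
is entire in $s$, smooth in $g$, and jointly continuous in $(s,g,\varphi)$ on compacta.

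Next, the integral \eqref{eq: new def H inv linear functional} defining $\Lambda_{s,\chi}(\varphi)$ is taken over the \emph{compact} group $K\cap H$, so the joint continuity and holomorphicity above pass through the integration by Fubini and Morera's theorem. This yields the meromorphic continuation of $\Lambda_{s,\chi}(\varphi)$ in $s$, together with the fact that $L(s,\pi\otimes\chi)^{-1}\Lambda_{s,\chi}(\varphi)$ is entire in $s$ and continuous in $\varphi$; this is the holomorphic-multiple assertion. For the $H$-equivariance, we combine \eqref{Eq: equivariant property of F} with the Iwasawa decomposition $H=(B_1\times B_2)(K\cap H)$ and the Haar-measure factorization $dh=\delta_{B_1}^{-1}(b_1)\delta_{B_2}^{-1}(b_2)\,db_1\,db_2\,dk$. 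Writing $k h=b(k)\cdot k'(k)$ for $k=\mtrtwo{k_1}{}{}{k_2}\in K\cap H$ and $h=(h_1,h_2)\in H$, a routine change of variables in $\Lambda_{s,\chi}(\pi(h)\varphi)$ absorbs the modular characters from \eqref{Eq: equivariant property of F} against the Jacobian, and what survives is exactly the scalar $\chi_{1,s}(\det h_1)\chi_{2,s}(\det h_2)$, establishing membership in $\Hom_H(\pi,\chi_{1,s}(\det)\otimes\chi_{2,s}(\det))$ at every $s$ of holomorphicity.

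The delicate point---and the main obstacle---is the first step: justifying that $\bigotimes_i \lambda_{s,i}^{\flat}$ defines a jointly continuous, $s$-holomorphic functional on the \emph{completed} projective tensor product $\widehat{\bigotimes}_i V_{\sigma_i}$ (and not merely on the algebraic tensor product). This requires uniform-in-$s$ continuity estimates for each $\lambda_{s,i}^{\flat}$ on compact subsets of the $s$-plane that avoid the poles of $L(s,\sigma_i\otimes\chi)$, combined with the universal property of the projective tensor product of Fr\'{e}chet spaces. Once this is in hand, the passage to $\Lambda_{s,\chi}$ and the equivariance argument are standard manipulations.
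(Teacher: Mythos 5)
Your proposal is correct and follows the same route as the paper, which simply records the proposition as a direct consequence of Corollary \ref{Cor: GL(2, R) Non-vanishing}: the tensor functional $\bigotimes_i\lambda_{s,i}$ inherits meromorphy and the holomorphic-multiple property factorwise, integration over the compact group $K\cap H$ preserves these, and the $H$-equivariance was already set up via \eqref{Eq: equivariant property of F}. The only difference is that you spell out the details (including the projective-tensor-product continuity point) that the paper leaves implicit.
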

\begin{proof}
   This is a direct consequence of Corollary \ref{Cor: GL(2, R) Non-vanishing}.
\end{proof}
We close this Section by the following remark:
\begin{rk}
   One should be able to prove that $\Lambda_{s,\chi}$ is a non-zero linear functional by imitating the proof of Theorem \ref{thm: parabolic induction preserves shalika model}. Since later, we will prove a sharper result that $\Lambda_{s,\chi}$ does not vanish on the minimal $K$-type $\tau$ of $\pi$, we will not discuss the non-vanishing property here.
\end{rk}


\section{Cohomological Vectors in the Induced Representation}\label{Section: Cohomological vector realization}


The goal of this section is to explicitly construct a cohomological vector of $\pi$ with the desired non-vanishing property for Theorem \ref{thm-main}. We retain all notation in the Introduction. Since $\pi$ (given in \eqref{Eq: pi parabolic induction parameter}) is realized as a parabolically induced representation, every function $f\in V_\pi$ is determined by its value on the maximal compact subgroup $K$. As \eqref{eq: 020} suggests, we only care about the value of cohomological function at the Weyl element $w\in K$ as defined in \eqref{eq: def of w}. Thus, it is convenient to work with the compact induction model of $\pi$. Let us start with some reductions and outline our strategy of the construction of the function in the minimal $K$-type.


\subsection{Some reductions}\label{subsection: reduction to polynomial repn}


Let $\pi$ be the parabolically induced representation in \eqref{Eq: pi parabolic induction parameter}, where $P$ is the standard parabolic subgroup of $G$ corresponding to the partition $[2^n]$ with Levi decomposition $P=MU$. Then $M\cap K$ is just a product of $n$ copies of $\RO_2(\R)$.
We write a general element in $(M\cap K)^0$ as
    \begin{equation}\label{Eq: parametrize M intersect K 1}
       k(\theta_1,\theta_2,\cdots,\theta_n) = \text{diag}(\mtrtwo{\cos{\theta_1}}{\sin{\theta_1}}{-\sin{\theta_1}}{\cos{\theta_1}},\cdots,\mtrtwo{\cos{\theta_n}}{\sin{\theta_n}}{-\sin{\theta_n}}{\cos{\theta_n}}).
    \end{equation}
    For the component group of $M\cap K$, we write
    \begin{equation}\label{Eq: parametrize M intersect K 2}
       c(\epsilon_1,\cdots,\epsilon_n) = \text{diag}(\mtrtwo{-1}{0}{0}{1}^{\frac{\epsilon_1-1}{2}},\cdots,\mtrtwo{-1}{0}{0}{1}^{\frac{\epsilon_n-1}{2}}),
    \end{equation}
    where all $\epsilon_i\in \{\pm 1\}$. For simplicity, for any representation $\rho_0$ of $\SO_2(\R)$, we write $\rho_0(\theta)$ for $\rho_0(\mtrtwo{\cos{\theta}}{\sin{\theta}}{-\sin{\theta}}{\cos{\theta}})$.
We denote by $\chi_l$, for each integer $l$, the character of $\SO_{2}(\R)$
    that sends $\mtrtwo{\cos\theta}{\sin\theta}{-\sin\theta}{\cos\theta}$ to $e^{i\cdot l\theta}$.

    Set $\sigma_j := D_{l_j}\abs{\quad}^\frac{m}{2}$, and $\lambda_j :=\lambda_{s,\chi,\sigma_j}$ to be the continuous linear functional on $\sigma_j$ defined
by the local Hecke integral \eqref{eq: local int GL(2,R)}. Let $\tau_j$ be the minimal $K$-type of $\sigma_j$. As described in Subsection \ref{Subsection:The GL(2,R) Case}, $\tau_j$ is a two dimensional space, in which there exists a basis $\{v_j,v_{-j}\}$ with property that
    \begin{enumerate}\label{property of minimal K type basis}
      \item $v_{-j} = \sigma_j(\mtrtwo{-1}{}{}{1})v_j = \tau_j(\mtrtwo{-1}{}{}{1})v_j$;
      \item $\sigma_j(\theta)v_j = \tau_j(\theta)v_j = e^{i\cdot(l_j+1)\theta}v_j$ and $\sigma_j(\theta)v_{-j} = \tau_j(\theta)v_{-j} = e^{-i\cdot(l_j+1)\theta}v_{-j}$;
      \item $\langle \lambda_j, v_j+\chi(-1)v_{-j} \rangle = L(s, \sigma_j\otimes\chi)$.
    \end{enumerate}
Denote by $\tau_j^{\epsilon,\chi}$ the one dimensional subspace of $\tau_j$ spanned by the vector $v_j+\chi(-1)v_{-j}$. By \cite[Proposition 8.1]{V2}, the minimal $K$-type $\tau$ of $\pi$ is also the minimal $K$-type in the induced representation
\begin{equation}\label{eq: pi 0}
    \pi_0 := \text{Ind}_{M\cap K}^{K} \tau_1\otimes\cdots\otimes\tau_n.
\end{equation}
 Every function $f\in V_{\pi_0}$ is a smooth function in $C^\infty(K,\tau_1\otimes\cdots\otimes\tau_n)$ satisfying the equivariant property:
    \begin{equation}\label{eq: equivariant property on functions on matrix}
         f(\diag(k_1,k_2,\cdots,k_n)k) = (\otimes_{j=1}^n\tau_j(k_j)) f(k).
    \end{equation}
    It may not be convenient to work with vector-valued functions. By using the basis of $\tau_1\otimes\tau_2\otimes\cdots\otimes\tau_n$, we may obtain
    scalar valued functions as follows: For every $f\in C^\infty(K,\tau_1\otimes\cdots\otimes\tau_n)$, we can write
    \begin{equation}\label{Eq: basis expansion of phi}
       f(k) = \sum f_{\eta_1,\eta_2,\cdots,\eta_n}(k) v_{\eta_1}\otimes v_{\eta_2}\otimes\cdots\otimes v_{\eta_n},
    \end{equation}
    where the summation is taken over all possible choices $\eta_j\in \{\pm j\}$ and $f_{\eta_1,\eta_2,\cdots,\eta_n}(k)\in C^\infty(K)$.
    \begin{lemma}\label{Lemma: compact induction lemma}
       A smooth function $f\in C^\infty(K,\tau_1\otimes\cdots\otimes\tau_n)$ with basis expansion \eqref{Eq: basis expansion of phi} satisfies the equivariant property \eqref{eq: equivariant property on functions on matrix}  if and only if both
       \begin{equation}\label{Eq: left eigen}
           f_{\eta_1,\eta_2,\cdots,\eta_n}(k(\theta_1,\theta_2,\cdots,\theta_n)k) = (\prod_{j=1}^{n} e^{i\cdot(l_j+1)\cdot \sgn(\eta_j)\theta_j}) f_{\eta_1,\eta_2,\cdots,\eta_n}(k),
       \end{equation}
       and
       \begin{equation}\label{Eq: component inv}
           f_{\eta_1,\eta_2,\cdots,\eta_n}(c(\epsilon_1,\epsilon_2,\cdots,\epsilon_n)k) = f_{\epsilon_1\eta_1,\epsilon_2\eta_2,\cdots,\epsilon_n\eta_n}(k)
       \end{equation}
       hold for all possible choices of $\epsilon_i\in\{\pm 1\}$ and $\eta_j\in \{\pm j\}$. Consequently, the map $f\mapsto f_{1,2,\cdots,n}$ defines a $K$-module isomorphism between $\pi_0$ and $\Ind_{(M\cap K)^0}^K\chi_{l_1+1}\otimes\chi_{l_2+1}\otimes\cdots\otimes\chi_{l_n+1}.$
    \end{lemma}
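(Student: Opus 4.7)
The plan is to exploit the factorization $M \cap K \cong \RO_2(\R)^n$, which splits as a semidirect product of its identity component $(M\cap K)^0 \cong \SO_2(\R)^n$ with the component group generated by the $c(\epsilon_1,\ldots,\epsilon_n)$. Accordingly, testing the $M\cap K$-equivariance of $f$ on these two kinds of elements separately will produce exactly the two conditions \eqref{Eq: left eigen} and \eqref{Eq: component inv}, and conversely the two conditions together will imply the full $M\cap K$-equivariance, because every element of $M\cap K$ factors uniquely as $k(\theta_1,\ldots,\theta_n)\,c(\epsilon_1,\ldots,\epsilon_n)$.

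For the first direction, I would substitute $k_j = \mtrtwo{\cos\theta_j}{\sin\theta_j}{-\sin\theta_j}{\cos\theta_j}$ into \eqref{eq: equivariant property on functions on matrix} and expand using the basis expansion \eqref{Eq: basis expansion of phi}. By property (2) of the basis $\{v_j,v_{-j}\}$ listed on the previous page,
\[
\tau_j(\theta_j)v_{\eta_j}=e^{i(l_j+1)\sgn(\eta_j)\theta_j}\,v_{\eta_j},
\]
so $\bigotimes_j \tau_j(\theta_j)$ is diagonal in the tensor basis and equating coefficients gives \eqref{Eq: left eigen}. Similarly, substituting $k_j=\mtrtwo{-1}{}{}{1}^{(\epsilon_j-1)/2}$ and using property (1), namely $\tau_j(\mtrtwo{-1}{}{}{1})v_{\eta_j}=v_{-\eta_j}$, yields $\bigotimes_j\tau_j(\mtrtwo{-1}{}{}{1}^{(\epsilon_j-1)/2})\bigl(v_{\eta_1}\otimes\cdots\otimes v_{\eta_n}\bigr)=v_{\epsilon_1\eta_1}\otimes\cdots\otimes v_{\epsilon_n\eta_n}$, and re-indexing the sum gives \eqref{Eq: component inv}. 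The converse direction is just this computation read backwards together with the semidirect product decomposition of $M\cap K$.

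For the last claim, observe that the map $f\mapsto f_{1,2,\ldots,n}$ is obviously $K$-equivariant under right translation, and by \eqref{Eq: left eigen} specialized to $\eta_j=j$ the image lies in $\Ind_{(M\cap K)^0}^K \chi_{l_1+1}\otimes\cdots\otimes\chi_{l_n+1}$. To invert it, note that \eqref{Eq: component inv} allows us to recover every component of $f$ from the single component $f_{1,2,\ldots,n}$: for $\eta_j\in\{\pm j\}$, setting $\epsilon_j=\sgn(\eta_j)$ yields
\[
f_{\eta_1,\ldots,\eta_n}(k)=f_{1,2,\ldots,n}\bigl(c(\epsilon_1,\ldots,\epsilon_n)k\bigr).
\]
Conversely, given any $\tilde f \in \Ind_{(M\cap K)^0}^K \chi_{l_1+1}\otimes\cdots\otimes\chi_{l_n+1}$, I would define the other components by this formula and then verify directly that the assembled vector-valued function satisfies \eqref{Eq: left eigen} and \eqref{Eq: component inv}, hence by the first part defines an element of $V_{\pi_0}$. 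This gives the two-sided inverse and establishes the $K$-module isomorphism.

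There is really no hard step here; the whole argument is bookkeeping with the basis. The only point requiring mild care is the sign/indexing convention when verifying that the reconstruction from $f_{1,2,\ldots,n}$ is consistent—in particular, that applying condition \eqref{Eq: component inv} twice gives an identity (which it does since $c(\epsilon_1,\ldots,\epsilon_n)$ has order at most $2$ componentwise) and that the induced right translation action on $f_{1,2,\ldots,n}$ agrees on both sides of the claimed isomorphism.
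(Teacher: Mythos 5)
Your proposal is correct and follows essentially the same route as the paper's proof: test the equivariance separately on $k(\theta_1,\ldots,\theta_n)$ and $c(\epsilon_1,\ldots,\epsilon_n)$, compare coefficients in the tensor basis, and use the semidirect-product structure of $\RO_2(\R)^n$ for the converse. The only difference is that you spell out the inverse map $\tilde f\mapsto f$ for the final isomorphism claim, which the paper leaves implicit (and which is indeed the reconstruction rule later used in Corollary \ref{cor: construction of cohomological vector}).
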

    \begin{proof}
       Let us assume that $f$ satisfies the equivariant property \eqref{eq: equivariant property on functions on matrix}. Then
       $f(k(\theta_1,\theta_2,\cdots,\theta_n)k)$ is equal to
       \begin{equation*}
          \begin{aligned}
          &(\otimes_{j=1}^{n}\tau_j(\theta_j)) \sum_{\text{all } \eta_j\in\{\pm j\}} f_{\eta_1,\eta_2,\cdots,\eta_n}(k) v_{\eta_1}\otimes v_{\eta_2}\otimes\cdots\otimes v_{\eta_n}\\
          = &\sum_{\text{all }\eta_j\in\{\pm j\}} f_{\eta_1,\eta_2,\cdots,\eta_n}(k)\cdot (\prod_{j=1}^{n} e^{i\cdot(l_j+1)\cdot \text{sgn}(\eta_j)\theta_j})\cdot v_{\eta_1}\otimes v_{\eta_2}\otimes\cdots\otimes v_{\eta_n}.\\
          \end{aligned}
       \end{equation*}
       By comparing the coefficients of each basis vector $v_{\eta_1}\otimes v_{\eta_2}\otimes\cdots\otimes v_{\eta_n}$, we get \eqref{Eq: left eigen}.
The equivariant property of $f$ with respect to $c(\epsilon_1,\cdots,\epsilon_n)$ can be obtained by the following calculation:
       \begin{equation*}
           \begin{aligned}
          &f(c(\epsilon_1,\epsilon_2,\cdots,\epsilon_n)k)\\
          =& (\otimes_{j=1}^{n}\tau_j(\mtrtwo{-1}{0}{0}{1}^{\frac{\epsilon_j-1}{2}})) \sum_{\text{all } \eta_j\in\{\pm j\}} f_{\eta_1,\eta_2,\cdots,\eta_n}(k) v_{\eta_1}\otimes v_{\eta_2}\otimes\cdots\otimes v_{\eta_n}\\
          =&\sum_{\text{all } \eta_j\in\{\pm j\}} f_{\eta_1,\eta_2,\cdots,\eta_n}(k) v_{\epsilon_1\eta_1}\otimes v_{\epsilon_2\eta_2}\otimes\cdots\otimes v_{\epsilon_n\eta_n}\\
          =&\sum_{\text{all } \eta_j\in\{\pm j\}} f_{\epsilon_1\eta_1,\epsilon_2\eta_2,\cdots,\epsilon_n\eta_n}(k) v_{\eta_1}\otimes v_{\eta_2}\otimes\cdots\otimes v_{\eta_n}.
          \end{aligned}
       \end{equation*}
       By comparing the coefficients for each basis vector, we get \eqref{Eq: component inv}. On the other hand, since $\RO_2(\R)$ is a semidirect product of $\SO_2(\R)$ and $\mathbb{Z}/2\mathbb{Z}$, once a function $f(k)$ satisfies \eqref{Eq: left eigen} and \eqref{Eq: component inv},
       one must have that $f(k)$ satisfies the desired equivariant property \eqref{eq: equivariant property on functions on matrix}. This completes our proof.
    \end{proof}

    Under the $K$-isomorphism in Lemma \ref{Lemma: compact induction lemma}, once we construct a complex-valued function $f_{1,2,\cdots,n}$ in the minimal $K$-type $\tau_0$ of  induced space
    \begin{equation}\label{eq: induced space complex value} \pi_{\textrm{sc}} := \Ind_{(M\cap K)^0}^K\chi_{l_1+1}\otimes\chi_{l_2+1}\otimes\cdots\otimes\chi_{l_n+1}\sbst C^\infty(K),\end{equation}
    we can use the equivariant properties \eqref{Eq: left eigen}, \eqref{Eq: component inv} and the basis expansion \eqref{Eq: basis expansion of phi} to recover a vector-valued function $f$ in the minimal $K$-type $\tau$ of $\pi_0$.

   \subsection{On certain minimal $K$-type functions}\label{subsection: Construction of the Scalar-valued Function in the Minimal K-type}
   As explained in the previous subsection, we only need to construct a scalar-valued function in the minimal $K$-type of the induced representation $\pi_{\textrm{sc}}$ in \eqref{eq: induced space complex value}. It turns out that our construction fits in a more general framework that may possibly be useful in other cases. Thus we would like to discuss this general framework in this subsection and provide a detailed formula of the desired cohomological test vector in $\pi_0$ (see \eqref{eq: pi 0}) in the next subsection.

   In this Subsection, we set $K'$ to be either a compact special unitary group $\SU_m$ or a compact unitary group $\RU_m$, a compact special orthogonal group $\SO_{m}(\BR)$ or a compact orthogonal group $\RO_m(\BR)$,
or a compact symplectic group $\Sp(2m)$, for any integer $m\geq 1$.
Or even more generally we may take $K'$ to be a finite product of those compact Lie groups.
We fix once and for all a maximal torus $T'$ of $(K')^0$,
and obtain a positive root system $\Phi^+((\fk')^\BC, (\ft')^\BC)$. It is well-known that all irreducible representations of $(K')^0$ can be parameterized by the highest weights, by the standard highest weight theory of compact groups \cite[Theorem 5.110]{Kn1}. Such a highest weight can be identified with a certain number of integers in the decreasing order.
   \begin{rk}
       To be precise, the highest weights which we used in this paper are analytically integral.
   \end{rk}
   When passing from $(K')^0$ to $K'$, we need to clarify our parametrization of irreducible representations of each disconnected factor of $K'$.
It is enough to clarify the 'highest weight' of a irreducible representation of $\RO_m(\BR)$ $(m=1,2,\cdots)$, as they are the only disconnected simple compact Lie groups among the list of compact groups
we considered in the previous paragraph.

   For odd orthogonal groups $\RO_{2l+1}(\BR)$ ($l=0,1,2 \cdots$), as
   $$ \RO_{2l+1}(\BR) \simeq \SO_{2l+1}(\BR)\times (\BZ/2\BZ),$$
   any irreducible $\RO_{2l+1}(\BR)$-module $\sigma$ is parameterized by $\mu[\xi]$, where
   $\mu = (\mu_1\geq \mu_2 \geq\cdots\geq \mu_l)\in\BZ^l$
   is the highest weight of $\sigma_0$ when restricted on $\SO_{2l+1}(\BR)$; and $\xi = \id$ or $\sgn$ is a quadratic character of the component group $\RO_{2l+1}(\BR)/\SO_{2l+1}(\BR)$. We call $\nu = \mu[\id]$ or $\mu[\sgn]$ to be the highest weight of $\sigma$.
   \begin{rk}
      In particular, when $l=0$, we just say that $[\id]$ or $[\sgn]$ are highest weights of the one-dimensional representation of the group $\RO_1(\BR) = \BZ/2\BZ.$
   \end{rk}

   For even orthogonal groups $\RO_{2l}(\BR)$ ($l=1,2 \cdots$), the restriction of an irreducible $\RO_{2l}(\BR)$-module $\sigma$ to $\SO_{2l}(\BR)$ is either irreducible, or reducible with two irreducible summands. If $\sigma_0:= \sigma\lvert_{\SO_{2l}(\BR)}$ is irreducible, then the highest weight of $\sigma_0$ has the form
   $$ \mu = (\mu_1 \geq \mu_2 \geq \cdots \geq \mu_{l-1} \geq 0)\in\BZ^l.$$
   In this case, there are exactly two non-equivalent irreducible $\RO_{2l}(\BR)$-modules whose restriction to $\SO_{2l}(\BR)$ has the above highest weight.
To distinguish them, we call $\nu:= \mu[\eps]$ with $\eps = 1$ or $-1$ to be the highest weight $\sigma$. If $\sigma_0$ is reducible, then
   it decomposes into two irreducible $\SO_{2l}(\BR)$-modules with highest weights
   $$ (\mu_1, \mu_2, \cdots, \mu_{l-1}, \mu_l), \quad (\mu_1, \mu_2, \cdots, \mu_{l-1}, -\mu_l)\in \BZ^l,$$
   where
   $$ \mu_1 \geq \mu_2 \geq \cdots \geq \mu_{l-1} \geq \mu_l >0. $$
   In this case, we call
   $$\nu := (\mu_1 \geq \mu_2 \geq \cdots \geq \mu_{l-1} \geq \mu_l)\in\BZ^l$$
   to be the highest weight of $\sigma$.
   \begin{rk}
      We would like to point out that the above summary for orthogonal groups is well-known, for example, see \cite[Section 5.5.5]{GW09}.
   \end{rk}
   Accordingly, we have the notion of highest weight vector, and similarly the notion of lowest weight vector of an irreducible representation of orthogonal groups. Thus, the notion of highest and lowest weight vectors of $K'$ are now clear. When $\beta$ is an irreducible $K'$-module with highest weight $\nu$, we define $\chi_\nu$ to be the character of $T'$ on the one-dimensional space generated by any nonzero highest weight vector of $\beta$.

   Next, we will clarify the notion of Cartan component of the tensor product of two irreducible $K'$-modules. It suffices to clarify that for the orthogonal groups $\RO_m(\BR)$, as the Cartan component of the tensor product of two irreducible representations of connected compact groups is well-known. Suppose that $(\alpha, W_\alpha)$ and $(\beta, W_\beta)$ are two irreducible $\RO_{m}(\BR)$-modules with two highest weight vectors $v_\alpha\in W_\alpha, v_\beta\in W_\beta$. Then there exists a unique irreducible $\RO_m(\BR)$-submodule of the tensor product $W_\alpha\otimes W_\beta$, called the Cartan component of $W_\alpha\otimes W_\beta$, generated by $v_\alpha\otimes v_\beta$. It is clear now that one can extend the notion of Cartan component of the tensor product of two irreducible $K'$-modules in a natural way.

   For any irreducible $K'$-module $(\beta, W)$ with highest weight $\nu$, we write $(\beta^*, W^*) $ for its dual representation and  $\nu^*$ for the highest weight of $\beta^*$. It is well-known that as a $K'$-module on the left, the space of $K'$-finite vectors $C^\infty(K')_{\textrm{fin}}$ is completely reducible, and
       \begin{equation}
            C^\infty(K')_{\textrm{fin}} = \bigoplus_{\alpha\in\hat{K'}} \alpha\otimes \Hom_{K'}(\alpha, C^\infty(K')_{\textrm{fin}}),
       \end{equation}
       where $\hat{K'}$ stands for the unitary dual of $K'$ as usual. Under the $K'$-action of $C^\infty(K')_{\fin}$ on the right, the space
       $\Hom_{K'}(\alpha, C^\infty(K')_{\textrm{fin}})$ carries a natural $K'$-action such that
       $$ \Hom_{K'}(\alpha, C^\infty(K')_{\textrm{fin}}) \simeq \alpha^*$$
       as $K'$-modules.
   We summarize the above well-known result of representation of compact groups as the following Lemma, which will be used repeatedly in this Section.
   \begin{lemma}\label{lemma: useful dual lemma}
       Suppose that under the left action of $K'$, a function $f(x)\in C^\infty(K')$ generates an irreducible $K'$-submodule $\alpha$ of $C^\infty(K')$, then under the right $K'$-action, it generates an irreducible $K'$-submodule of $C^\infty(K')$ which is isomorphic to the dual representation $\alpha^*$.
   \end{lemma}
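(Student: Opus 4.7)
\begin{proof-idea}
The lemma is a direct structural consequence of the Peter--Weyl theorem, so the plan is simply to apply that decomposition with careful bookkeeping of duals.

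First, I would invoke Peter--Weyl to identify
\[
C^\infty(K')_{\fin} \;\simeq\; \bigoplus_{\beta \in \widehat{K'}} V_\beta \otimes V_\beta^*
\]
as a $K'\times K'$-bimodule, where $v\otimes v^*$ corresponds to the matrix coefficient $m_{v,v^*}(x):=\langle v^*,\beta(x)v\rangle$. A direct computation with the formulas $(L(g)\varphi)(x)=\varphi(g^{-1}x)$ and $(R(g)\varphi)(x)=\varphi(xg)$ yields
\[
L(g)\,m_{v,v^*} = m_{v,\,\beta^*(g)v^*}, \qquad R(g)\,m_{v,v^*} = m_{\beta(g)v,\,v^*},
\]
so on the $\beta$-summand the left action is $\beta^*$ on the second slot while the right action is $\beta$ on the first slot.

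Next, by hypothesis the left $K'$-orbit of $f$ is irreducible of type $\alpha$, so $f$ must lie in the summand with $\beta^*\simeq\alpha$, namely $V_{\alpha^*}\otimes V_\alpha$. Identifying this summand with $\mathrm{End}(V_\alpha)$ and invoking Burnside's theorem (equivalently, the Jacobson density theorem applied to the irreducible representation $\alpha$), one sees that the left orbit of an element $T\in\mathrm{End}(V_\alpha)$ spans a subspace of dimension $\dim\alpha\cdot\mathrm{rank}(T)$; forcing this to equal $\dim\alpha$ forces $T$ to be of rank one. Hence $f=v_0\otimes u_0$ is a simple tensor, with $v_0\in V_{\alpha^*}\setminus\{0\}$ and $u_0\in V_\alpha\setminus\{0\}$.

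Finally, the right-translation formula gives $R(g)f=\alpha^*(g)v_0\otimes u_0$, which, as $g$ ranges over $K'$ and since $\alpha^*$ is irreducible, sweeps out the full subspace $V_{\alpha^*}\otimes u_0 \simeq \alpha^*$, yielding the conclusion. The only real subtlety throughout is keeping the left/right and $\beta/\beta^*$ identifications straight so that the output is $\alpha^*$ rather than $\alpha$; once the matrix-coefficient formulas above are written out and the rank-one reduction is noted, there is no further analytic or algebraic obstacle.
\end{proof-idea}
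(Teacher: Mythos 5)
Your proof is correct and follows the same route the paper takes: the paper states Lemma \ref{lemma: useful dual lemma} merely as a summary of the Peter--Weyl bimodule decomposition of $C^\infty(K')_{\mathrm{fin}}$ displayed just above it, and gives no further argument. Your write-up supplies the one detail the paper leaves implicit --- the Burnside/rank-one reduction showing that irreducibility of the \emph{left} cyclic module forces $f$ to be a simple tensor in $V_{\alpha^*}\otimes V_\alpha$, without which the \emph{right} cyclic module need not be irreducible --- and both that step and the bookkeeping of which slot carries $\beta$ versus $\beta^*$ are carried out correctly.
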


   Suppose further that we have a closed subgroup $C'$ of $K'$.
   \begin{prop}\label{prop: construction of fundamental weight building function}
   Let $\lambda_{W, \textrm{endo}}$ be a $C'$-invariant linear functional on $W^*\otimes W$, i.e.
   $$ \langle\lambda_{W,\, \textrm{endo}}, (\beta^*\otimes\beta)(c') v \rangle = \langle\lambda_{W,\, \textrm{endo}}, v \rangle$$
   for any $c'\in C'$, $v\in W^*\otimes W.$ Then for any highest weight vectors $w\in W$, $w^*\in W^*$, the following function
   \begin{equation}\label{eq: fw}
       f_{W}(x) := \langle \lambda_{W, \textrm{endo}}, \beta^*(x^{-1})w^*\otimes\beta(x^{-1})w\rangle, \quad x\in K',
   \end{equation}
   is right $C'$-invariant and lives in the induced representation $\Ind_{T'}^{K'} \chi_{\nu}^{-1}\chi_{\nu^*}^{-1}$. Moreover, if $f_W$ is nonzero, it generates a minimal $K'$-type $\tau'$ of
the induced representation $\Ind_{T'}^{K'} \chi_{\nu}^{-1}\chi_{\nu^*}^{-1}$, in which $f_W$ is a highest weight vector. In this case, the minimal $K'$-type $\tau'$ is isomorphic to the Cartan component of $W^*\otimes W$.
   \end{prop}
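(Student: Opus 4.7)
The plan is to prove the three assertions of the proposition in turn: the equivariance properties, the realization of $f_W$ as a matrix coefficient of $\beta^*\otimes\beta$, and the identification of the right $K'$-submodule generated by $f_W$ with the Cartan component.

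For the equivariance statements I would compute directly. For right $C'$-invariance,
$$f_W(xc') = \langle\lambda_{W,\textrm{endo}},\, \beta^*((c')^{-1})\beta^*(x^{-1})w^*\otimes \beta((c')^{-1})\beta(x^{-1})w\rangle,$$
and applying the $C'$-invariance of $\lambda_{W,\textrm{endo}}$ at the element $(c')^{-1}$ gives $f_W(xc')=f_W(x)$. For the left $T'$-equivariance, since $w$ and $w^*$ are highest weight vectors we have $\beta(t^{-1})w=\chi_\nu(t)^{-1}w$ and $\beta^*(t^{-1})w^*=\chi_{\nu^*}(t)^{-1}w^*$, and pulling the scalars out of the pairing yields $f_W(tx)=\chi_\nu(t)^{-1}\chi_{\nu^*}(t)^{-1}f_W(x)$, so that $f_W\in \Ind_{T'}^{K'}\chi_\nu^{-1}\chi_{\nu^*}^{-1}$.

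Next I would realize $f_W$ as a matrix coefficient of $\rho:=\beta^*\otimes\beta$ on $V:=W^*\otimes W$ by introducing the linear map
$$\Phi: V\longrightarrow C^\infty(K'),\qquad \Phi(u)(x):=\langle\lambda_{W,\textrm{endo}},\,\rho(x^{-1})u\rangle.$$
A one-line change of variable shows $\Phi\circ\rho(y)=L(y)\circ\Phi$, i.e.\ $\Phi$ intertwines $\rho$ with the left-regular representation $L$ on $C^\infty(K')$. The vector $v_0:=w^*\otimes w$ is a highest weight vector of the Cartan component $U\subseteq V$, with weight $\nu+\nu^*$, and $\Phi(v_0)=f_W$. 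If $f_W\neq 0$, irreducibility of $U$ forces $\Phi\lvert_U$ to be injective, so the left-$K'$-submodule of $C^\infty(K')$ generated by $f_W$ is exactly $\Phi(U)\simeq U$, and $f_W$ is a highest weight vector of it.

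To transfer this to the right $K'$-action, which is the action appearing in the compact induction model, I would invoke Lemma \ref{lemma: useful dual lemma}: the right-$K'$-submodule of $C^\infty(K')$ generated by $f_W$ is isomorphic to $U^*$. The concluding observation is that the Cartan component $U$ is self-dual. Indeed, writing $\nu^*=-w_0\nu$ for the long Weyl element $w_0$ of $K'$, the highest weight of $U$ is $\nu+\nu^*=\nu-w_0\nu$, and a direct check gives $-w_0(\nu-w_0\nu)=\nu-w_0\nu$, so $U\simeq U^*$ as $K'$-modules. Thus the right-$K'$-module generated by $f_W$ is isomorphic to the Cartan component, as claimed.

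The main delicate point is pinning down that this $K'$-type is the \emph{minimal} one of $\Ind_{T'}^{K'}\chi_\nu^{-1}\chi_{\nu^*}^{-1}$, rather than just one of the constituents. I would argue this by a weight count together with Frobenius reciprocity: the $K'$-types appearing in the induction are exactly those admitting $-\nu-\nu^*$ as a $T'$-weight, and since $-\nu-\nu^*$ is the lowest (hence extremal) weight of $U$, any $K'$-type strictly smaller in the Vogan sense would be ruled out for failing to have this character as an extremal weight. Matching this with the known highest weight theory of the compact groups listed at the start of the subsection then identifies $\Phi(U)$ as the minimal $K'$-type of the induced representation.
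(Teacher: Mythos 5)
Your proposal is correct and follows essentially the same route as the paper: the same direct computations for right $C'$-invariance and left $T'$-equivariance, the same intertwining map $w_1^*\otimes w_1\mapsto f_W(\cdot\,;w_1,w_1^*)$ into the left regular representation identifying the left-generated submodule with the Cartan component, and the same appeal to Lemma \ref{lemma: useful dual lemma} plus self-duality to pass to the right action. The only cosmetic difference is that the paper deduces self-duality of the Cartan component from the self-duality of $W^*\otimes W$ as a whole rather than from your $-w_0$ computation on highest weights (the former being slightly cleaner for the disconnected groups $\RO_m(\BR)$ allowed in this subsection).
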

   \begin{proof}
       Right $C'$-invariance of $f_W$ follows directly from the $C'$-invariance of the linear functional $\lambda_{W,\,\textrm{endo}}$. For any $t\in T'$, we have
       \begin{equation}\label{eq: equiv fw}
           \begin{aligned}
                  f_{W}(tx) = &\langle \lambda_{W, \textrm{endo}}, \beta^*(x^{-1}t^{-1})w^*\otimes\beta(x^{-1}t^{-1})w\rangle\\
                            = & \chi_{\nu}^{-1}(t)\chi_{\nu^*}^{-1}(t)\cdot\langle \lambda_{W, \textrm{endo}}, \beta^*(x^{-1})w^*\otimes\beta(x^{-1})w\rangle\\
                            = & \chi_{\nu}^{-1}(t)\chi_{\nu^*}^{-1}(t) f_{W}(x).
           \end{aligned}
       \end{equation}
       Thus $f_{W}(x)\in \Ind_{T'}^{K'} \chi_{\nu}^{-1}\chi_{\nu^*}^{-1}.$ By the Frobenius reciprocity law, the Cartan component of $W^*\otimes W$ is isomorphic to a minimal $K$-type of $\Ind_{T'}^{K'} \chi_{\nu}^{-1}\chi_{\nu^*}^{-1}$. It remains to check the last claim in the Proposition.

       Let $w_1\in W$ and $w_1^*\in W^*$ be any two vectors. We define a smooth function on $K'$ by
       \begin{equation}\label{eq: fw temp}
       f_{W}(x; w_1, w_1^*) := \langle \lambda_{W,\, \textrm{endo}}, \beta^*(x^{-1})w_1^*\otimes\beta(x^{-1})w_1\rangle.
       \end{equation}
       It is straightforward to check that under the left $K'$-action of $C^\infty(K')$, the map
       \begin{equation*}
           \begin{aligned}
                W^*\otimes W &\rightarrow C^\infty(K')\\
                w_1^*\otimes w_1 &\mapsto f_{W}(x; w_1, w_1^*)
           \end{aligned}
       \end{equation*}
       is a $K'$-intertwining operator. Together with \eqref{eq: equiv fw}, this implies that if $f_{W}(x)$ defined in \eqref{eq: fw} is nonzero, then under the left action of $K'$,
it generates an irreducible $K'$-submodule of $C^\infty(K')$ isomorphic to the Cartan component of $W^*\otimes W$. Moreover, $f_{W}(x)$ is a highest weight vector in this irreducible $K'$-submodule.
By Lemma \ref{lemma: useful dual lemma}, under the right $K'$-action, $f_{W}(x)$ generates an irreducible $K'$-submodule of $C^\infty(K')$ isomorphic to dual of the Cartan component of $W^*\otimes W$.
The Proposition finally follows from the simple observation that $W^*\otimes W$ is a self-dual $K'$-module.
   \end{proof}
   Similarly, we have the following Corollary, whose proof is the same as that of Proposition \ref{prop: construction of fundamental weight building function}, and will be omitted here.
   \begin{cor}\label{cor: weight building function in general}
       Given $n$ irreducible $K'$-module $W_1, W_2,$ $\cdots W_n$, for each $j=1,2,\cdots,n$, let $\nu_j$ be the highest weight of $W_j$ and a $C'$-invariant function $f_{W_j}(x)$ be as in \eqref{eq: fw}. For $n$ non-negative integers $m_1, m_2,\cdots, m_n$, define a character of $T'$ via
       $$\chi_\nu:= \prod_{j=1}^n (\chi_{\nu_j}\chi_{\nu_j^*})^{m_j}.$$ Then the smooth function
       $$f(x; W_1, W_2, \cdots, W_n):= \prod_{j=1}^n f_{W_j}^{m_j}(x)$$
       is a $C'$-invariant function in the induced representation $\Ind_{T'}^{K'} \chi_{\nu}^{-1}$. Moreover, if $f(x; W_1, W_2, \cdots, W_n)$ is nonzero, it generates a minimal $K'$-type $\tau'$ of $\Ind_{T'}^{K'} \chi_{\nu}^{-1},$ in which $f(x; W_1, W_2, \cdots, W_n)$ is a highest weight vector. In this case, the minimal $K'$-type $\tau'$ is isomorphic to the Cartan component of
       $$\bigotimes_{j=1}^n (W_j^*\otimes W_j)^{\otimes m_j}.$$
   \end{cor}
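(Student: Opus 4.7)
The plan is to carry out the same strategy as in the proof of Proposition \ref{prop: construction of fundamental weight building function} but applied multiplicatively across the $n$ factors, exploiting the compatibility of pointwise multiplication on $C^\infty(K')$ with the left and right regular representations.

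First I would verify the two elementary transformation properties. Right $C'$-invariance of $f(x; W_1,\ldots,W_n) = \prod_j f_{W_j}^{m_j}$ is immediate, since each factor $f_{W_j}$ is right $C'$-invariant by Proposition \ref{prop: construction of fundamental weight building function}. For the left transformation under $T'$, applying \eqref{eq: equiv fw} to each factor and multiplying yields
\[
f(tx; W_1,\ldots,W_n) = \prod_j \bigl(\chi_{\nu_j}^{-1}\chi_{\nu_j^*}^{-1}\bigr)^{m_j}(t)\cdot f(x; W_1,\ldots,W_n) = \chi_\nu^{-1}(t)\, f(x; W_1,\ldots,W_n),
\]
which places $f(x; W_1,\ldots,W_n)$ in $\Ind_{T'}^{K'}\chi_\nu^{-1}$.

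Next I would identify the left $K'$-submodule generated by $f(x; W_1,\ldots,W_n)$ in $C^\infty(K')$. Pointwise multiplication of smooth functions intertwines the diagonal left action on the tensor product with the left action on $C^\infty(K')$, and the Leibniz rule for the induced Lie algebra action shows that if $X\in(\fk')^\BC$ lies in the positive nilpotent part and annihilates each $f_{W_j}$, then $X$ annihilates any product of their powers as well. Combined with the multiplicative behavior under $T'$ already established, this means that a product of left-highest-weight vectors is again a left-highest-weight vector with weight equal to the sum of the individual weights. By Proposition \ref{prop: construction of fundamental weight building function} each $f_{W_j}$ is a left-highest-weight vector of weight $\nu_j+\nu_j^*$ generating (a copy of) the Cartan component of $W_j^*\otimes W_j$. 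Therefore, assuming $f(x; W_1,\ldots,W_n)\ne 0$, it is a left-highest-weight vector of weight $\sum_j m_j(\nu_j+\nu_j^*)$, generating the irreducible $K'$-module with this highest weight, which is precisely the Cartan component of $\bigotimes_j (W_j^*\otimes W_j)^{\otimes m_j}$.

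Finally, Lemma \ref{lemma: useful dual lemma} identifies the right-cyclic submodule $\tau'$ generated by $f(x; W_1,\ldots,W_n)$ as the dual of this left-cyclic module. This Cartan component is however self-dual: for each $j$ we have $\nu_j^* = -w_0(\nu_j)$ for the longest Weyl element $w_0$, so each individual weight $\nu_j+\nu_j^*$ is fixed by the involution $-w_0$, and therefore so is the total highest weight $\sum_j m_j(\nu_j+\nu_j^*)$, yielding the self-duality. Consequently $\tau'$ is itself isomorphic to the Cartan component of $\bigotimes_j (W_j^*\otimes W_j)^{\otimes m_j}$, and by Frobenius reciprocity $\tau'$ is a minimal $K'$-type of $\Ind_{T'}^{K'}\chi_\nu^{-1}$ containing $f(x; W_1,\ldots,W_n)$ as a highest weight vector. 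The only nontrivial point in the argument is the verification that products of left-highest-weight functions behave as expected under the left $K'$-action—everything else is a direct multiplicative extension of Proposition \ref{prop: construction of fundamental weight building function}, supplemented by the self-duality observation above.
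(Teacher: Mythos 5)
Your proposal is correct and follows essentially the same route as the paper, which in fact omits the proof of this corollary on the grounds that it is identical to that of Proposition \ref{prop: construction of fundamental weight building function}: right $C'$-invariance and the $T'$-equivariance are multiplicative, the product of left-highest-weight vectors is again a left-highest-weight vector generating the Cartan component, Lemma \ref{lemma: useful dual lemma} transfers this to the right action, and self-duality (which you justify explicitly via $\nu_j^*=-w_0(\nu_j)$, elaborating on the paper's ``simple observation'') closes the argument. Your Leibniz-rule verification is a harmless variant of the paper's intertwining-operator argument and fills in exactly the details the paper leaves implicit.
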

   \subsection{On $K\cap H$-equivariant cohomological test vectors}\label{Subsection: construction of equivariant Cohomological vector}

   Let us retain the notation in Subsection \ref{subsection: reduction to polynomial repn}. Now we are fully ready to construct explicitly a cohomological test vector in the minimal $K$-type $\tau$ of $\pi$,
with desired properties.

Set $K' = K = \RO_{2n}(\BR)$ and $C' = H\cap K = \RO_{n}(\BR)\times \RO_n(\BR),$ where $n\geq 2$ is an integer. The maximal torus $T'$ in the previous subsection is chosen to be $(M\cap K)^0$, which is isomorphic to a product of $n$ copies of $\SO_2(\BR)$. We consider the standard representation of $K'$ on $V = \BC^{2n}$,  where every vector in $V$ is realized as a column vector and the group $K$ acts by multiplication on the left. Take a standard basis $e_1, e_2,\cdots, e_{2n}$ in $V$. Then the dual representation $V^*$ can be realized as the space of row vectors with $2n$ entries, where an element $x\in K$ acts via multiplication by $x^{-1}$ on the right. We take $e_1^t, e_2^t,\cdots e_{2n}^t$ to be the dual basis of $V^*$. For each $j = 1,2, \cdots, n$,
   \begin{equation}\label{def: uj}
           u_j := e_{2j-1}-i e_{2j},\qquad u_j^*:= e_{2j-1}^t-i e_{2j}^t
   \end{equation}
   are weight vectors of $V$ and $V^*$ respectively with weight
   $$ [0,0,\cdots,0, 1, 0,\cdots,0 ],$$
   where $1$ locates in the $j$-th position.

   For each $j=1,2,\cdots,n$, we consider the fundamental representation $\beta_j$ of $K$ on the space $W_j := \wedge^j V$. By \cite[Theorem 5.5.13]{GW09}, when $j\ne n$, the restriction of $W_j$ on $K^0$ is irreducible with highest weight
   $$[1,1,\cdots,1,0,\cdots,0],$$
   where all the $1$'s locate in the first $j$ positions; when $j = n$,  $W_n$ is only irreducible as a $K$-module (not a $K^0$-module) whose  highest weight is
   $$ [1,1, \cdots ,1].$$
   It is clear that the following linear functional on $V^{\ast}\otimes V$ is $H\cap K$-invariant:
   \begin{equation}
       \langle\lambda_{V,\textrm{endo}}, v^*\otimes v\rangle := v^*\mtrtwo{\RI_n}{}{}{0_n} v, \qquad v\in V, v^*\in V^*.
   \end{equation}
   Thus, the $H\cap K$-invariant linear functional $\lambda_{V,\textrm{endo}}$ induces an $H\cap K$-invariant linear functional $\lambda_{W_j,\textrm{endo}}$ on $W_j^*\otimes W_j$ defined as follows: for every $v_1,v_2,\cdots, v_j\in V$, $v_1^*, v_2^*, \cdots, v_j^*\in V^*$, we define
   \begin{equation}
       \begin{aligned}
       \langle\lambda_{W_j,\textrm{endo}}, (\wedge_{l=1}^j v_l)\bigotimes (\wedge_{l=1}^j v_l^*)\rangle
        := \sum_{s\in S_j} \sgn(s)\prod_{i=1}^j \langle \lambda_{V,\textrm{endo}}, v_i^*\otimes v_{s(i)}\rangle,
       \end{aligned}
   \end{equation}
   where $S_j$ is the symmetric group which permutes $j$ symbols, and $\sgn(s)$ is the sign of the permutation $s$, i.e. it is $1$ if $s$ is an even permutation, and it is $-1$ if $s$ is an odd permutation.

   We take a highest weight vector of $W_j$:
      $$\tilde{u_j}:= u_1\wedge u_2\wedge\cdots\wedge u_j$$
   and a highest weight vector of $W_j^*$:
      $$\tilde{u_j}^*:= u_1^*\wedge u_2^*\wedge\cdots\wedge u_j^*,$$
   where all $u_1, u_2,\cdots, u_j, u_1^*, u_2^*,\cdots, u_j^*$ are defined in \eqref{def: uj}.
   \begin{cor}\label{cor: fundamental even in minimal K type}
      For each $j=1,2,\cdots,n$, the following smooth function on $K$ defined by
   \begin{equation}\label{eq: even fundamental}
       f_{W_j}(x) := \langle \lambda_{W_j,\, \textrm{endo}}, \beta_j^*(x^{-1})\tilde{u_j}^*\otimes\beta_j(x^{-1})\tilde{u_j}\rangle
   \end{equation}
   is right $H\cap K$-invariant, and lives in the induced representation
   $$ W_{{\even},j} = \Ind_{(M\cap K)^0}^K \underbrace{\chi_{2}\otimes\chi_2\otimes\cdots\otimes \chi_{2}}_{j \text{ copies}}\otimes \id\otimes\cdots\otimes \id.$$ Moreover,
   $$f_{W_j}(w) = 1,$$
   where $w$ is the Weyl element defined in \eqref{eq: def of w}. Under the left $K$-action, $f_{W_j}(x)$ is a highest weight vector; and under the right $K$-action, it generates a minimal $K$-type of $W_{{\even},j}.$
   \end{cor}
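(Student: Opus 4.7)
The plan is to deduce all the structural claims directly from Proposition \ref{prop: construction of fundamental weight building function} applied with $K' = K$, $C' = H\cap K$, $T' = (M\cap K)^0$, $\beta = \beta_j$, $W = W_j$, and with the highest weight vectors $w = \tilde u_j$, $w^* = \tilde u_j^*$. The only step that must be done by hand is the normalization $f_{W_j}(w) = 1$; once this is in place the non-vanishing of $f_{W_j}$ is automatic and the minimal $K$-type clauses of the proposition apply verbatim.

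To make sure the proposition is applicable I would first verify that $\lambda_{W_j,\,\textrm{endo}}$ is $H\cap K$-invariant. Any $h = \mtrtwo{h_1}{}{}{h_2} \in H\cap K$ commutes with the projection $\mtrtwo{\RI_n}{}{}{0_n}$, and therefore
$$\lambda_{V,\,\textrm{endo}}(\beta^*(h)v^*\otimes \beta(h)v) \;=\; v^* h^{-1}\mtrtwo{\RI_n}{}{}{0_n} h v \;=\; v^*\mtrtwo{\RI_n}{}{}{0_n}v \;=\; \lambda_{V,\,\textrm{endo}}(v^*\otimes v).$$
This invariance propagates through the alternating/determinantal construction to $\lambda_{W_j,\,\textrm{endo}}$. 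The identification of the induced character produced by the proposition with $\chi_2^{\otimes j}\otimes\id^{\otimes(n-j)}$ is then a direct weight computation for the $(M\cap K)^0$-action on $\tilde u_j\otimes \tilde u_j^*$, using the self-duality $W_j \simeq W_j^*$ of $W_j$ as a $K$-module.

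For the normalization, I would use the explicit action $w^{-1}e_{2k-1} = e_k$ and $w^{-1}e_{2k} = e_{n+k}$ for $1\leq k\leq n$, which yields $w^{-1}u_k = e_k - ie_{n+k}$ and $w^{-1}u_k^* = e_k^t - ie_{n+k}^t$. Since $k,l \leq j \leq n$, the projection $\mtrtwo{\RI_n}{}{}{0_n}$ sends $w^{-1}u_l$ to $e_l$, and hence $\langle \lambda_{V,\,\textrm{endo}}, w^{-1}u_k^*\otimes w^{-1}u_l\rangle = \delta_{k,l}$. Plugging this into the determinantal formula for $\lambda_{W_j,\,\textrm{endo}}$ collapses the sum over $S_j$ to the identity permutation, giving $f_{W_j}(w) = 1$. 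The main delicate point is the sign bookkeeping involved in matching the character coming out of Proposition \ref{prop: construction of fundamental weight building function} to the character $\chi_2^{\otimes j}\otimes\id^{\otimes(n-j)}$ used in the statement of $W_{\textrm{even},j}$; once that is verified and the non-vanishing is secured by the evaluation at $w$, the highest weight property under the left action and the generation of the minimal $K$-type under the right action are automatic from the proposition.
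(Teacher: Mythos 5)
Your proposal is correct and follows essentially the same route as the paper: everything except the normalization is quoted from Proposition \ref{prop: construction of fundamental weight building function}, and $f_{W_j}(w)=1$ is obtained by the same direct computation showing $\langle\lambda_{V,\textrm{endo}},\beta^*(w^{-1})u_k^*\otimes\beta(w^{-1})u_l\rangle=\delta_{k,l}$ so that the sum over $S_j$ collapses to the identity permutation. Your extra verification of the $H\cap K$-invariance of $\lambda_{V,\textrm{endo}}$ matches the remark the paper makes just before the corollary, so there is no substantive difference.
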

   \begin{proof}
       Only the equation $f_{W_j}(w) = 1$ needs to be checked, the other statements follow directly from Proposition \ref{prop: construction of fundamental weight building function}. Let $w$ be the Weyl element in \eqref{eq: def of w}. Then
       \begin{equation}\label{eq: even fundamental}
           \begin{aligned}
             f_{W_j}(w) = &\langle \lambda_{W_j,\, \textrm{endo}}, \beta_j^*(w^{-1})\tilde{u_j}^*\otimes\beta_j(w^{-1})\tilde{u_j}\rangle\\
                        = &\sum_{s\in S_j} \sgn(s)\prod_{l=1}^j u_l^* \cdot w^{-1}\cdot\mtrtwo{\RI_n}{}{}{0_n}\cdot w\cdot  u_{s(l)}
           \end{aligned}
        \end{equation}
       By a direct matrix computation, it is easy to check that for any $1\leq l \leq j$,
       $$u_l^* \cdot w^{-1}\cdot\mtrtwo{\RI_n}{}{}{0_n}\cdot w\cdot  u_{s(l)} = \delta_{l}^{s(l)},$$
       where $\delta_{l}^k$ is the standard Kronecker delta symbol. Thus, $f_{W_j}(w)=1$.
   \end{proof}
   \begin{cor}\label{cor: weight building function minimal K type}
      Given a sequence of decreasing even integers
      \begin{equation}\label{eq: vector N}
         \vec{N'} := (N_1'\geq N_2'\geq \cdots \geq N_n' \geq 0),
      \end{equation}
      for each $j=1,2,\cdots,n$, let smooth functions $f_{W_j}(x)$ be constructed as in  \eqref{eq: even fundamental}. Then the function
      $$F_{wt, \vec{N'}}(x):= \left(\prod_{j=1}^{n-1} f_{W_j}(x)^{\frac{N_j'-N_{j+1}'}{2}}\right) \cdot f_{W_n}(x)^{\frac{N_n'}{2}}$$
      is right $K\cap H$-invariant, and lives in the induced representation $$ W_{{\even},\vec{N'}} = \Ind_{(M\cap K)^0}^K \chi_{N_1'}\otimes\chi_{N_2'}\otimes\cdots\otimes \chi_{N_n'}.$$ Moreover, we have
      $$F_{wt, \vec{N'}}(w) = 1$$
      where $w$ is the Weyl element defined in \eqref{eq: def of w}. Under the left $K$-action, $F_{wt, \vec{N'}}(x)$ is a highest weight vector; and under the right $K$-action, it generates a minimal $K$-type of $W_{{\even}, \vec{N'}}.$
   \end{cor}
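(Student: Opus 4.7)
The plan is to derive this corollary directly from Corollary \ref{cor: weight building function in general} applied to the fundamental representations $W_j = \wedge^j V$ built in Corollary \ref{cor: fundamental even in minimal K type}, with the exponents
\[
 m_j := \tfrac{N_j' - N_{j+1}'}{2}\ \ (1 \le j \le n-1), \qquad m_n := \tfrac{N_n'}{2}.
\]
Because $\vec{N'}$ is a decreasing sequence of \emph{even} integers with $N_n' \ge 0$, each $m_j$ is a well-defined non-negative integer, so $F_{wt, \vec{N'}} = \prod_{j=1}^n f_{W_j}^{m_j}$ is a genuine smooth function on $K$, and the corollary becomes a routine bookkeeping exercise.

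First I would record the two easy properties. Each $f_{W_j}$ is right $(K \cap H)$-invariant by Corollary \ref{cor: fundamental even in minimal K type}, so any product of them is too, and $F_{wt,\vec N'}$ inherits this invariance. Similarly $f_{W_j}(w) = 1$ for every $j$ by the same corollary, so $F_{wt, \vec{N'}}(w) = \prod_j 1^{m_j} = 1$; in particular $F_{wt, \vec{N'}}$ is not identically zero.

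Next I would verify the left equivariance. Corollary \ref{cor: fundamental even in minimal K type} tells us that $f_{W_j}$ transforms under the torus $(M\cap K)^0$ by the character $\chi_2^{\otimes j}\otimes \id^{\otimes(n-j)}$; raising to the $m_j$-th power and multiplying over $j$, the exponent appearing on the $l$-th $\SO_2(\R)$-factor is the telescoping sum
\[
 \sum_{j=l}^{n-1} 2m_j \;+\; 2m_n \;=\; \sum_{j=l}^{n-1}(N_j' - N_{j+1}') \;+\; N_n' \;=\; N_l'.
\]
Therefore $F_{wt, \vec{N'}}$ transforms under $(M\cap K)^0$ by $\chi_{N_1'}\otimes\chi_{N_2'}\otimes\cdots\otimes\chi_{N_n'}$, i.e.\ it lies in $W_{\even, \vec{N'}} = \Ind_{(M\cap K)^0}^{K}\chi_{N_1'}\otimes\cdots\otimes\chi_{N_n'}$.

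The remaining assertions are exactly what Corollary \ref{cor: weight building function in general} provides with this choice of data: since each $f_{W_j}$ is a highest weight vector for the left $K$-action that generates the Cartan component of $W_j^{*}\otimes W_j$, the product $F_{wt, \vec{N'}}$ is a highest weight vector of the Cartan component of $\bigotimes_{j=1}^n (W_j^{*}\otimes W_j)^{\otimes m_j}$, and under the right $K$-action it generates an irreducible $K$-submodule of $W_{\even, \vec{N'}}$ isomorphic to this Cartan component, which is a minimal $K$-type of $W_{\even, \vec{N'}}$. There is no substantial obstacle in this argument; the only thing to verify is the character telescoping displayed above and that the assumptions of Corollary \ref{cor: weight building function in general} are met, and the non-triviality of $F_{wt, \vec{N'}}$ (needed to actually identify the minimal $K$-type) is certified a posteriori by the evaluation $F_{wt, \vec{N'}}(w) = 1$.
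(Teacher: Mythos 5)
Your proposal is correct and follows exactly the route the paper takes: the evaluation $F_{wt,\vec{N'}}(w)=1$ is pulled from Corollary \ref{cor: fundamental even in minimal K type}, and everything else is an application of Corollary \ref{cor: weight building function in general} with the exponents $m_j=\frac{N_j'-N_{j+1}'}{2}$, $m_n=\frac{N_n'}{2}$. The only difference is that you spell out the telescoping character computation and the non-vanishing certification that the paper leaves implicit, which is a harmless (indeed helpful) addition.
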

   \begin{proof}
      The fact that $F_{wt, \vec{N'}}(w) = 1$ follows directly from Corollary \ref{cor: fundamental even in minimal K type}. Thus, Corollary \ref{cor: weight building function minimal K type} follows directly from Corollary \ref{cor: weight building function in general}.
   \end{proof}

   Recall that $\chi$ and $\omega$ are characters of $\BR^\times$, $\chi_0$ and $\omega_0$ are the restrictions of the characters $\chi$ and $\omega$ on $\{\pm 1\}$. Our goal is to construct a $\BC$-valued function
in the minimal $K$-type $\tau_0$ of $\pi_{\textrm{sc}}$ (defined in \eqref{eq: induced space complex value}) satisfying the right $K\cap H$-equivariance as in \eqref{eq: right equivariant property}.
In the special case $\omega_0 = \chi_0 = \id$, we set $N_j' = l_j+1$ ($j=1,2,\cdots,n$). Then $\vec{N'} = (N_1',N_2', \cdots ,N_n')$ is a decreasing sequence of positive integers.
By Corollary \ref{cor: weight building function minimal K type}, the function $F_{wt, \vec{N'}}(x)$ constructed therein is exactly the right $K\cap H$-invariant vector in the minimal $K$-type $\tau_0$ of
$\pi_{\textrm{sc}}$.
To take care of the other cases, i.e. one of $\chi_0, \omega_0$ is non-trivial, we introduce two more functions.

   \begin{prop}\label{prop: det lives in fundamental repn}
           Let $u_j^*$ be as in \eqref{def: uj} and  the Weyl element $w$ be defined in \eqref{eq: def of w}. Define the following two functions on $K$:
           \begin{equation}\label{eq: det polynomial 1}
               F_{\lt}(x) := \det\left(
                                        \left(\begin{array}{c}
                                          u_1^* \\
                                          u_2^* \\
                                          \cdots \\
                                          u_n^* \\
                                        \end{array}\right)\cdot x \cdot \left(
                                                                   \begin{array}{c}
                                                                     \RI_n \\
                                                                     0_n \\
                                                                   \end{array}
                                                                 \right)
                                      \right)
           \end{equation}
           and
           \begin{equation}\label{eq: det polynomial 2}
               F_{\rt}(x) := i^n\cdot\det\left(
                                        \left(\begin{array}{c}
                                          u_1^* \\
                                          u_2^* \\
                                          \cdots \\
                                          u_n^* \\
                                        \end{array}\right)\cdot x \cdot \left(
                                                                   \begin{array}{c}
                                                                     0_n \\
                                                                     \RI_n \\
                                                                   \end{array}
                                                                 \right)
                                      \right)
           \end{equation}
           Then $F_{\lt}(w) = F_{\rt}(w)=1$. Under the left $K$-action, both $F_{\lt}(x)$ and $F_{\rt}(x)$ are highest weight vectors, while under the right $K$-action,
they both generate the minimal $K$-type of the induced representation
           $$W_{\textrm{odd}} := \Ind_{(M\cap K)^0}^K \chi_1\otimes\chi_1\otimes\cdots\otimes\chi_1.$$
       \end{prop}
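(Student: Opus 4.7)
The plan is to treat $F_\lt$ and $F_\rt$ as matrix coefficients of the fundamental $K$-module $W_n = \wedge^n V$ (the same irreducible representation appearing in Corollary \ref{cor: fundamental even in minimal K type} at $j=n$), and to read off all three claims from the $K$-module structure of $W_n$. First, the values at the Weyl element follow by direct calculation: since $w\cdot\begin{pmatrix}\RI_n \\ 0_n\end{pmatrix}$ collects the odd-indexed columns $e_1, e_3, \ldots, e_{2n-1}$ of $w$, while $w\cdot\begin{pmatrix}0_n \\ \RI_n\end{pmatrix}$ collects the even ones, the identities $u_j^*\cdot e_{2k-1} = \delta_{jk}$ and $u_j^*\cdot e_{2k} = -i\delta_{jk}$ from \eqref{def: uj} give $F_\lt(w) = \det(\RI_n) = 1$ and $F_\rt(w) = i^n\det(-i\RI_n) = (i\cdot(-i))^n = 1$.

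Next, I would rewrite
$$F_\lt(x) = \langle \tilde u_n^*,\, \wedge^n(x)(e_1\wedge\cdots\wedge e_n)\rangle, \qquad F_\rt(x) = i^n\langle \tilde u_n^*,\, \wedge^n(x)(e_{n+1}\wedge\cdots\wedge e_{2n})\rangle,$$
with $\tilde u_n^* := u_1^*\wedge\cdots\wedge u_n^* \in W_n^*$. An explicit block-diagonal computation shows that $u_j^*\cdot t = e^{i\theta_j} u_j^*$ for $t = k(\theta_1,\ldots,\theta_n) \in T' := (M\cap K)^0$, and multilinearity of the determinant propagates this to $F_\lt(tx) = \chi_1^{\otimes n}(t) F_\lt(x)$ and the analogous identity for $F_\rt$; this simultaneously places both functions inside $W_\odd = \Ind_{(M\cap K)^0}^K \chi_1^{\otimes n}$ and identifies $\tilde u_n^*$ as a $T'$-weight vector of $W_n^*$ with weight $(1,1,\ldots,1)$. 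Since the only way to write $(1,1,\ldots,1)$ as a sum of $n$ weights drawn from the $V$-weight system $\{\pm e_1,\ldots,\pm e_n\}$ is $+e_1 + \cdots + e_n$, this weight space of $W_n^* \simeq W_n$ (self-duality of the orthogonal module $\wedge^n V$) is one-dimensional and must be the highest-weight line. Standard matrix-coefficient theory for the irreducible $K = \RO_{2n}(\BR)$-module $W_n$ (\cite[Theorem 5.5.13]{GW09}) then shows that $F_\lt$ and $F_\rt$ are highest weight vectors under the left $K$-action on $C^\infty(K)$, each generating a copy of $W_n^*$.

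Applying Lemma \ref{lemma: useful dual lemma}, under the right $K$-action each of $F_\lt, F_\rt$ generates an irreducible $K$-submodule of $W_\odd$ isomorphic to $W_n^{**} \simeq W_n$. To identify this as the \emph{minimal} $K$-type of $W_\odd$, Frobenius reciprocity gives $\Hom_K(W_n, W_\odd) \simeq \Hom_{T'}(W_n|_{T'}, \chi_1^{\otimes n})$, whose dimension equals the multiplicity (one) of $(1,\ldots,1)$ in $W_n$; any other irreducible $K$-subrepresentation of $W_\odd$ must likewise contain this weight, hence have highest weight dominating $(1,\ldots,1)$, so $W_n$ is minimal. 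The main subtlety I anticipate is bookkeeping between left-versus-right actions and primal-versus-dual modules, together with an essential use of the irreducibility of $W_n$ over the \emph{disconnected} group $\RO_{2n}(\BR)$: over $\SO_{2n}(\BR)$ the module $\wedge^n V$ splits into two chirality summands, and the clean minimality argument above would otherwise require an additional outer-automorphism step to glue them.
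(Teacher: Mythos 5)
Your proposal is correct and follows essentially the same route as the paper's proof: realize $F_{\lt}$ and $F_{\rt}$ as matrix coefficients of the fundamental module $\wedge^n V$ with the highest weight vector $u_1^*\wedge\cdots\wedge u_n^*$, invoke Lemma \ref{lemma: useful dual lemma} to pass from the left action to the right action, and use self-duality plus the highest weight $(1,\dots,1)$ to identify the minimal $K$-type of $W_{\textrm{odd}}$. You supply more detail than the paper at the points it dismisses as ``straightforward'' (the explicit evaluation at $w$, the $T'$-weight computation, and the Frobenius-reciprocity minimality argument), and your closing remark about irreducibility of $\wedge^n V$ over the disconnected group $\RO_{2n}(\BR)$ correctly flags the one place where the connected-group argument would not suffice.
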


       \begin{proof}
           By a straightforward matrix computation, one can check that $F_{\lt}(w) = F_{\rt}(w)=1$ and that both $F_{\lt}(x)$ and $F_{\rt}(x)$ live in the induced representation $W_{\textrm{odd}}$.
It remains to show that $F_{\lt}(x)$ generates the minimal $K$-type of $W_{\textrm{odd}}$. The proof for $F_{\rt}(x)$ is exactly the same.

           Note that $u_1^*\wedge u_2^* \wedge \cdots \wedge u_n^*$ is a highest weight vector in the fundamental representation $\bigwedge^n V^*$. Under the left $K$-action, $F_{\lt}(x)$ is a highest vector and
generate an irreducible submodule of $C^\infty(K')$ isomorphic to $\bigwedge^n V^*$. By Lemma \ref{lemma: useful dual lemma}, under the right $K$-action, $F_{\lt}(x)$ generates an irreducible $K$-submodule of $C^\infty(K')$ isomorphic to $\bigwedge^n V$. Since the representation $\bigwedge^n V$ has highest weight $[1,1, \cdots ,1,1]$, it is isomorphic to the minimal $K$-type of $W_{\textrm{odd}}$. We are done.
       \end{proof}

       \begin{defn}\label{def: parity compatible}
           Let $\vec{N}:=(N_1,N_2,\cdots,N_n)$ be a sequence of positive integers with the same parity in the decreasing order and $\omega$ be a character of $\BZ/2\BZ$. We say $\vec{N}$ is parity-compatible with $\omega_0$ if when $N_1$ is even, $\omega_0$ is trivial; and when $N_1$ is odd, $\omega_0$ is the sign character.
       \end{defn}
    For any decreasing sequence $\vec{N'}:=(N_1',N_2',\cdots,N_n')$ of non-negative even integers, recall the function $F_{wt, \vec{N'}}(x)$ as constructed in Corollary \ref{cor: weight building function minimal K type}
and the functions $F_{\lt}(x)$, $F_{\rt}(x)$ as constructed in Proposition \ref{prop: det lives in fundamental repn}. Now we are ready to construct the right $(K\cap H)$-equivariant cohomological test vector in the induced representation $\pi_{\textrm{sc}}$ defined in \eqref{eq: induced space complex value}, based on these three types of functions.
    \begin{thm}\label{thm: phi0 lives in minimal K type}
        Let $\vec{N}:=(N_1,N_2,\cdots,N_n)$ be a sequence of positive integers with the same parity in the decreasing order, and $\omega_0$ be a character of $\BZ/2\BZ$ that is parity-compatible
 with $\vec{N}$, as in Definition \ref{def: parity compatible}. For any character $\chi_0$ of $\BZ/2\BZ$, define a function $\varphi_{\vec{N},\chi_0\otimes\chi_0\omega_0}(x)\in C^\infty(K)$ as follows:
        \begin{enumerate}
          \item If $N_1$ is even and $\chi_0$ is trivial, set
                 $\varphi_{\vec{N},\chi_0\otimes\chi_0\omega_0}(x) := F_{wt, \vec{N'}}(x),$
                 where $N_j' = N_j$, $j=1,2,\cdots,n$.
          \item If $N_1$ is even and $\chi_0$ is the sign character, set
                 $$ \varphi_{\vec{N},\chi_0\otimes\chi_0\omega_0}(x) := F_{wt, \vec{N'}}(x)F_{\lt}(x)F_{\rt}(x),$$
                where $N_j' = N_j-2$, $j=1,2,\cdots,n$.
          \item If $N_1$ is odd and $\chi_0$ is trivial, set
                  $$ \varphi_{\vec{N},\chi_0\otimes\chi_0\omega_0}(x) := F_{wt, \vec{N'}}(x)F_{\rt}(x),$$
                where $N_j' = N_j-1$, $j=1,2,\cdots,n$.
          \item If $N_1$ is odd and $\chi_0$ is the sign character, set
                  $$ \varphi_{\vec{N},\chi_0\otimes\chi_0\omega_0}(x) := F_{wt, \vec{N'}}(x)F_{\lt}(x),$$
                where $N_j' = N_j-1$, $j=1,2,\cdots,n$.
        \end{enumerate}
       Then $\varphi_{\vec{N},\chi_0\otimes\chi_0\omega_0}(x)$ lives in the minimal $K$-type of
       $$\Ind_{(K\cap M)^0}^K \chi_{N_1}\otimes\chi_{N_2}\otimes\cdots\otimes\chi_{N_2}$$
       and satisfies the right $K\cap H$-equivariant property \eqref{eq: right equivariant property}. Moreover, $\varphi_{\vec{N},\chi_0\otimes\chi_0\omega_0}(w) = 1$, where $w$ is the Weyl element defined in \eqref{eq: def of w}.
    \end{thm}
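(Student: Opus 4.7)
The plan is to verify each of the four required properties case by case: (i) left $(M\cap K)^0$-equivariance (so the function lies in $\pi_{\textrm{sc}}$), (ii) membership in the minimal $K$-type, (iii) the right $(K\cap H)$-equivariance of \eqref{eq: right equivariant property}, and (iv) $\varphi_{\vec N,\chi_0\otimes\chi_0\omega_0}(w)=1$. All four follow by combining what has already been established: Corollary \ref{cor: weight building function minimal K type} for $F_{wt,\vec{N'}}$ together with Proposition \ref{prop: det lives in fundamental repn} for $F_{\lt}$ and $F_{\rt}$, plus Corollary \ref{cor: weight building function in general} to recognize the product as a highest-weight vector of the correct Cartan component.

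For (i) and (iv): each elementary factor $f_{W_j}$, $F_{\lt}$, $F_{\rt}$ takes the value $1$ at $w$ by Corollary \ref{cor: fundamental even in minimal K type} and Proposition \ref{prop: det lives in fundamental repn}, so in every case $\varphi_{\vec N,\chi_0\otimes\chi_0\omega_0}(w)=1$. For the left weight, $F_{wt,\vec{N'}}$ transforms by $\chi_{N'_1}\otimes\cdots\otimes\chi_{N'_n}$, while each of $F_{\lt}, F_{\rt}$ contributes $\chi_1\otimes\cdots\otimes\chi_1$. A case-by-case check of the four choices of $(N_j',\text{factors})$ shows that the weight lands exactly on $\chi_{N_1}\otimes\cdots\otimes\chi_{N_n}$: in Case (1) no shift; in Case (2) the two determinant factors shift each $N_j'=N_j-2$ up by $2$; in Cases (3) and (4) a single determinant factor shifts $N_j'=N_j-1$ up by $1$.

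For (iii), the central observation is the transformation law of the determinant factors. For $k_1,k_2\in\RO_n(\BR)$,
\begin{equation*}
F_{\lt}\!\left(x\cdot\mtrtwo{k_1}{}{}{k_2}\right)=\det\!\left(U x\mtrtwo{k_1}{}{}{k_2}\mtrtwo{\RI_n}{}{}{0_n}\right)=\det(k_1)\cdot F_{\lt}(x),
\end{equation*}
and similarly $F_{\rt}(x\cdot\mathrm{diag}(k_1,k_2))=\det(k_2)F_{\rt}(x)$. Since $\det k_j=\pm 1$, this is the same as $\sgn(\det k_j)$. Meanwhile $F_{wt,\vec{N'}}$ is right $(K\cap H)$-invariant by Corollary \ref{cor: weight building function minimal K type}. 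One then checks, using the parity compatibility in Definition \ref{def: parity compatible} (so $\omega_0=\id$ when $N_1$ even, $\omega_0=\sgn$ when $N_1$ odd), that in each of the four cases the product of determinant factors reproduces the character $\chi_0(\det k_1)\cdot(\chi_0\omega_0)(\det k_2)$ required by \eqref{eq: right equivariant property}. For instance, in Case (3) ($N_1$ odd, $\chi_0=\id$) we have $\chi_0\omega_0=\sgn$, matching the single factor $F_{\rt}$; in Case (4) ($\chi_0=\sgn$, $\chi_0\omega_0=\id$) the single $F_{\lt}$ matches.

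Finally for (ii), Corollary \ref{cor: weight building function in general} applies directly to the product, with $W'_j=\wedge^j V$ for the $f_{W_j}$ factors and $W=\wedge^n V$ or $V$ (as appropriate) for $F_{\lt}, F_{\rt}$: each factor is a highest-weight vector under the left $K$-action, so the product is a highest-weight vector of the Cartan component of the tensor product of the corresponding $K$-modules. Its highest weight is the sum of the individual highest weights, which by the bookkeeping in (i) equals $(N_1,\dots,N_n)$. Since $\varphi_{\vec N,\chi_0\otimes\chi_0\omega_0}(w)=1\ne 0$, the function is nonzero, and Corollary \ref{cor: weight building function in general} then guarantees that it generates (under the right $K$-action) the minimal $K$-type of the induced representation $\Ind_{(M\cap K)^0}^K\chi_{N_1}\otimes\cdots\otimes\chi_{N_n}$. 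The main obstacle is really only bookkeeping: organizing the four parity/character cases so that the shift in $\vec{N'}$ and the choice of determinant factors combine correctly to give both the prescribed left weight and the prescribed right equivariance simultaneously; once the case table is written down, each case is immediate from the two earlier results.
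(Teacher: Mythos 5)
Your proposal is correct and takes essentially the same route as the paper: the value at $w$ and the left $(M\cap K)^0$-weight come from Corollary \ref{cor: weight building function minimal K type} and Proposition \ref{prop: det lives in fundamental repn}, the right $K\cap H$-equivariance reduces to the transformation laws $F_{\lt}(x\,\mathrm{diag}(k_1,k_2))=\det(k_1)F_{\lt}(x)$ and $F_{\rt}(x\,\mathrm{diag}(k_1,k_2))=\det(k_2)F_{\rt}(x)$ (a computation the paper merely asserts ``can be checked in an ad-hoc way''), and the minimal $K$-type claim follows from the product being a left highest weight vector together with duality. The only imprecision is your citation of Corollary \ref{cor: weight building function in general} for the full product: that corollary literally covers only products of the functions $f_{W_j}$ of the form \eqref{eq: fw}, not $F_{\lt}$ and $F_{\rt}$; the paper instead argues directly that the product of left highest weight vectors is a highest weight vector of weight $(N_1,\dots,N_n)$, invokes the self-duality of the irreducible $\RO_{2n}(\R)$-module with that highest weight, and applies Lemma \ref{lemma: useful dual lemma} to pass to the right action --- your argument should be phrased that way rather than through the corollary.
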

    \begin{proof}
       The fact that $\varphi_{\vec{N},\chi_0\otimes\chi_0\omega_0}(x)$ lives in
       $$\Ind_{(K\cap M)^0}^K \chi_{N_1}\otimes\chi_{N_2}\otimes\cdots\otimes\chi_{N_2}$$ and $\varphi_{\vec{N},\chi_0\otimes\chi_0\omega_0}(w) = 1$ follows directly from Corollary \ref{cor: weight building function minimal K type} and Proposition \ref{prop: det lives in fundamental repn}. The right $(K\cap H)$-equivariance can be checked in an ad-hoc way by direct matrix computations, since $F_{wt, \vec{N'}}(x)$ is right $K\cap H$-invariant. Here we only prove that $\varphi_{\vec{N},\chi_0\otimes\chi_0\omega_0}(x)$ generates the minimal $K$-type of $$\Ind_{(K\cap M)^0}^K \chi_{N_1}\otimes\chi_{N_2}\otimes\cdots\otimes\chi_{N_2}.$$

       Note that $F_{wt, \vec{N'}}(x)$ constructed in Corollary \ref{cor: weight building function minimal K type} and the functions $F_{\lt}(x)$, $F_{\rt}(x)$ constructed in Proposition \ref{prop: det lives in fundamental repn} are all highest weight vectors under the left $K$-action. Under the left $K$-action, the function $\varphi_{\vec{N},\chi_0\otimes\chi_0\omega_0}(x)$ is also a highest weight vector and generates an irreducible $K$-submodule of $C^\infty(K)$ with highest weight $(N_1, N_2, \cdots, N_n).$ The irreducible $K$-module with highest weight $(N_1, N_2, \cdots, N_n)$ is self-dual. By Lemma \ref{lemma: useful dual lemma}, under the right $K$-action, $\varphi_{\vec{N},\chi_0\otimes\chi_0\omega_0}(x)$ generates an irreducible $K$-submodule with highest weight $(N_1, N_2, \cdots, N_n)$, which is exactly the minimal $K$-type of the induced representation $$\Ind_{(K\cap M)^0}^K \chi_{N_1}\otimes\chi_{N_2}\otimes\cdots\otimes\chi_{N_n}.$$
    \end{proof}
    It is clear that once we set $N_j := l_j+1$, the function $\varphi_{\vec{N},\chi_0\otimes\chi_0\omega_0}(x)$ constructed in the above theorem must live in the minimal $K$-type of the induced representation
$\pi_{\textrm{sc}}$
as defined in \eqref{eq: induced space complex value}. Hence, combining with the reduction steps in Subsection \ref{subsection: reduction to polynomial repn}, we obtain the following direct corollary of Theorem \ref{thm: phi0 lives in minimal K type}.
    \begin{cor}\label{cor: construction of cohomological vector}
        Let $\chi_0,\omega_0$ be the restriction of $\chi,\omega$ on $\{\pm 1\}$ as in \eqref{eq: right equivariant property}. Let $N_j := l_j+1$ and $\varphi_{\vec{N},\chi_0\otimes\chi_0\omega_0}$ be the function constructed in Theorem \ref{thm: phi0 lives in minimal K type}. Define $f_{1,2,\cdots,n} = \varphi_{\vec{N},\chi_0\otimes\chi_0\omega_0}$ and $f_{\eta_1,\eta_2,\cdots,\eta_n}(k)$ by the rules \eqref{Eq: component inv}.
Then, as defined in \eqref{Eq: basis expansion of phi}, the function
        $$
        f(k) = \sum f_{\eta_1,\eta_2,\cdots,\eta_n}(k) v_{\eta_1}\otimes v_{\eta_2}\otimes\cdots\otimes v_{\eta_n}
        $$
    with the summation being taken over all possible choices $\eta_j\in \{\pm j\}$,
    belongs to the minimal $K$-type $\tau$ of $\pi$. Moreover, $f(k)$ satisfies the desired equivariant property \eqref{eq: right equivariant property}.
    \end{cor}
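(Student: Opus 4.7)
The plan is to deduce the corollary as a direct translation of Theorem \ref{thm: phi0 lives in minimal K type} into the vector-valued picture via the $K$-module isomorphism of Lemma \ref{Lemma: compact induction lemma}. First I would verify that the input data feed correctly into Theorem \ref{thm: phi0 lives in minimal K type}: with $N_j := l_j+1$ the sequence $\vec{N}$ is strictly decreasing (since the $l_i$ are so by \eqref{eq: l_i eq}) and all $N_j$ share a common parity (as noted just after \eqref{eq: l_i eq}); moreover the definition \eqref{eq: omega char} of $\omega$ combined with the comparison of parities shows that $\vec{N}$ is parity-compatible with $\omega_0$ in the sense of Definition \ref{def: parity compatible}. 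Thus Theorem \ref{thm: phi0 lives in minimal K type} produces the scalar function $\varphi := \varphi_{\vec{N},\chi_0\otimes\chi_0\omega_0}\in C^\infty(K)$, which lies in the minimal $K$-type of $\pi_{\mathrm{sc}} = \Ind_{(M\cap K)^0}^K \chi_{l_1+1}\otimes\cdots\otimes\chi_{l_n+1}$, satisfies $\varphi(w)=1$, and is right $(K\cap H)$-equivariant with character $\chi_0(\det k_1)(\chi_0\omega_0)(\det k_2)$.

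Next I would lift $\varphi$ to the vector-valued function $f$. Setting $f_{1,2,\ldots,n}:=\varphi$, the rule \eqref{Eq: component inv} forces
\[
f_{\eta_1,\ldots,\eta_n}(k) \;=\; \varphi\bigl(c(\sgn(\eta_1),\ldots,\sgn(\eta_n))\,k\bigr),
\]
so the only remaining condition for $f\in V_{\pi_0}$ is the left torus equivariance \eqref{Eq: left eigen}, which is immediate since $\varphi$ transforms under $(M\cap K)^0$ by $\chi_{l_1+1}\otimes\cdots\otimes\chi_{l_n+1}$ and the elements $c(\epsilon)$ normalize the torus. Under the $K$-module isomorphism $f\mapsto f_{1,2,\ldots,n}$ of Lemma \ref{Lemma: compact induction lemma}, the minimal $K$-type of $\pi_{\mathrm{sc}}$ corresponds to that of $\pi_0$, which in turn equals the minimal $K$-type $\tau$ of $\pi$, as recalled in Subsection \ref{subsection: reduction to polynomial repn}. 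Hence $f$ belongs to $\tau\subset V_\pi$.

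Finally I would check the right $(K\cap H)$-equivariance \eqref{eq: right equivariant property} for $f$ componentwise. Since $c(\epsilon)$ acts on the left and commutes with right multiplication by any $k' = \diag(k_1,k_2)\in K\cap H$, for each multi-index $(\eta_1,\ldots,\eta_n)$ one has
\[
f_{\eta_1,\ldots,\eta_n}(kk') \;=\; \varphi(c(\epsilon)kk') \;=\; \chi_0(\det k_1)(\chi_0\omega_0)(\det k_2)\,\varphi(c(\epsilon)k) \;=\; \chi_0(\det k_1)(\chi_0\omega_0)(\det k_2)\,f_{\eta_1,\ldots,\eta_n}(k),
\]
and summing against the basis vectors of $\tau_1\otimes\cdots\otimes\tau_n$ yields \eqref{eq: right equivariant property}. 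All the real mathematical content has already been absorbed into Theorem \ref{thm: phi0 lives in minimal K type}, whose proof performs the invariant-theoretic construction of $\varphi$; the corollary is a bookkeeping step that packages $\varphi$ back into $V_\pi$ via Lemma \ref{Lemma: compact induction lemma}, so I expect no serious obstacle.
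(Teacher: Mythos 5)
Your proposal is correct and follows exactly the route the paper intends: the paper offers no separate proof, simply declaring the corollary a direct consequence of Theorem \ref{thm: phi0 lives in minimal K type} (with $N_j=l_j+1$) combined with the reduction of Subsection \ref{subsection: reduction to polynomial repn} via Lemma \ref{Lemma: compact induction lemma}. Your verification of parity-compatibility, the left-equivariance of the lifted components, and the componentwise right $(K\cap H)$-equivariance correctly fills in the bookkeeping the paper leaves implicit.
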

    \begin{rk}
        By the above construction, $f(k)$ depends on $\chi$ and $\omega$.
    \end{rk}


\section{Non-vanishing of Archimedean Local Integrals}\label{sec-NV-ALI}


With the explicit construction of the cohomological vector (see Subsection \ref{Subsection: construction of equivariant Cohomological vector}), we can analyze the local integrals $Z(v,s,\chi)$ (defined in \eqref{Eq: Def of Local Integral}) and $\Lambda_{s,\chi}(v)$ (defined in \eqref{eq: new def H inv linear functional}) in detail. We retain all notation in Subsection  \ref{Subsection: construction of equivariant Cohomological vector}.

\begin{thm}\label{thm: cohomological test vector}
    Let $f(k)$ be the cohomological vector constructed in Corollary \ref{cor: construction of cohomological vector} and $w$ be the Weyl element defined in \eqref{eq: def of w}. Then
    \begin{equation*}
       f(w) = \otimes_{j=1}^n (v_j+\chi(-1)v_{-j}).
    \end{equation*}
    As a consequence, as a meromorphic function of $s\in \BC$, $\frac{1}{L(s,\pi\otimes\chi)}\Lambda_{s,\chi}(f) = 1.$ Hence $\Lambda_{s,\chi}$ defines a non-zero element in
    \begin{equation*}
       \Hom_{H}(\pi, \abs{\det}^{-s+\frac{1}{2}}\chi^{-1}(\det)\otimes\abs{\det}^{s-\frac{1}{2}}(\chi\omega)(\det)),
    \end{equation*}
    whenever $s$ is not a pole of $L(s, \pi\otimes\chi).$
\end{thm}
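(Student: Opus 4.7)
The plan is to first pin down $f(w)$ coordinate by coordinate using the construction of $f$ from $\varphi := \varphi_{\vec{N},\chi_0\otimes\chi_0\omega_0}$, then feed the resulting explicit vector into the formula \eqref{eq: 020} for $\Lambda_{s,\chi}$.

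Recall that by Corollary \ref{cor: construction of cohomological vector}, $f(w)=\sum f_{\eta_1,\ldots,\eta_n}(w)\, v_{\eta_1}\otimes\cdots\otimes v_{\eta_n}$ with $f_{1,2,\ldots,n}=\varphi$ and the other components generated from it by the component-group rule \eqref{Eq: component inv}. Writing $\eta_j=\epsilon_j j$ with $\epsilon_j\in\{\pm1\}$ and applying \eqref{Eq: component inv} with the element $c(\epsilon_1,\ldots,\epsilon_n)$ (which is its own inverse), one obtains
\begin{equation*}
  f_{\epsilon_1\cdot 1,\,\epsilon_2\cdot 2,\,\ldots,\,\epsilon_n\cdot n}(w)\;=\;\varphi\bigl(c(\epsilon_1,\ldots,\epsilon_n)\,w\bigr).
\end{equation*}
The key computation is then to push $c(\epsilon_1,\ldots,\epsilon_n)$ through $w$ onto the right. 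A direct matrix check against the definition \eqref{eq: def of w} of $w$ and \eqref{Eq: parametrize M intersect K 2} of $c$ gives the conjugation identity
\begin{equation*}
  w^{-1}\,c(\epsilon_1,\ldots,\epsilon_n)\,w \;=\; \begin{pmatrix}\mathrm{diag}(\epsilon_1,\ldots,\epsilon_n) & 0 \\ 0 & \RI_n\end{pmatrix}\in K\cap H.
\end{equation*}
This is the one step that really needs verification; everything else is bookkeeping. Once this is in hand, the right $(K\cap H)$-equivariance \eqref{eq: right equivariant property} of $\varphi$ together with $\varphi(w)=1$ (Theorem \ref{thm: phi0 lives in minimal K type}) yields
\begin{equation*}
 \varphi\bigl(c(\epsilon_1,\ldots,\epsilon_n)w\bigr)=\chi_0\Bigl(\textstyle\prod_j\epsilon_j\Bigr)\cdot(\chi_0\omega_0)(1)\cdot \varphi(w)=\prod_{j=1}^n\chi(\epsilon_j),
\end{equation*}
since $\chi_0$ is the restriction of $\chi$ to $\{\pm1\}$ and $\chi_0(1)=1$. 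Substituting back,
\begin{equation*}
 f(w)=\sum_{\epsilon\in\{\pm1\}^n}\Bigl(\prod_{j=1}^n\chi(\epsilon_j)\Bigr)\,v_{\epsilon_1}\otimes v_{2\epsilon_2}\otimes\cdots\otimes v_{n\epsilon_n}=\bigotimes_{j=1}^n\bigl(v_j+\chi(-1)v_{-j}\bigr),
\end{equation*}
which is precisely the asserted identity.

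With $f(w)$ in hand, the second assertion is immediate. Because $f$ satisfies the right $(K\cap H)$-equivariance \eqref{eq: right equivariant property}, the simplified formula \eqref{eq: 020} applies and gives
\begin{equation*}
 \Lambda_{s,\chi}(f)=\Bigl\langle \bigotimes_{j=1}^n\lambda_{s,j},\,f(w)\Bigr\rangle=\prod_{j=1}^n\langle\lambda_{s,j},\,v_j+\chi(-1)v_{-j}\rangle=\prod_{j=1}^nL(s,\sigma_j\otimes\chi),
\end{equation*}
where the last equality is the normalization \eqref{Eq: Precise L(0.5, D(k+1))} from Corollary \ref{Cor: GL(2, R) Non-vanishing}. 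Since $L(s,\pi\otimes\chi)=\prod_{j=1}^n L(s,\sigma_j\otimes\chi)$ for the parabolic induction \eqref{Eq: pi parabolic induction parameter}, dividing gives the meromorphic identity $L(s,\pi\otimes\chi)^{-1}\Lambda_{s,\chi}(f)=1$.

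Finally, the statement that $\Lambda_{s,\chi}$ belongs to the claimed Hom space (when $s$ is not a pole) is already Proposition \ref{Cor: analytic property of Lambda}; non-vanishing of $\Lambda_{s,\chi}$ as a member of that Hom space follows from the just-established identity $\Lambda_{s,\chi}(f)=L(s,\pi\otimes\chi)\neq 0$. The only genuinely technical step in the whole proof is the conjugation $w^{-1}c\,w$ lying in $K\cap H$ with the explicit block form above; once that is verified, the right-equivariance of $\varphi$ and the normalization of the $\GL_2(\R)$ Hecke integrals from Section \ref{Subsection:The GL(2,R) Case} do all the remaining work.
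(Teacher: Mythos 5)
Your proof is correct and follows essentially the same route as the paper: reduce to $f_{1,\dots,n}(w)=\varphi(w)=1$, propagate to the other components via \eqref{Eq: component inv}, and then evaluate $\Lambda_{s,\chi}(f)$ through \eqref{eq: 020} and the $\GL_2(\R)$ normalization of Corollary \ref{Cor: GL(2, R) Non-vanishing}. The only difference is that you make explicit the conjugation identity $w^{-1}c(\epsilon_1,\dots,\epsilon_n)w=\mathrm{diag}(\epsilon_1,\dots,\epsilon_n,1,\dots,1)\in K\cap H$ justifying the equality $f_{1,\dots,n}(c(\sgn\eta_1,\dots,\sgn\eta_n)w)=\prod_j\chi(\sgn\eta_j)$, a step the paper asserts without detail; your verification of it is correct.
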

\begin{proof}
     By Theorem \ref{thm: phi0 lives in minimal K type}, we have that $\varphi_{\vec{N},\chi_0\otimes\chi_0\omega_0}(w) = 1$, and hence
     \begin{equation*}
        f_{1,2,\cdots,n}(w) = 1.
     \end{equation*}
     Recall the matrix $c(\epsilon_1,\cdots\epsilon_n)$ from \eqref{Eq: parametrize M intersect K 2}. By \eqref{Eq: component inv},
     \begin{eqnarray*}
         f_{\eta_1,\eta_2,\cdots,\eta_n}(w) &=& f_{1,2,\cdots,n}(c(\sgn(\eta_1),\sgn(\eta_2),\cdots,\sgn(\eta_n))w)\\
         & =& \prod_{j=1}^n \chi(\sgn( \eta_j)).
     \end{eqnarray*}
     It follows that by \eqref{Eq: basis expansion of phi}
     \begin{equation*}
        \begin{aligned}
       f(w) &= \sum_{\text{all }\eta_j\in\{\pm j\}} f_{\eta_1,\eta_2,\cdots,\eta_n}(w) v_{\eta_1}\otimes v_{\eta_2}\otimes\cdots\otimes v_{\eta_n}\\
       &= \sum_{\text{all }\eta_j\in\{\pm j\}}\prod_{j=1}^n \chi(\sgn(\eta_j)) v_{\eta_1}\otimes v_{\eta_2}\otimes\cdots\otimes v_{\eta_n}\\
       &= \otimes_{j=1}^n (v_j+\chi(-1)v_{-j}).
       \end{aligned}
    \end{equation*}
     By Corollary \ref{cor: construction of cohomological vector}, $f(k)$ satisfies the desired equivariant property \eqref{eq: right equivariant property}. As a direct consequence of \eqref{eq: 020} and Corollary \ref{Cor: GL(2, R) Non-vanishing}, the following holds for $s\in \BC$ with sufficiently large real parts:
     \begin{equation*}
         \begin{aligned}
         \Lambda_{s,\chi}(f) &= \langle \bigotimes_{i=1}^n \lambda_i, f(w) \rangle\\
                             &= \langle \bigotimes_{i=1}^n \lambda_i, \otimes_{j=1}^n (v_j+\chi(-1)v_{-j}) \rangle\\
                             &= \prod_{j=1}^m L(s,D_{l_j}\abs{\quad}^\frac{m}{2}\otimes\chi)\\
                             &= L(s,\pi\otimes\chi).
         \end{aligned}
     \end{equation*}
     Here the local $L$-function is obtained from the local Langlands correspondence for $\GL_{2n}(\BR)$, which we refer to \cite{L89} (also see \cite{Kn94} and \cite{J}). Thus, by meromorphic continuation, for all $s\in \BC,$ $\frac{1}{L(s,\pi\otimes\chi)}\Lambda_{s,\chi}(f) = 1.$ We are done.
\end{proof}
     It is finally clear that Theorem \ref{thm-main} holds. As a consequence, we can also show the non-vanishing of the archimedean Friedberg-Jacquet integral $Z(v,s,\chi)$.
\begin{cor}\label{cor: relation between two linear models}
     There exists a holomorphic function $G(s,\chi)$ such that
     \begin{equation*}
         Z(v,s,\chi) = e^{G(s,\chi)}\Lambda_{s,\chi}(v).
     \end{equation*}
     As a consequence, if $v=f(k)$ is the cohomological vector constructed in Corollary \ref{cor: construction of cohomological vector}, then whenever $s$ is not a pole of $L(s, \pi\otimes\chi)$, $Z(v,s,\chi)$ does not vanish.
\end{cor}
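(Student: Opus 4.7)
The plan is to combine the uniqueness of the twisted linear model, the analytic properties of both linear functionals, and the classical fact that a nowhere-vanishing entire function on $\BC$ is the exponential of an entire function.

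First I would note that both $v\mapsto Z(v,s,\chi)$ and $v\mapsto \Lambda_{s,\chi}(v)$ belong to $\Hom_H(\pi, \chi_{1,s}(\det)\otimes\chi_{2,s}(\det))$ whenever the twist is well-defined, where $\chi_{1,s},\chi_{2,s}$ are as in \eqref{def: chi1 and chi2}. By the uniqueness result invoked in \eqref{Eq: uniqueness of twisted linear model} (via \cite[Theorem B]{Ch-Sun}), this Hom space is at most one-dimensional for all but countably many values of $s$. Hence at each such generic $s$ there is a scalar $c(s,\chi)$, independent of $v$, with $Z(v,s,\chi)=c(s,\chi)\Lambda_{s,\chi}(v)$ for every $v\in V_\pi$.

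To pin down and analyze $c(s,\chi)$, I would plug in the cohomological test vector $f$ of Corollary \ref{cor: construction of cohomological vector}. By Theorem \ref{thm: cohomological test vector} we have $\Lambda_{s,\chi}(f)=L(s,\pi\otimes\chi)$, so at generic $s$
\begin{equation*}
   c(s,\chi)=\frac{Z(f,s,\chi)}{L(s,\pi\otimes\chi)}.
\end{equation*}
The right side is entire in $s$ because $Z(f,s,\chi)$ is a holomorphic multiple of $L(s,\pi\otimes\chi)$ (cited from \cite{A-G-J} in Subsection \ref{sec-SMLM}). This extends $c(s,\chi)$ to an entire function of $s$, and the proportionality $Z(v,s,\chi)=c(s,\chi)\Lambda_{s,\chi}(v)$ then holds for all $s$ by meromorphic continuation in $s$, for each fixed $v$.

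For the non-vanishing of $c(s,\chi)$, I would use the existence (from \cite[Theorem 3.1]{A-G-J}, quoted in Subsection \ref{sec-SMLM}) of a smooth vector $v_1\in V_\pi$ with $Z(v_1,s,\chi)=L(s,\pi\otimes\chi)$ as meromorphic functions of $s$. Substituting $v=v_1$ in the proportionality gives
\begin{equation*}
   c(s,\chi)^{-1}=\frac{\Lambda_{s,\chi}(v_1)}{L(s,\pi\otimes\chi)},
\end{equation*}
which is entire by Proposition \ref{Cor: analytic property of Lambda}. Thus $c(s,\chi)$ is entire and nowhere vanishing, so by the standard fact that any such function on the simply connected domain $\BC$ admits a holomorphic logarithm, one has $c(s,\chi)=e^{G(s,\chi)}$ for some entire $G(s,\chi)$. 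The non-vanishing conclusion is then immediate: for $v=f$, $Z(f,s,\chi)=e^{G(s,\chi)}L(s,\pi\otimes\chi)$, and the exponential factor never vanishes, so $Z(f,s,\chi)\ne 0$ at every $s$ which is not a pole of $L(s,\pi\otimes\chi)$.

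The conceptual step is rather short once the pieces are in place; the only substantive input that needs to be honestly cited is the existence of a uniform $v_1$ realizing the $L$-function in $Z$ (Archimedean GCD-type property from \cite{A-G-J}), combined with the new uniform test-vector statement $\Lambda_{s,\chi}(f)=L(s,\pi\otimes\chi)$ established earlier in this paper. The ``hard part'' is really the fact that both $Z/L$ and $\Lambda/L$ are entire, which together force the ratio to be an exponential rather than just a meromorphic function; this is what ultimately prevents cancellations between $Z$ and the $L$-factor at zeros of $L$ and yields the clean non-vanishing statement.
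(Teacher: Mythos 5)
Your proposal is correct and follows essentially the same route as the paper's proof: uniqueness of the twisted linear functional via \cite[Theorem B]{Ch-Sun} gives the proportionality constant $C(s,\chi)$, the test vector $f$ with $\Lambda_{s,\chi}(f)=L(s,\pi\otimes\chi)$ shows $C$ is holomorphic, the vector $v_0$ from \cite[Theorem 3.1]{A-G-J} with $Z(v_0,s,\chi)=L(s,\pi\otimes\chi)$ shows $C^{-1}$ is holomorphic, and a nowhere-vanishing holomorphic function is an exponential. No substantive differences.
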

\begin{proof}
     By \cite[Theorem B]{Ch-Sun}, for all but countably many $s\in \BC$ where $L(s,\pi\otimes\chi)$ does not have a pole, one has
     \begin{equation*}
        \text{dim Hom}_{H}(\pi, \abs{\det}^{-s+\frac{1}{2}}\chi^{-1}(\det)\otimes\abs{\det}^{s-\frac{1}{2}}(\chi\omega)(\det))\leq 1.
    \end{equation*}
    Since for such pair $(s,\chi)$, both $Z(v,s,\chi)$ and $\Lambda_{s,\chi}$ defines a non-zero element in
    \begin{equation*}
       \text{Hom}_{H}(\pi, \abs{\det}^{-s+\frac{1}{2}}\chi^{-1}(\det)\otimes\abs{\det}^{s-\frac{1}{2}}(\chi\omega)(\det)),
    \end{equation*}
    there exists a constant $C(s,\chi)$ (depending on $s$ and $\chi$) such that
    \begin{equation}\label{eq: 021}
       Z(v,s,\chi) = C(s,\chi)\Lambda_{s,\chi}(v).
    \end{equation}
    Since both $Z(v,s,\chi)$ and $\Lambda_{s,\chi}$ are meromorphic in $s$ and $\chi$, $C(s,\chi)$ is also meromorphic in
    $s,\chi$. Now we plug $v=f(k)$ (the cohomological vector constructed in Corollary \ref{cor: construction of cohomological vector}) in \eqref{eq: 021},
    and by Theorem \ref{thm: cohomological test vector}, we obtain
    \begin{equation*}
       C(s,\chi) = \frac{Z(f,s,\chi)}{\Lambda_{s,\chi}(f)} = \frac{Z(f,s,\chi)}{L(s,\pi\otimes\chi)}.
    \end{equation*}
    Thus, by \cite[Theorem 3.1]{A-G-J}, $C(s,\chi)$ must be holomorphic. Similarly, also by \cite[Theorem 3.1]{A-G-J}, we can choose a smooth vector $v_0$ such that $Z(v_0,s,\chi)=L(s,\pi\otimes\chi)$. Thus, using the same argument as above,
    \begin{equation*}
       \frac{1}{C(s,\chi)} = \frac{\Lambda_{s,\chi}(v_0)}{L(s,\pi\otimes\chi)}
    \end{equation*}
    must also be holomorphic, by Corollary \ref{Cor: analytic property of Lambda}. Hence $C(s,\chi)$ have no zeroes. This implies that there exists a holomorphic function $G(s,\chi)$ such that
         $C(s,\chi) = e^{G(s,\chi)}$.
\end{proof}

\bibliographystyle{elsarticle-num}

\end{document}